\documentclass[12pt,a4paper,leqno]{amsart}
\usepackage{latexsym,amssymb,amsthm,amscd}
\usepackage{eepic}
\usepackage{color}
\newcommand{\coRR}[1]{#1}
\newcommand{\coGR}[1]{#1}
\definecolor{orange}{rgb}{1,0.5,0}

\setlength{\evensidemargin}{6mm}
\setlength{\oddsidemargin}{6mm}
\setlength{\textheight}{230mm}
\setlength{\textwidth}{150mm}
\allowdisplaybreaks[4]
\newcommand\lsection{\@startsection {section}{1}{\z@}%
                                   {-3.5ex \@plus -1ex \@minus -.2ex}%
                                   {1.0ex \@plus.2ex}%
                                   {\normalfont\large\bfseries}}
\catcode`\@=12
\makeatletter
\newcommand*\verytiny{\@setfontsize\HUGE{2}{3}}
\makeatother
\makeatletter
\@addtoreset{equation}{section}

\makeatother
\newtheorem{thm}{Theorem}[section]
\newtheorem{cor}[thm]{Corollary}
\newtheorem{lem}[thm]{Lemma}
\newtheorem{prop}[thm]{Proposition}
\theoremstyle{definition}
\newtheorem{defn}[thm]{Definition}
\newtheorem{rem}[thm]{Remark}
\newtheorem{exam}[thm]{Example}

\newcommand{\NN}{\mathbb{N}}
\newcommand{\ZZ}{\mathbb{Z}}
\newcommand{\RR}{\mathbb{R}}

\newcommand{\QQ}{\mathbb{Q}}

\newcommand{\LL}{\mathbb{L}}

\newcommand{\comp}{\text{\scriptsize$\circ$}}
\newcommand{\pd}[2]{\frac{\partial{#1}}{\partial{#2}}}

\newcommand{\ord}{\mathop{\mathrm{ord}}\nolimits}
\newcommand{\lcm}{\mathop{\mathrm{lcm}}\nolimits}

\newcommand{\Var}{\mathop{\mathrm{Var}}\nolimits}

\newcommand{\medsum}{\mathop{\textrm{\scriptsize $\mathop\sum$}}}

\renewcommand{\pd}[2]{\frac{\partial#1}{\partial#2}}

\title[\tiny{Motivic invariant\coGR{s} of real polynomial functions and 
Newton polyhedron\coGR{s}}]
{Motivic invariant\coGR{s} of real polynomial functions 
and \coGR{their} Newton polyhedron\coGR{s}}
\author{Goulwen Fichou and Toshizumi Fukui}
\date\today
\thanks{The authors \coGR{have} been supported by Saitama University and the ANR project ANR-08-JCJC-0118-01.}
\address{IRMAR (UMR 6625), Universit\'e de Rennes 1, Campus de 
Beaulieu, 35042 Rennes Cedex, France} 
\address{Department of Mathematics, Faculty of Science, Saitama University,
Saitama, 338-8570, Japan}

\subjclass[2000]{14P20 (14B05 14P25 32S15)}
\keywords{Zeta functions, virtual Betti numbers, blow-Nash equivalence}

\begin{document}
\begin{abstract} 
We give an expression of the motivic zeta function for a real polynomial 
function in terms of the  Newton polyhedron of the function.
As a consequence, we show that the weights are determined by the motivic zeta
function for convenient weighted
homogeneous polynomials in three variables. 
We apply this result to \coGR{the} blow-Nash equivalence.
\end{abstract}
\maketitle

In Singularity Theory, one aim is to classify singular objects with
respect to a given equivalence relation. We focus on the singularities of
function germs, and more precisely on the case of weighted
homogeneous polynomial functions\coGR{;} that is\coGR{,} 
polynomial functions that satisf\coGR{y}
$$
f(t^{w_1}x_1,\ldots,t^{w_n}x_n)=t^df(x_1,\dots,x_n)\quad \textrm { for all real number\coGR{s} }t.
$$
We refer to $(w_1,\dots,w_n)\in \mathbb N^n$ as \coGR{weights} and
to  $d\in \mathbb N$ as the weighted degree of $f$
with respect to $(w_1,\dots,w_n)$. 
\coGR{W}e tackle the question of the invariance of the weights under
a given equivalence relation for germs \coGR{of such functions at the origin}.

Concerning complex analytic function germs, the first result in 
this direction is due to K.~Saito (\cite[Lemma 4.\coRR{3}]{Saito}). 
He proved that\coRR{, among the weighted homogeneous polynomials with 
isolated singularities,} 
the weights are local analytic invariants of the pair 
$(\mathbb C^n,f^{-1}(0))$ at the origin, for 
a weighted homogeneous polynomial $f$. 
Concerning the topological equivalence, E.~Yoshinaga and M.~Suzuki \cite{YoSu} in
1979 (and later T. Nishimura \cite{Ni} in 1986) proved the topological
invariance of the weights in the two \coGR{variable} 
case \coRR{with isolated singularities}, whereas O.~Saeki
\cite{Saeki} in 1988 \coRR{proved that the weights of a weighted homogeneous polynomial $f$ in $\mathbb C^3$ with an isolated singularity are local topological invariants of the pair $(\mathbb C^3,f^{-1}(0))$ at the origin}.

In this paper, we are concerned with the real counterpart of this
question, considering equivalence relations on real analytic function
germs. 
Since the topological equivalence is too weak in the real setting, the most relevant
equivalence relation to consider is the blow-analytic equivalence 
introduced by T.-C. Kuo (cf. \cite{Kuo}, 
and also \cite{FKK,FP} for surveys). Real analytic function germs
$f,g:(\mathbb R^n,0)\longrightarrow (\mathbb R,0)$ are said to be
blow-analytically equivalent in the sense of \cite{Kuo} if
there exist real modifications $\beta_f:M_f \longrightarrow \mathbb R^n$
and $\beta_g:M_g \longrightarrow \mathbb R^n$ and an analytic
isomorphism $\Phi :(M_f, \beta_f^{-1}(0)) \longrightarrow (M_g,
\beta_g^{-1}(0))$ which induces a homeomorphism $\phi:(\mathbb R^n,0)
\longrightarrow  (\mathbb R^n,0)$ such that $f=g \circ \phi$. 

For polynomial functions, or more generally Nash functions
(i.\,e.\,, real analytic functions with semi-algebraic graph\coGR{s}), a natural
counterpart exists, called blow-Nash equivalence, that takes into
account the algebraic nature of Nash functions. This equivalence
relation ha\coGR{s} been proved to have nice properties
(cf. \cite{FKK,Fichou,simple}).

The question of the invariance of the weights for weighted homogeneous
polynomial functions under blow-analytic equivalence already appeared
as a conjecture in \cite{Fukui} and as a question in \cite{KP}. A
positive answer has been given by O. M.~Abderrahmane \cite{Ould} in the two
\coGR{variable} case, using two invariants of blow-analytic equivalence:
Fukui invariants \cite{Fukui} and zeta functions \cite{KP}
constructed by S. Koike and A. Parusi\'nski 
using motivic integration \cite{DL} with the Euler characteristic of the homology of locally finite chains with closed supports as a measure. 

In the case of blow-Nash equivalence, the forthcoming Theorem \ref{w3} states that
convenient three \coGR{variable} weighted homogeneous polynomial functions
that are blow-Nash equivalent must have the same weights. To
prove this, we investigate the zeta function introduced in
\cite{Fichou} as an invariant of the blow-Nash equivalence, using as a measure the virtual Poincar\'e polynomial
\cite{MCP}. This polynomial is an additive and multiplicative
invariant for real algebraic sets, wh\coGR{ose} degree is equal to the
dimension of the variety. 

The main point \coGR{in} the proof of Theorem
\ref{w3} is to estimate the degrees of the coefficient\coGR{s} of the zeta
functions, which \coGR{are} defined using the virtual Poincar\'e polynomial, 
in terms of the Newton polyhedron of a given polynomial
function.
Zeta functions in motivic integration have 
already been computed in
terms of \coGR{the} Newton polyhedron \cite{DK,DL-modif,Guibert}, and our Theorem
\ref{Thm1} is a version that focus\coGR{ses} on \coGR{bounds on the} degrees. 
The main result in this paper, Theorem \ref{LinearBound}, gives a bound for the degree of the
coefficient\coGR{s} of the zeta function, which leads to the notion of
\textit{leading exponent} in section \ref{Le}. In the case of
convenient weighted homogeneous polynomial functions, this leading
exponent gives one more information on the
weights. 
Th\coGR{is  information} will be sufficient to \coGR{conclude} Theorem \ref{w3}.\\
{\bf \coGR{Notation}:}
Throughout the paper, 
$\RR$ denotes the set of real numbers, $\NN$ denotes the set of natural numbers with $0$ 
and we set 
$$
\RR_+=\{x\in\RR:x\ge0\},\qquad
\RR_+^*=\{x\in\RR:x>0\},\qquad
\NN^*=\NN\setminus\{0\}.
$$
\vskip 5mm

{\bf Acknowledgements:} 
The authors would like to thank the anonymous referee for 
\coGR{their useful} comments in order to improve the paper. 

\section{Motivic measure for arc space}
In this section we recall briefly how we can measure \coGR{certain} 
subsets of arc spaces in the context of real geometry, using
the theory of motivic integration as developed by J.~Denef
and F.~Loeser \cite{DL}. 
The real theory has already been developed in \cite{KP,Fichou}.

The measure takes its value in the Grothendieck ring $K_0(\Var_\RR)$ of real algebraic
varieties \cite{MCP}. It is defined as the free abelian group $K_0(\Var_\RR)$ generated by isomorphism
classes $[X]$ of real algebraic varieties modulo the subgroup generated by the
relation $[X]=[Y]+[X\setminus Y]$ for $Y\subset X$ a closed
subvariety of $X$. 
The multiplicative structure comes from the Cartesian product of varieties.

\subsection{Motivic zeta functions}\label{arc-space}

Let $M$ be a nonsingular real algebraic variety and $S$ a subset of $M$.
Consider the space of formal
 arcs with origin in $S$
$$
\mathcal L(M,S):=\{
\alpha:(\RR,0)\to (M,S):\
\textrm{formal}
\}.
$$
We \coGR{write} $\mathcal L(M,x)$ when $S=\{x\}$ is reduced to a point, and
for $m\in \NN$ denote by $L_m(M,x)$ the set of $m$-jets of elements of $\mathcal
L(M,x)$.

Let $p_m:\mathcal{L}(\RR^n,0)\to L_m(\RR^n,0)$ denote the map defined by 
taking $m$-jets. 
For a subset $\mathcal A$ of $\mathcal L(\RR^n,0)$ \coGR{which  is} 
constructible in the sense of \cite{DL2}, 
then, for each $m \in \mathbb N$, $p_m(\mathcal A)$ is Zariski constructible 
and therefore admits a measure in $K_0(\Var_\RR)$. Moreover the limit
$$
\lim_{m\to\infty}\frac{[p_m(\mathcal A)]}{\LL^{mn}}
$$
ha\coGR{s} a \coGR{meaning} in the
localised Grothendieck ring $K_0(\Var_\RR)[[\LL^{-1}]]$, 
where $[p_m(\mathcal A)]$ is the measure of $p_m(\mathcal A)$ in $K_0(\Var_\RR)$
and $\LL$ the measure of the real affine line. We define the measure of $\mathcal A$ to be this limit.

A typical example of such a constructible
subset of $\mathcal L(\RR^n,0)$ is given by the preimage under a truncation map $p_m$ of a Zariski constructible
subset of $L_m(M,x)$ (cf. \cite{DL}).
The subsets of the arc space we
consider in this paper are of this type.

The particular case of spaces of arcs defined by arcs with a specified order
will play a crucial role in this paper.
\begin{exam}\label{ex1}
For $a=(a_1\dots,a_n)\in\NN^n$, we consider the set
$\mathcal{L}_a\subset \mathcal L(\RR^n,0)$ of arcs $\alpha=(\alpha_1,\ldots,\alpha_n)$ in $\mathbb R^n$ whose $i$-th component $\alpha_i$
vanishes if $a_i=0$ or $\alpha_i(t)=ct^{a_i}+\ldots$, where $c\neq 0$, is of order $a_i$ otherwise. Namely
$$
\mathcal{L}_a=\{\alpha\in\mathcal L(\RR^n,0):
\ord \alpha_i=a_i\textrm{ if } i\in I(a),\ 
\alpha_i= 0\textrm{ if } i\not\in I(a)\}
$$
where $I(a)=\{i:a_i>0\}$. 
If $m$ is greater than the maximal value of $a_i$, for $i=1,\ldots,n$,
then $p_m(\mathcal L_a)$ is isomorphic to
$\RR^{m|I(a)|-\medsum_{i}a_i}\times (\RR^*)^{|I(a)|}$ where $|I(a)|$
denote{\coGR{s}} the cardinal of $I(a)$. Then
$$
[p_m(\mathcal L_a)]=
\LL^{m|I(a)|-\medsum_{i}a_i}(\LL-1)^{|I(a)|}
=(\LL-1)^{|I(a)|}\LL^{m|I(a)|-s(a)}
$$ 
where 
$s(a)=\medsum_{i=1}^na_i$, and therefore 
\begin{align*}
[\mathcal L_a]=
\lim_{m\to\infty}
\frac{[p_m(\mathcal L_a)]}{\LL^{mn}}=&
\begin{cases}
(\LL-1)^n\LL^{-s(a)}&\textrm{if }\ |I(a)|=n,\\
0&\textrm{if }\ |I(a)|<n.
\end{cases}
\end{align*}
\end{exam}

In other words, truncated arcs with some components equal to zero, can be seen as the image under truncation of arcs with higher orders.
We will use this remark in order to compute in
section \ref{poly} the arc spaces associated with a given real polynomial
function germ.

Let $f:(\RR^n,0)\to (\RR,0)$ be a polynomial
function germ, \coGR{and f}or $k\in \mathbb N^*$, we define the arc space $\mathcal
A_k(f)\subset \mathcal L(\RR^n,0)$ by
$$
\mathcal A_k(f)=
\{\alpha\in\mathcal L(\RR^n,0):f\comp\alpha(t)=ct^k+\cdots,\ c\ne0\}.$$
Similarly, we \coGR{define} arc spaces with sign 
$\mathcal A^+_k(f)\subset \mathcal A_k(f)$ and 
$\mathcal A^-_k(f)\subset \mathcal A_k(f)$\coGR{,} 
by
$$
\mathcal A^+_k(f)=\{\alpha\in\mathcal L(\RR^n,0):f\comp\alpha(t)=ct^k+\cdots,\ c=1\}
$$
and
$$
\mathcal A^-_k(f)=\{\alpha\in\mathcal L(\RR^n,0):f\comp\alpha(t)=ct^k+\cdots,\ c=-1\}.
$$
As the treatment of the spaces with sign $\mathcal A^+_k(f)$ and $\mathcal A^-_k(f)$ is similar, we denote in the following by $\mathcal A^\pm_k(f)$ one of these two spaces.
Since the $k$-jet of $\alpha\in\mathcal L(\RR^n,0)$ 
determines the $k$-jet of $f\comp\alpha$, we obtain an expression for
the measure of $\mathcal A_k(f)$ and of $\mathcal A^\pm_k(f)$ in terms of 
\coRR{the space of $m$-jets of arcs with $m\ge k$}. 
More precisely
$$
[\mathcal A_k(f)]=\frac{[p_m(\mathcal A_k(f))]}{\LL^{mn}}\ 
\textrm{ and }\ 
[\mathcal A^\pm_k(f)]=\frac{[p_m(\mathcal A^\pm_k(f))]}{\LL^{mn}}
$$
for $m\ge k$. 
The associated zeta function and zeta functions with sign are the formal power series with coefficients in $K_0(\Var_\RR)[[\LL^{-1}]]$ defined by 
$$
Z(f)=\medsum_{k=1}^\infty[\mathcal A_k(f)]t^k\ \textrm{ and }\ 
Z^\pm(f)=\medsum_{k=1}^\infty[\mathcal A^\pm_k(f)]t^k. 
$$

\begin{exam} 
Consider the one variable polynomial function defined by 
$f(x)=x^d$ with $d \in \NN^*$. Then we have 
$$
[\mathcal A_k(f)]=
\begin{cases}
(\LL-1)\LL^{-l}& \textrm{if }\  k=ld,~~l\in \NN, \\
0& \textrm{if }\ d\nmid k.
\end{cases}
$$
As a consequence, the zeta function of $f$ is equal to 
$$Z(f)=\LL^{-1}\frac{(\LL-1)t^d}{1-\LL^{-1}t^d}.$$
\end{exam}

\subsection{\coGR{The v}irtual Poincar\'e polynomial}\label{virt}

For real algebraic varieties, the best additive invariant known
is the virtual Poincar\'e polynomial \cite{MCP}. 
It assigns a polynomial $\beta(X)$ with integer coefficients 
to a (real algebraic) Zariski constructible set $X$
in such a way that\coGR{:} 
\begin{itemize}
\item $\beta(X)=\beta(Y)+\beta(X\setminus Y)$ for $Y \subset
  X$ a closed subvariety of $X$ (additivity), 
\item $\beta(X\times Y)=\beta(X)\beta(Y)$ (multiplicativity), and 
\item the coefficients coincide 
with the Betti numbers with $\ZZ_2$-coefficients for 
compact nonsingular real algebraic sets.
\end{itemize}
\begin{prop}[\cite{MCP}]\label{beta} Take $i\in \mathbb N$. The Betti
  number $\beta_i(\cdot)=\dim H_i(\cdot, {\mathbb Z}_2)$, 
considered on compact nonsingular real algebraic sets,
  admits \coGR{a} unique extension as an additive map $\beta_i$ to the
  category of Zariski constructible sets, 
with values in $\mathbb Z$. Namely
$$\beta_i(X)=\beta_i(Y)+\beta_i(X \setminus Y)$$
for $Y \subset X$ a closed subvariety of $X$. 

Moreover the polynomial
$\beta(\cdot)=\sum_{i \geq 0} \beta_i(\cdot)u^i \in \mathbb Z [u]$ is
multiplicative 
$$\beta(X\times Y)=\beta(X)\beta(Y)$$ 
for $X,Y$ Zariski constructible sets.
\end{prop}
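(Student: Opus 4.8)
The plan is to establish both assertions of Proposition~\ref{beta} — existence and uniqueness of the additive extension $\beta_i$, and multiplicativity of $\beta=\sum_{i\ge0}\beta_i u^i$ — by reducing every statement to the class of compact nonsingular real algebraic varieties, on which $\beta_i$ is prescribed, and then propagating along resolutions of singularities.

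\emph{Uniqueness.} A Zariski constructible set is a finite disjoint union of locally closed subvarieties, and a locally closed $W$ equals $\overline W\setminus(\overline W\setminus W)$ with $\overline W$ a closed variety, so additivity reduces the whole question to varieties. For these I would argue by d\'evissage on $\dim X$: splitting into irreducible components and replacing an irreducible $X$ by its closure in a projective space (the part at infinity having strictly smaller dimension), we may assume $X$ irreducible and compact. By Hironaka's resolution of singularities over $\RR$ there is a proper birational $\pi\colon\widetilde X\to X$ with $\widetilde X$ nonsingular and compact, which is an isomorphism over the regular locus $X_{\mathrm{reg}}$ and with $\pi^{-1}(X\setminus X_{\mathrm{reg}})$ of dimension $<\dim X$. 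Then for any additive $\beta_i'$ extending the Betti numbers, additivity gives
$$\beta_i'(X)=\beta_i'(\widetilde X)-\beta_i'\bigl(\pi^{-1}(X\setminus X_{\mathrm{reg}})\bigr)+\beta_i'(X\setminus X_{\mathrm{reg}}),$$
and every term on the right is forced: $\beta_i'(\widetilde X)=\beta_i(\widetilde X)$ because $\widetilde X$ is compact nonsingular, the other two by the inductive hypothesis. Hence $\beta_i'=\beta_i$.

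\emph{Existence.} The same d\'evissage shows how $\beta_i(X)$ \emph{should} be defined, recursively by the displayed formula, but one must prove this is independent of all choices and genuinely additive. Here I would invoke the extension principle of Guill\'en and Navarro Aznar, which — unlike Bittner's presentation of the Grothendieck ring, that also uses weak factorization, less readily available over $\RR$ — rests only on resolution of singularities: an assignment $X\mapsto\beta_i(X)$ on compact nonsingular real algebraic varieties extends uniquely to an additive invariant on all constructible sets provided it sends $\emptyset$ to $0$, is compatible with disjoint unions, and satisfies the blow-up relation $\beta_i(\mathrm{Bl}_Z X)-\beta_i(E)=\beta_i(X)-\beta_i(Z)$ for $X$ compact nonsingular, $Z\subset X$ nonsingular closed, $\mathrm{Bl}_Z X$ the blow-up along $Z$ and $E$ its exceptional divisor. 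The first two conditions are immediate, so the content is the blow-up relation, which I would read off the topology of the real blow-up: $E\to Z$ is the real projectivization of the normal bundle $N_{Z/X}$, with fibre a real projective space of dimension $c-1$, where $c$ is the codimension of $Z$ in $X$; a deleted tubular neighbourhood of $E$ in $\mathrm{Bl}_Z X$, and of $Z$ in $X$, are both homotopy equivalent to the sphere bundle $S(N_{Z/X})$, compatibly with the identification $\mathrm{Bl}_Z X\setminus E\cong X\setminus Z$. Comparing the two Mayer--Vietoris sequences and using the Leray--Hirsch theorem with $\ZZ_2$-coefficients for the projective bundle $E\to Z$ together with the $\ZZ_2$-Gysin sequence of $S(N_{Z/X})\to Z$ yields $\beta(\mathrm{Bl}_Z X)=\beta(X)+(u+\cdots+u^{c-1})\,\beta(Z)$ and $\beta(E)=(1+u+\cdots+u^{c-1})\,\beta(Z)$, hence the blow-up relation coefficient by coefficient.

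\emph{Multiplicativity and the constructible case.} For $X,Y$ compact nonsingular, $\beta(X\times Y)=\beta(X)\beta(Y)$ is the K\"unneth formula for homology with the field $\ZZ_2$. Since $\beta$ is additive it factors through the Grothendieck ring, in which the class of a product is the product of the classes; as $\beta$ is multiplicative on the compact nonsingular generators and the Guill\'en--Navarro Aznar extension can be taken to respect products (a product of hyperresolutions being a hyperresolution), $\beta$ is multiplicative on all varieties, and then on constructible sets by disjointification. Finally the additivity stated in the proposition for varieties and closed subvarieties extends to arbitrary Zariski constructible sets by the reduction used at the outset. I expect the one genuinely delicate point to be the existence half — verifying the real, $\ZZ_2$-coefficient blow-up formula on compact nonsingular varieties, and ensuring that the mechanism turning (disjoint-union compatibility plus blow-up relation) into a bona fide additive extension is available over the non-algebraically-closed field $\RR$; this is exactly why one works with Guill\'en--Navarro Aznar's cubical hyperresolutions, which require only Hironaka's theorem over $\RR$, rather than with weak factorization. (One actually obtains more, namely a weight complex of compact nonsingular varieties whose associated Euler characteristic is $\beta$, but that refinement is not needed for the statement.)
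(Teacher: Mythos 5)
This proposition is quoted from McCrory--Parusi\'nski \cite{MCP}; the paper gives no proof of it, so there is nothing internal to compare your argument against. Your sketch is, in substance, a faithful reconstruction of the proof in that reference: uniqueness by d\'evissage on dimension through resolution of singularities, existence via the Guill\'en--Navarro Aznar extension criterion, whose only non-formal hypothesis is the blow-up relation, verified topologically from $\beta(\mathrm{Bl}_Z X)=\beta(X)+(u+\cdots+u^{c-1})\beta(Z)$ and $\beta(E)=(1+u+\cdots+u^{c-1})\beta(Z)$ (both of which are correct for $\ZZ_2$-Betti numbers; e.g.\ blowing up a point of $\mathbb P^2(\RR)$ gives the Klein bottle, with $\beta=1+2u+u^2$). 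Two small remarks. First, your aside that weak factorization is ``less readily available over $\RR$'' is not accurate --- it holds over any field of characteristic zero, so Bittner's presentation would serve equally well; the real reason \cite{MCP} uses Guill\'en--Navarro Aznar is that it yields the finer weight-complex structure. Second, for multiplicativity, \cite{MCP} does not argue via products of hyperresolutions but by a direct induction on dimension: additivity and resolution reduce $\beta(X\times Y)$ to products of compact nonsingular varieties, where K\"unneth with $\ZZ_2$-coefficients applies; your appeal to the extension ``respecting products'' is the thinnest step of the sketch and is more safely replaced by this elementary induction. Neither point is a genuine gap.
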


The invariant $\beta_i$ is called the $i$-th virtual Betti number, and the
polynomial $\beta$ the virtual Poincar\'e polynomial. By evaluati\coGR{ng} of
the virtual Poincar\'e polynomial at $u=-1$ one recovers the Euler
characteristic for Borel-Moore homology, that is, the homology of
locally finite chains with closed supports \cite{MCP}.
The following simple example illustrates the way to compute, 
in \coGR{practice}, the virtual Poincar\'e polynomial. 

\begin{exam} 
Let $\mathbb P^n$ denote the real projective space of dimension $n$, 
which is nonsingular and compact. Then 
$\beta(\mathbb P^n)=1+u+\cdots+u^n$, since 
$$
\dim H_i(\mathbb P^n, {\mathbb Z}_2)
=
\begin{cases} 
1&\textrm{for $i \in \{0,\ldots,n\}$, and}\\
0&\textrm{otherwise.}
\end{cases}
$$ 
Now, compactify the affine line $\mathbb  A_{\mathbb R}^1$ in $\mathbb P^1$
 by adding one point $P$ \coGR{at infinity}. 
By additivity $\beta(\mathbb  A_{\mathbb R}^1)=\beta(\mathbb P^1)-\beta(P)=u$,  
and so $\beta(\mathbb A_{\mathbb R}^n)=u^n$ by multiplicativity.
\end{exam}

A crucial property of the virtual Poincar\'e polynomial is that the
degree of $\beta(X)$ is equal to the dimension of 
the Zariski constructible 
set $X$. 
In particular, the virtual Poincar\'e polynomial of a constructible set $X$ cannot be equal to zero as soon as the set of real points of $X$ is not empty. 
We consider in this paper the virtual Poincar\'e polynomial of the (infinite dimensional) spaces of arcs $\mathcal A_k(f)$ and $\mathcal A^{\pm}_k(f)$ associated with a polynomial function $f$, for $k\in \mathbb N^*$. It is defined, e.g. in the case without sign and as explained in section \ref{arc-space}, by the formula
$$\beta(\mathcal A_k(f))=\lim_{m\to\infty}\frac{\beta(p_m(\mathcal A_k))}{u^{mn}}.$$
We
will mainly focus on the degree of $\beta(\mathcal A_k(f))$ in section
\ref{estim-deg}, and we will also \coGR{discuss} its zeros in
section \ref{reco}.

\section{Arc spaces and Newton polyhedron\coGR{s}}
In this section we are interested in expressing the measure of the arc
spaces associated with a polynomial function germ in terms of its Newton polyhedron. Similar
results have already been obtained in \cite{DK,DL-modif,Guibert}. Here we focus mainly on a
formula that will enable us to estimate efficiently the degree of the
virtual Poincar\'e polynomial of the arc spaces in terms of the Newton
polyhedron of the germ.

We start with introducing some standard notation for the Newton
polyhedron associated with a polynomial.

\subsection{\coGR{The} Newton polyhedron}\label{Np}
Let $f:\RR^n\to\RR$ denote a polynomial function.
Consider its Taylor expansion at the origin of $\RR^n$
$$
f(x)=\medsum_{\nu \in \NN^n}c_\nu x^\nu,
$$
where $x^\nu=\prod_{i=1}^n {x_i}^{\nu_i}$ with
$\nu=(\nu_1,\dots,\nu_n) \in \NN^n$, and $c_\nu \in \RR$.
Let $\Gamma_+(f)$ denote the \textbf{Newton polyhedron} of $f$,
defined as the convex hull of the set
$$
\cup_{\nu \in \NN^n}(\nu+\RR_+^n):c_\nu\ne0, 
$$
where $\RR_+$ stands for $[0,+\infty)$.
The \textbf{Newton boundary} $\Gamma(f)$ of $f$ is the union of the compact faces of $\Gamma_+(f)$. 
We \coGR{write} $\gamma < \Gamma(f)$
\coGR{to denote that} 
the compact face $\gamma$ \coGR{belongs} to $\Gamma(f)$, 
\coGR{and} $\gamma < \sigma$ \coGR{for} the inclusion of two faces.
For $a=(a_1,\dots,a_n)\in \mathbb R^n_+$ and 
$\nu=(\nu_1,\dots,\nu_n)\in \mathbb R^n$, we set 
$\langle a,\nu\rangle=\sum_{i=1}^n a_i\nu_i$ and define the
multiplicity $m_f(a)$ of \coRR{$f$ relative to $a$} by
$$
m_f(a)=\min\{\langle a,\nu\rangle :\nu\in\Gamma_+(f)\}.
$$
For $a\in \mathbb R_+^n$, we define the face $\gamma_f(a)$ 
of the Newton polyhedron of $f$ associated with $a$ by
$$
\gamma_f(a)=\{\nu\in\Gamma_+(f):\langle a,\nu\rangle=m_f(a)\},$$
and for a subset $S$ of $\NN^n$ we set
$$
f_S(x)=\medsum_{\nu\in S}c_\nu x^\nu.
$$
We define an equivalence relation \coGR{o}n $\RR_+^n$ by 
$$
a\sim b \quad\Longleftrightarrow\quad \gamma_f(a)=\gamma_f(b)\coGR{.}
$$
The
partition of $\RR_+^n$ given by the equivalence classes of this
equivalence relation is the \textbf{dual Newton diagram} of $f$ and it
is denoted by $\Gamma^*(f)$. This defines a cone
subdivision of $\RR_+^n$, which we identify 
with the dual Newton polyhedron throughout the paper. Note that the function $m_f$ is a piecewise linear function on $\mathbb R^n_+$, which is linear on any cone belonging to this subdivision. In particular, on the cone corresponding to a face $\gamma$ of $\Gamma_+(f)$, the function $m_f$ is given by the scalar product $\langle \cdot,\nu \rangle$ with any $\nu \in \gamma$. 
Let $\Lambda(f)\subset \NN^n$ denote the set of primitive generators of the
1-cones of $\Gamma^*(f)$. 

The polynomial $f$ is said to be
\textbf{convenient} if the monomials $x_i^{p_i}$, for $i=1,\ldots,n$
and some $p_i\in \mathbb
N^*$, appear in
the expression of $f$ with non-zero coefficients. 

We say that the polynomial function $f$ is \textbf{non-degenerate} 
if, \coGR{for any compact face $\gamma$ of $\Gamma_+(f)$,} 
all singular points of $f_\gamma$ are contained in the union of 
some coordinate hyperplanes. 
Namely $f$ is \textbf{non-degenerate} if
$$
\biggl(
\pd{f_\gamma}{x_1}(c),\dots,\pd{f_\gamma}{x_n}(c)
\biggr)\ne(0,\dots,0)
$$
for all $c\in (\RR^*)^n$ with $f_\gamma(c)=0$, where $\gamma$ 
\coGR{is any} compact \coGR{face} of $\Gamma_+(f)$.

\coGR{Finally, for any} compact face \coGR{$\gamma$} of 
the Newton polyhedron of $f$\coGR{, w}e define algebraic subsets $X_\gamma$, $X^+_\gamma$ and $X^-_\gamma$ of $(\RR^*)^n$ 
\coGR{by}
$$
X_\gamma=\{c\in(\RR^*)^n:f_\gamma(c)=0\},
\quad\textrm{ and }\quad
X^{\pm}_\gamma=\{c\in(\RR^*)^n:f_\gamma(c)=\pm1\}.
$$
\begin{rem}\label{set-intro} 
There exists an algebraic variety 
$\widehat{X}_\gamma$
in $(\RR^*)^{\dim\gamma}$ so that
$$
X_\gamma\simeq (\RR^*)^{n-\dim\gamma}\times\widehat{X}_\gamma
$$
(cf. \cite{Arnold}, Vol. 2, Part II, Chapter 8 for example). Taking measure\coGR{s}, we therefore have
$$
[X_\gamma]=(\LL-1)^{n-\dim\gamma}[\widehat{X}_\gamma]. 
$$\end{rem}
\begin{exam}
Let $X$ denote the zero set of $f(x_1,x_2)=x_1^2+x_2^3$ in $(\RR^*)^2$. 
We have $X\simeq\RR^*\times \widehat{X}$
where $\widehat{X}=\{y_2\in\RR^*:y_2+1=0\}$.
This follows from the \coGR{identity} 
$f\comp\varphi(y_1,y_2)=y_1^6y_2^2(1+y_2)$, 
where $\varphi:(\RR^*)^2\to(\RR^*)^2$ is \coGR{the} algebraic isomorphism 
defined by $(y_1,y_2)\mapsto(x_1,x_2)=(y_1^3y_2,y_1^2y_2)$. 
\end{exam}
\begin{rem}
If $f$ is non-degenerate, then $X_\gamma$ (resp. $\widehat{X}_\gamma$) is a non-singular submanifold of $(\RR^*)^n$ (resp. $(\RR^*)^{\dim\gamma}$) of codimension 1, whenever it is not empty. Since $f_\gamma$ is weighted homogeneous, 
$$
\Bigl\{\pd{f_\gamma}{x_1}=\cdots=\pd{f_\gamma}{x_n}=0\Bigr\}=
\Bigl\{\pd{f_\gamma}{x_1}=\cdots=\pd{f_\gamma}{x_n}=f_\gamma=0\Bigr\}
$$
and the non-degeneracy of $f$ also implies that the varieties $X^\pm_\gamma$ are nonsingular manifolds of codimension 1, whenever they are not empty. Moreover, the differential of $f_\gamma$ is nonzero. Thus we have\coGR{:} 
\begin{itemize}
\item $X^+_\gamma$ and $X^-_\gamma$ cannot be empty simultaneously\coGR{, and}
\item \coGR{i}f $X_\gamma$ is not empty, then both $X^+_\gamma$ and $X^-_\gamma$ are not empty.
\end{itemize}
\end{rem}
\subsection{Motivic invariant of a polynomial function}\label{poly}
We express the measure of the arc spaces associated with a
polynomial function $f$ in terms of its Newton polyhedron. 
The set of integers $k\in \mathbb N^*$ for which the arc space $\mathcal A_k(f)$ 
is not empty has already been studied in the context of blow-analytic
equivalence and is called the set of Fukui invariants \cite{Ould,KP}. 
It coincides with the set of exponents that 
\coGR{appear} in the zeta function of $f$ with non-zero coefficients. We denote by
$$
A(f)=\{k\in \NN^* :\mathcal A_k(f)\ne\emptyset\}$$
the set $A(f)$ of Fukui invariants of $f$ and by
$$
A^\pm(f)=\{k\in \NN^* :\mathcal A_k^\pm(f)\ne\emptyset\}. 
$$ 
the sets $A^+(f)$ and $A^-(f)$ of Fukui invariants with sign of $f$.

Next lemma (adapted to the real setting from \cite{Guibert}, Lemma 2.2.1) computes the measure of the arc spaces associated with $f$
for arcs with a specified order $a\in \mathbb N^n$. 
As illustrated in Example \ref{ex1}, those arcs with order $a\in (\NN^*)^n$ play the most important role when computing the measure.

\begin{lem}\label{KeyLem}
Assume that $f$ is a non-degenerate polynomial. 
Take $a\in (\mathbb N^*)^n$ and $k\in
\NN^*$. The measure of $\mathcal L_a\cap \mathcal A_k(f)$ is given by
\begin{align*}
[\mathcal L_a\cap \mathcal A_k(f)]=&
\begin{cases}
0&\textrm{if }\ m_f(a)>k,\\
\bigl((\LL-1)^n-[X_{\gamma(a)}]\bigr)\, 
\LL^{-s(a)}&\textrm{if }\ m_f(a)=k,\\
(\LL-1)\,[X_{\gamma(a)}]\,\LL^{-s(a)-k+m_f(a)}&\textrm{if }\ m_f(a)<k.
\end{cases}
\end{align*}
In the case with sign, the measure of $\mathcal L_a\cap \mathcal A^\pm_k(f)$ is given by
\begin{align*}
[\mathcal L_a\cap \mathcal A^\pm_k(f)]=&
\begin{cases}
0&\textrm{if }\ m_f(a)>k,\\
[X^\pm_{\gamma(a)}]\,\LL^{-s(a)}&\textrm{if }\ m_f(a)=k,\\
[X_{\gamma(a)}]\,\LL^{-s(a)-k+m_f(a)}&\textrm{if }\ m_f(a)<k.
\end{cases}
\end{align*}
\end{lem}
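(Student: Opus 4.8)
The plan is to fix $a \in (\NN^*)^n$ and compute $[\mathcal L_a \cap \mathcal A_k(f)]$ directly from the definition of the motivic measure, namely as $\lim_{m\to\infty} [p_m(\mathcal L_a \cap \mathcal A_k(f))]/\LL^{mn}$, by analysing the $m$-jet level for $m$ large. First I would parametrize an arc $\alpha \in \mathcal L_a$: since $\ord \alpha_i = a_i$ for every $i$ (because $a \in (\NN^*)^n = I(a)$), we may write $\alpha_i(t) = c_i t^{a_i} + (\text{higher order})$ with $c_i \neq 0$. Substituting into $f$, the lowest-order contribution to $f \circ \alpha(t)$ comes precisely from those monomials $x^\nu$ with $\langle a, \nu \rangle$ minimal among $\nu \in \Gamma_+(f)$ — that is, from $\nu \in \gamma_f(a)$ — and the coefficient of $t^{m_f(a)}$ in $f\circ\alpha$ is exactly $f_{\gamma_f(a)}(c_1,\dots,c_n) = f_{\gamma(a)}(c)$. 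Hence if $m_f(a) > k$, the order of $f\circ\alpha$ is automatically $> k$ (or $f\circ\alpha\equiv 0$), so no arc in $\mathcal L_a$ lies in $\mathcal A_k(f)$ and the measure is $0$; this disposes of the first case.

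For the remaining two cases I would stratify $p_m(\mathcal L_a)$ according to the value $c = (c_1,\dots,c_n) \in (\RR^*)^n$ of the leading coefficients, recalling from Example \ref{ex1} that $p_m(\mathcal L_a) \cong \RR^{m|I(a)| - s(a)} \times (\RR^*)^{|I(a)|}$ with the $(\RR^*)^n$-factor recording $c$. When $m_f(a) = k$: the condition $f\circ\alpha(t) = c't^k + \cdots$ with $c' \neq 0$ is exactly $f_{\gamma(a)}(c) \neq 0$, which carves out $(\RR^*)^n \setminus X_{\gamma(a)}$ inside the $(\RR^*)^n$-factor, the remaining $\RR^{m n - s(a)}$ (after stabilizing the $\RR^{m|I(a)|-s(a)} = \RR^{mn-s(a)}$ free coordinates) being unconstrained; taking the class and dividing by $\LL^{mn}$ gives $\bigl((\LL-1)^n - [X_{\gamma(a)}]\bigr)\LL^{-s(a)}$. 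When $m_f(a) < k$: one needs $f_{\gamma(a)}(c) = 0$ (so $c \in X_{\gamma(a)}$) in order for the order of $f\circ\alpha$ to exceed $m_f(a)$, and then the coefficients of $t^{m_f(a)+1},\dots,t^{k-1}$ in $f\circ\alpha$ must vanish while the coefficient of $t^k$ must be nonzero. The key point is that these further conditions, expressed in terms of the higher-order coefficients of the $\alpha_i$, cut out (after using non-degeneracy, see below) a set isomorphic to $X_{\gamma(a)} \times \RR^{(\text{something})} \times \RR^*$; carefully bookkeeping the dimension drop $k - m_f(a)$ coming from killing those intermediate coefficients yields the exponent $-s(a) - k + m_f(a)$ and the factor $(\LL-1)[X_{\gamma(a)}]$. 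The sign cases $\mathcal A^\pm_k(f)$ are handled identically, replacing "$c' \neq 0$" by "$c' = \pm 1$": for $m_f(a)=k$ this replaces $(\RR^*)^n \setminus X_{\gamma(a)}$ by $X^\pm_{\gamma(a)}$, and for $m_f(a)<k$ the leading-coefficient locus $X_{\gamma(a)}$ is the same (the sign of the $t^k$-coefficient is governed by a fresh free parameter, giving $[X_{\gamma(a)}]$ rather than $(\LL-1)[X_{\gamma(a)}]$).

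The step I expect to be the main obstacle is the case $m_f(a) < k$: one must show that, once $c \in X_{\gamma(a)}$ is fixed, the equations forcing the vanishing of the coefficients of $t^{m_f(a)+1}, \dots, t^{k-1}$ in $f \circ \alpha$ and the non-vanishing of the $t^k$-coefficient can be solved so that the resulting fiber is an affine space of the expected dimension times $\RR^*$ — in other words, that these coefficient equations are, in the appropriate triangular sense, solvable for a suitable subset of the higher-order arc coefficients. This is where non-degeneracy of $f$ enters: along $X_{\gamma(a)} \subset (\RR^*)^n$ the differential of $f_{\gamma(a)}$ is nonzero (as recorded in the Remark following Remark \ref{set-intro}), so for each $c \in X_{\gamma(a)}$ some $\partial f_{\gamma(a)}/\partial x_j(c) \neq 0$, and perturbing the corresponding arc component $\alpha_j$ at the next order lets one adjust the $t^{m_f(a)+1}$-coefficient of $f\circ\alpha$ freely; iterating this Newton-style elimination handles all intermediate orders up to $t^{k-1}$ and then the $t^k$-coefficient, contributing the single $\RR^*$. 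I would make this precise by an explicit change of coordinates on the jet space (a triangular unipotent-type substitution), after which the class computation is a routine application of additivity and multiplicativity of $[\,\cdot\,]$ together with the normalization $[\RR] = \LL$, $[\RR^*] = \LL - 1$; finally one passes to the limit $m \to \infty$, noting all the constructions stabilize once $m \geq k$.
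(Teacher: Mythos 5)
Your proposal is correct and follows essentially the same route as the paper: parametrize arcs in $\mathcal L_a$ via $\alpha_i(t)=t^{a_i}\phi_i(t)$, read off the $t^{m_f(a)}$-coefficient of $f\circ\alpha$ as $f_{\gamma(a)}(\phi(0))$, and in the case $m_f(a)<k$ use non-degeneracy to solve the triangular system of vanishing conditions on the intermediate coefficients by adjusting the jet of one component $\phi_{i_0}$, with the final non-vanishing (resp.\ normalization to $\pm1$) condition contributing the $\RR^*$ factor (resp.\ a point). The only cosmetic difference is that the paper organizes the elimination via derivatives $F^{(l)}(0)$ of $F(t)=t^{-m_f(a)}f(\alpha(t))$ rather than an explicit coordinate change on the jet space, but this is the same computation.
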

\begin{proof} 
Since the proof of the case with sign is similar to the
case without sign, we treat only the \coGR{later}.
For an arc $\alpha\in\mathcal{L}_a$ with order $a=(a_1,\ldots,a_n)$, we compute its
composition with $f$ in terms of data coming from the Newton
polyhedron of $f$. Define $\phi(t)=(\phi_1(t),\dots,\phi_n(t))$ by 
$$
\alpha(t)=(t^{a_1}\phi_1(t),\dots,t^{a_n}\phi_n(t)),
$$
so that $\phi_i(0)\ne0$ for $i=1,\dots,n$. Using the Taylor expansion
for $f$, we obtain 
$$
f(\alpha(t))
=\medsum_{\nu \in \NN^n} c_\nu
\prod_{i=1}^n (t^{a_i}\phi_i(t))^{\nu_i}\\
=\medsum_{\nu \in \NN^n} c_\nu
\phi(t)^{\nu}t^{\langle a,\nu\rangle}.
$$
As the order of $f(\alpha(t))$ is greater than $m_f(a)$, we \coGR{may write} 
$$
f(\alpha(t))=t^{m_f(a)}(f_{\gamma(a)}(\phi(t))+R(t))
$$
where $R(t)$ is a power series, depending on $\phi$, with strictly positive order. In particular, if $k$ is strictly less than $m_f(a)$, then $\mathcal L_a\cap \mathcal A_k(f)$ is empty. If $k$ is equal to $m_f(a)$, then $\mathcal L_a\cap \mathcal A_k(f)$ is described by those arcs
$\alpha$ with $f_{\gamma(a)}(\phi(0))\neq 0$, meaning that $\phi (0)\in (\RR^*)^n \setminus X_{\gamma(a)}$. To compute the measure of $\mathcal L_a\cap \mathcal A_k(f)$, note that the quotient 
$$\frac{[p_m(\mathcal L_a\cap \mathcal A_k(f))]}{\LL^{mn}}$$
stabilizes for $m\geq k$, and for $m=k$ we have just seen that
$$p_{k}(\mathcal L_a\cap \mathcal A_k(f))\simeq \big( (\RR^*)^n \setminus X_{\gamma(a)}\big) \times \RR^{\sum_{i=1}^n(k-a_i)}$$
so that 
$$[\mathcal L_a\cap \mathcal A_k(f)]=\frac{[\big( (\RR^*)^n \setminus X_{\gamma(a)}\big) \times \RR^{\sum_{i=1}^n(k-a_i)}]}{\LL^{{k}n}}=\bigl((\LL-1)^n-[X_{\gamma(a)}]\bigr)\, 
\LL^{-s(a)}.$$

We focus now on the case where $k$ is strictly bigger than $m_f(a)$. 
In this case, \coGR{setting $F(t)=t^{-m_f(a)}f(\alpha(t))$, }
the arc $\alpha$ belongs to $\mathcal L_a\cap \mathcal A_k(f)$ 
if and only if $\phi (0)$ belongs to $X_{\gamma(a)}$\coGR{,}
$$
F'(0)=F''(0)=\cdots=F^{(k-m_f(a)-1)}(0)=0\textrm{, \ and }\ 
F^{(k-m_f(a))}(0)\ne0\coGR{.}
$$ 
Indeed, the\coGR{y} implies that the order of $f\circ \alpha$ is 
\coGR{equal} to $k$. 
That system determines the coefficients of $\phi$ (and so of $\alpha$), 
and in particular enables us to compute the measure of 
$\mathcal L_a\cap \mathcal A_k(f)$ in case $k>m_f(a)$. 
Since $F(t)=f_{\gamma(a)}(\phi(t))+R(t)$, we obtain that 
$$F'(t)=\medsum_{i=1}^n\phi_i^{'}(t)\pd{f_{\gamma(a)}}{x_i}(\phi(t))+R'(t).$$
Choosing for $\phi(0)$ any point in $X_{\gamma(a)}$, there exists at least 
one \coGR{partial} derivatives of $f_{\gamma(a)}$, 
say $\pd{f_{\gamma(a)}}{x_{i_0}}$, 
that does not vanish at $\phi(0)$ because $f$ is non-degenerate. 
This shows that we can freely choose the remaining coefficients of $\phi$ 
as soon as we fix the \coGR{value} of $\phi'_{i_0}(0)$ 
so that $F'(0)=0$. 
We proceed in a similar way for higher order derivatives of $F$. 
The computation of $F^{(2)}$ gives
\begin{align*}
F^{(2)}(t)
=&\medsum_{i=1}^n\phi_i^{(2)}(t)\pd{f_{\gamma(a)}}{x_i}(\phi(t))
+\medsum_{i,j=1}^n\phi_i^{'}(t)\phi_j^{'}(t)
\frac{\partial^2 f_{\gamma(a)}}{\partial x_i \partial x_j}(\phi(t))
+R^{(2)}(t)\\
=&\medsum_{i=1}^n\phi_i^{(2)}(t)\pd{f_{\gamma(a)}}{x_i}(\phi(t))+R_2(t)
\end{align*}
where $R_2(t)$ is equal to the sum of the last two terms in the first line. 
Repeating the procedure for any $l \in\{1,\ldots,k-m_f(a)\}$, 
there exists a power series $R_l$ such that
$$
F^{(l)}(t)=\medsum_{i=1}^n\phi_i^{(l)}(t)\pd{f_{\gamma(a)}}{x_i}(\phi(t)) +R_l(t).
$$
In particular, an arbitrary choice of the coefficients of $\phi$ 
enables us to solve the system $F'(0)=F''(0)=\cdots=F^{(k-m_f(a)-1)}(0)=0$ 
as soon as we fix the values of $\phi'_{i_0}(0),\ldots,\phi^{(k-m_f(a)-1)}_{i_0}(0)$. 
Finally, to take into account the additional \coGR{fact} $F^{(k-m_f(a))}(0)\ne0$ 
in order to guarant\coGR{ee} 
that $\alpha$ belongs to $\mathcal L_a\cap \mathcal A_k(f)$, 
we simply need to exclude one value for $\phi^{(k-m_f(a))}_{i_0}(0)$. 
We conclude \coGR{that} 
the measure of the truncation of 
$\mathcal L_a\cap \mathcal A_k(f)$ stabilizes \coGR{sufficiently large} $k$, 
and that
$$
p_{k}(\mathcal L_a\cap \mathcal A_k(f))\simeq 
X_{\gamma(a)}\times \RR^* \times \RR^{kn-s(a)-(k-m_f(a))},
$$
so that 
$$
[\mathcal L_a\cap \mathcal A_k(f)]=(\LL-1)\,[X_{\gamma(a)}]\,\LL^{-s(a)-k+m_f(a)}.
$$
\end{proof}

\begin{rem} The function $f_{\gamma}$ associated with the face $\gamma$ of 
the Newton polyhedron of $f$ is said to be not definite 
if $X_{\gamma}\neq \emptyset$. 
Set 
$$
m_0(f)=\min\{m_f(a):
a\in (\mathbb N^*)^n,~~f_{\gamma(a)} \textrm{ is not definite}
\}.
$$
Then Lemma \ref{KeyLem} (in the case ``$k>m_f(a)$") shows that any integer 
greater than or equal to $m_0(f)$ belongs to the set of Fukui
invariant $A(f)$ of $f$. We put
$T(f)=\{m\in\NN:m\ge m_0(f)\}$, so that $T(f)\subset A(f)$. 
The remaining integers contained in the set of Fukui
invariants coincide with the sets
\begin{align*}
S(f)=&\{m_f(a):a\in (\mathbb N^*)^n,\exists c\in(\RR^*)^n, f_{\gamma(a)}(c)\ne0\},\\
S^\pm(f)=&\{m_f(a):a\in (\mathbb N^*)^n,\exists c\in(\RR^*)^n, f_{\gamma(a)}(c)=\pm1\},
\end{align*}
as illustrated by Lemma \ref{KeyLem} (in the case "$k=m_f(a)$"). As a consequence 
$$
A(f)=S(f)\cup T(f)\ \textrm{ and }\ 
A^\pm(f)=S^\pm(f)\cup T(f).
$$
\end{rem}

Another consequence of Lemma \ref{KeyLem} is a nice description of the measure of the arc
spaces associated with $f$ in terms of the geometry of $f$ and of the
combinatorics of the Newton polyhedron of $f$. 
For a compact face $\gamma$ of
$\Gamma_+(f)$ and $k \in \mathbb N^*$, we define elements
$P_k(\gamma)$ and $Q_k(\gamma)$ in the 
locali\coGR{z}ed Grothendieck ring of real algebraic variety by setting
\begin{align*}
P_k(\gamma)=&
\medsum_{a\in (\mathbb N^*)^n,~\gamma(a)=\gamma,\ m_f(a)=k}\,\LL^{-s(a)}
\end{align*}
and
\begin{align*}
Q_k(\gamma)=&
\medsum_{a\in (\mathbb N^*)^n,~\gamma(a)=\gamma,\ m_f(a)<k}\,\LL^{-k+m_f(a)-s(a)}.
\end{align*}

\begin{thm}\label{Thm1} Let $k\in \NN^*$.
If $f$ is a non-degenerate polynomial, the measure of the arcs spaces
associated with $f$ can be decomposed into \coGR{several terms associated with} 
the compact faces of $\Gamma_+(f)$ as
$$
[\mathcal A_k(f)]
=
\medsum_{\gamma < \Gamma(f)}\,\bigl((\LL-1)^{n}-[X_{\gamma}]\bigr)\,P_k(\gamma)
+(\LL-1)\medsum_{\gamma < \Gamma(f)}\,[X_{\gamma}]\,Q_k(\gamma)
$$
In the case with sign, we obtain similarly
$$
[\mathcal A_k^\pm(f)]
=\medsum_{\gamma < \Gamma(f)}\,[X_\gamma^\pm]\ P_k(\gamma)
+\medsum_{\gamma < \Gamma(f)}\,[X_\gamma]\ Q_k(\gamma).
$$
\end{thm}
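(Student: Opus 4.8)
The plan is to decompose the arc space $\mathcal A_k(f)$ according to the order $a$ of the arc, apply Lemma~\ref{KeyLem} to each stratum, and then re-organize the resulting sum according to the compact face $\gamma(a)$ it belongs to, which will produce exactly the coefficients $P_k(\gamma)$ and $Q_k(\gamma)$. First I would note that for any arc $\alpha \in \mathcal L(\RR^n,0)$ there is a well-defined tuple $a = (\ord \alpha_1, \dots, \ord \alpha_n) \in (\NN \cup \{+\infty\})^n$, so that $\mathcal L(\RR^n,0)$ is the disjoint union of the sets $\mathcal L_a$ over $a \in \NN^n$ together with the arcs having some component identically zero. By the observation following Example~\ref{ex1} (``truncated arcs with some components equal to zero can be seen as the image under truncation of arcs with higher orders''), together with the fact that $[\mathcal L_a] = 0$ whenever $|I(a)| < n$, the only strata contributing to the measure are those with $a \in (\NN^*)^n$. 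Hence, after justifying that the measure is additive on this (countable, but locally finite in each truncation level) decomposition,
$$
[\mathcal A_k(f)] = \medsum_{a \in (\NN^*)^n} [\mathcal L_a \cap \mathcal A_k(f)].
$$

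Next I would substitute the formula of Lemma~\ref{KeyLem}. The stratum contributes $0$ when $m_f(a) > k$, contributes $\bigl((\LL-1)^n - [X_{\gamma(a)}]\bigr)\LL^{-s(a)}$ when $m_f(a) = k$, and contributes $(\LL-1)[X_{\gamma(a)}]\LL^{-s(a)-k+m_f(a)}$ when $m_f(a) < k$. Now I would group the terms: since $\gamma(a)$ is always a compact face of $\Gamma_+(f)$ for $a \in (\NN^*)^n$ (because all coordinates of $a$ are strictly positive, the supporting face $\gamma_f(a)$ is bounded), I can write the sum as a double sum, first over compact faces $\gamma < \Gamma(f)$, then over those $a \in (\NN^*)^n$ with $\gamma(a) = \gamma$. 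For fixed $\gamma$, both $[X_\gamma]$ and $(\LL-1)^n$ are independent of $a$, so they factor out of the inner sum, leaving precisely $\medsum_{\gamma(a)=\gamma,\, m_f(a)=k} \LL^{-s(a)} = P_k(\gamma)$ in the ``$m_f(a)=k$'' part and $\medsum_{\gamma(a)=\gamma,\, m_f(a)<k} \LL^{-k+m_f(a)-s(a)} = Q_k(\gamma)$ in the ``$m_f(a)<k$'' part. Collecting everything gives
$$
[\mathcal A_k(f)] = \medsum_{\gamma < \Gamma(f)} \bigl((\LL-1)^n - [X_\gamma]\bigr) P_k(\gamma) + (\LL-1)\medsum_{\gamma<\Gamma(f)} [X_\gamma]\, Q_k(\gamma),
$$
which is the first claimed identity. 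The signed case is verbatim the same manipulation, using the second half of Lemma~\ref{KeyLem}: the ``$m_f(a)=k$'' contribution is $[X_\gamma^\pm]\LL^{-s(a)}$, which sums to $[X_\gamma^\pm] P_k(\gamma)$, and the ``$m_f(a)<k$'' contribution is $[X_\gamma]\LL^{-s(a)-k+m_f(a)}$, which sums to $[X_\gamma]\, Q_k(\gamma)$ (note there is no factor $(\LL-1)$ here), yielding the second formula.

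The step I expect to be the main obstacle is the passage to the limit in $\LL^{-1}$, i.e. justifying that the measure $[\mathcal A_k(f)] = \lim_{m\to\infty} [p_m(\mathcal A_k(f))]/\LL^{mn}$ really does equal the sum of the stratum measures $\medsum_a [\mathcal L_a \cap \mathcal A_k(f)]$ term by term. One must check that for each truncation level $m$ only finitely many strata $\mathcal L_a$ have $p_m(\mathcal L_a \cap \mathcal A_k(f)) \neq \emptyset$ (those with $a_i \le m$ for all $i$, hence $s(a) \le mn$), that the omitted strata — arcs with a vanishing component — contribute nothing to the limit (using the higher-order-reparametrization remark and the vanishing $[\mathcal L_a] = 0$ for $|I(a)| < n$), and that the convergence in $K_0(\Var_\RR)[[\LL^{-1}]]$ is compatible with the infinite summation; concretely, that the $\LL$-adic valuation of $[\mathcal L_a \cap \mathcal A_k(f)]$ (roughly $s(a) - m_f(a)$ in the relevant cases) tends to $+\infty$ as $s(a) \to \infty$, so the series converges and can be reordered freely. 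Granting the framework of \cite{DL} for arc-space measures in which such stratified computations are legitimate — as is used implicitly already in Example~\ref{ex1} and Lemma~\ref{KeyLem} — the argument reduces to the bookkeeping described above.
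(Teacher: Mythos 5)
Your proposal is correct and follows essentially the same route as the paper: decompose $[\mathcal A_k(f)]$ over the orders $a\in(\NN^*)^n$ (discarding the strata with a vanishing component via Example~\ref{ex1}), substitute the case-by-case formulas of Lemma~\ref{KeyLem}, and regroup the sum by the compact face $\gamma(a)$ to recognize $P_k(\gamma)$ and $Q_k(\gamma)$. Your extra care about the convergence and reordering of the infinite sum in the localised Grothendieck ring is a legitimate point that the paper leaves implicit within the Denef--Loeser framework.
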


\begin{rem}
This decomposition of the measure of the arc spaces
$\mathcal{A}_k(f)$ and $\mathcal A_k^\pm(f)$, where $k\in \mathbb N^*$,
into a sum of two terms is motivated by the difference between\coGR{:} 
\begin{itemize}
\item 
arcs of order $a\in (\mathbb N^*)^n$ that
directly contribute to the coefficient of $t^{k}$, i.e. with $k={m_f(a)}$, and 
\item 
arcs of order $a\in (\mathbb N^*)^n$ that contribute to a term of order $k>m_f(a)$.
\end{itemize} 
\coGR{Understanding} both contributions will be the main step
in section \ref{weight} in order to recover the weights of a weighted
homogeneous polynomial from the zeta functions.
\end{rem}

\begin{rem}\label{levelh} Let us rewrite $P_k(\gamma)$ as
$$
P_k(\gamma)
=\medsum_{a\in (\mathbb N^*)^n,~\gamma(a)
=\gamma,\ m_f(a)=k}\,\LL^{-k+m_f(a)-s(a)},
$$
since $m_f(a)$ is equal to $k$ for those $a$ \coGR{appearing} in the summation. 
Using this expression of $P_k(\gamma)$, 
we see that the difference between $P_k(\gamma)$ and
$Q_k(\gamma)$ lies in the value of $m_f$ \coGR{at} $a\in (\mathbb N^*)^n$.
In particular, in order to understand the powers of $\LL$ in the expression\coGR{s} 
of $P_k(\gamma)$ and $Q_k(\gamma)$, 
we are lead to focus on the levels of
the piecewise linear function $m_f-s$ defined on the dual of the
Newton polyhedron of $f$, and more precisely on the subsets defined by
$m_f=k$ and $m_f<k$ for a given integer $k\in \mathbb N^*$. In the
sequel, we denote by
$h$ the function $h=m_f-s$.
\end{rem}

\begin{proof}[Proof of Theorem \ref{Thm1}]
We decompose the measure of $\mathcal A_k(f)$ with respect to the order
of the arcs and reco\coGR{nstitute} it with respect to the belonging of these
orders to the different compact faces of the Newton Polyhedron of $f$. 
First, it follows from the computation in Example \ref{ex1} that
$$
[\mathcal A_k(f)]=\medsum_{a\in (\mathbb N^*)^n}\,
[\mathcal L_a\cap\mathcal A_k(f)],
$$
since the measure of $[\mathcal L_a\cap\mathcal A_k(f)]$ 
is equal to zero for $a\in \NN^n\setminus (\NN^*)^n$.
Therefore, we obtain from Lemma
\ref{KeyLem} that
\begin{align*}
[\mathcal A_k(f)]
=&
\medsum_{\gamma < \Gamma(f)}\,
\bigl((\LL-1)^{n}-[X_{\gamma}]\bigr)
\medsum_{a\in (\mathbb N^*)^n, ~\gamma(a)=\gamma,\ m_f(a)=k}\,\LL^{-s(a)}\\
&+
(\LL-1)
\medsum_{\gamma < \Gamma(f)}\,[X_{\gamma}]\,
\medsum_{a\in (\mathbb N^*)^n,~\gamma(a)=\gamma,\ m_f(a)<k}
\LL^{-k+m_f(a)-s(a)}. 
\end{align*}
Similarly, 
in the case with sign, we obtain that 
\begin{align*}
[\mathcal A_k^\pm(f)]
=&
\medsum_{a\in (\mathbb N^*)^n}\,
[\mathcal L_a\cap\mathcal A_k^\pm(f)]\\
=&
\medsum_{\gamma < \Gamma(f)}\,
[X_{\gamma}^\pm]
\medsum_{a\in (\mathbb N^*)^n,~\gamma(a)=\gamma,\ m_f(a)=k}\, 
\LL^{-s(a)}\\
&+
\medsum_{\gamma < \Gamma(f)}\,
[X_{\gamma}]\ 
\medsum_{a\in (\mathbb N^*)^n,~\gamma(a)=\gamma,\ m_f(a)<k}
\LL^{-k+m_f(a)-s(a)}. 
\end{align*}
\end{proof}

\section{Estimate of degrees}\label{estim-deg}

This section is the heart of the paper. We give a linear bound for
the degree of the virtual Poincar\'e polynomial of the arc spaces
$\mathcal A_k(f)$ and $\mathcal A_k^{\pm}(f)$
associated with a given polynomial $f$, and we investigate when this bound is
sharp, in the sense that the equality holds for infinitely many $k$.  
Using this bound, we define the \textit{leading exponent} of 
$\beta(\mathcal A_k(f))$\coGR{; t}his leading exponent 
is encoded in the zeta function of $f$. 

Recall that $u$ stands for the Poincar\'e polynomial of the affine
line. In the sequel the notation $P_k(\gamma)$ and $Q_k(\gamma)$
introduced for Theorem \ref{Thm1} will be interpreted with the virtual
Poincar\'e polynomial rather than the measure in the localised
Grothendieck ring (namely $\LL$ is replaced by $u$).

By convention, we define $\dim\emptyset=-\infty$ and
$\max\emptyset=-\infty$.

\subsection{\coGR{Notation}}

We are going to prepare several \coGR{pieces of notation} 
continuing th\coGR{ose introduced} in section \ref{Np}. 
Since $m_f(a)=\min\{\langle a,\nu\rangle:\nu\in \Gamma_+(f)\}$ 
for $a\in \RR_+^n$, the smallest component of 
$a\in \mathbb R_+^n$ is equal to zero if $m_f(a)=0$. 
This means that \coGR{if} $a$ \coGR{belongs} to $(\RR_+^*)^n$\coGR{, then} 
$m_f(a)>0$. We define $e_f$ by 
\begin{equation}\label{Def:Ef}
e_f=\sup\Bigl\{1-\frac{s(a)}{m_f(a)}:a\in(\RR_+^*)^n\Bigr\}.
\end{equation}
Th\coGR{e} number $e_f$ will enable us to give a bound for 
the degree of the virtual Poincar\'e polynomial of the arc spaces. 
\coGR{Finaly, w}e denote by $\Lambda_{\max}(f)\subset \NN^n$ 
the primitive generators of $1$-cones in $\Gamma^*(f)$ that realise $e_f$\coGR{;} 
that is, 
$$
\Lambda_{\max}(f)=\{v\in \Lambda(f): m_f(v)>0,~~e_f=1-\frac{s(v)}{m_f(v)}\}.
$$
\begin{rem}
If $f$ is convenient, then the facets are supported by primitive vectors $v$ 
with $v\in(\NN^*)^n$
or $e_i$ ($i=1,\dots,n$). Since $m_f(e_i)=0$, 
we conclude that $\Lambda_{\max}(f)\subset (\NN^*)^n$. 
If $f$ is not convenient, then there are vectors 
$v\in \Lambda(f)\subset(\NN^n\setminus(\NN^*)^n)$ with $m(v)>0$. 
These $v$ support non-compact facets. Sometimes such $v$ belongs to 
$\Lambda_{\max}(f)$ and
it may happen that $\Lambda_{\max}(f)$ \coGR{only} contains such elements. 
\coGR{Thus,} the set $\Lambda_{\max}(f)\cap(\NN^*)^n$ could be empty. 
\end{rem}
\begin{exam} 
For the two \coGR{variable} function $f(x,y)=x^5+x^2y$, 
the set of primitive generators of $1$-cones in $\Gamma^*(f)$ 
is $\Lambda(f)=\{(1,0),(1,3),(0,1)\}$. 
The number $e_f$ is equal to $1/2$ and $\Lambda_{\max}(f)=\{(1,0)\}$. 
\end{exam}
\begin{exam}
If $f(x,y,z)=xy^d+yz^d+\coRR{zx^d}$, we have 
$$
\Lambda(f)=\{(1,0,0),(0,1,0),(0,0,1),(1,d,0),(0,1,d),(d,0,1),(1,1,1)\}
$$
and $e_f=1-\min\{\frac{d+1}{d},\frac3{d+1}\}=1-\frac3{d+1}$. So we obtain that 
$\Lambda_{\max}(f)=\{(1,1,1)\}$. 
\end{exam}
\begin{exam}
For $f=(x^3+y^3)z+x^d+y^d$ ($d\ge4$), we have 
$$
\Lambda(f)=\{(1,0,0),(0,1,0),(0,0,1),(1,1,0),(1,1,d-3)\}
$$ 
and $e_f=1-\min\{\frac23,\frac{d-1}d\}=1-2/3=1/3$. We thus obtain that 
$\Lambda_{\max}(f)=\{(1,1,0)\}$. 
\end{exam}
\begin{prop}\label{A}
For any integer $k\in \NN^*$, we \coGR{have} that 
\begin{align*}
e_f
=&\sup
\Bigl\{
1-\frac{s(a)}{m_f(a)}:a\in(\RR_+^*)^n, \ m_f(a)=k
\Bigr\}
\\
=&\max\Bigl\{1-\frac{s(a)}{m_f(a)}:a\in\Lambda(f),\ m_f(a)>0\Bigr\}. 
\end{align*}
In particular $e_f \in \QQ$. Moreover 
$$
e_f\ge\max
\Bigl\{
1-\frac{s(a)}{m_f(a)}:a\in(\NN^*)^n, \ m_f(a)=k
\Bigr\}.
$$
and the equality holds 
for infinitely many $k$ if and only if $\Lambda_{\max}(f)\cap (\NN^*)^n \neq \emptyset$. 
\end{prop}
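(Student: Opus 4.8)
The plan is to isolate two structural features of $\rho(a):=1-s(a)/m_f(a)$, which is defined on the open set $\{a\in\RR_+^n:m_f(a)>0\}$ (this set contains $(\RR_+^*)^n$): first, $\rho$ is invariant under positive rescaling $a\mapsto\lambda a$, because both $s$ and $m_f$ are positively homogeneous of degree one; second, $\rho$ is continuous on $\{m_f>0\}$, being a quotient of the continuous piecewise-linear functions $s$ (globally linear) and $m_f$ (linear on every cone of $\Gamma^*(f)$, as recalled in section \ref{Np}). The scaling invariance yields the first displayed equality at once: for any $k\in\NN^*$ and any $a\in(\RR_+^*)^n$, the point $(k/m_f(a))\,a$ still lies in $(\RR_+^*)^n$, has $m_f$ equal to $k$, and has the same value of $\rho$; since the set with $m_f=k$ is a subset of $(\RR_+^*)^n$, the two suprema coincide.

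For the second equality I would fix $a\in(\RR_+^*)^n$, take the cone $\sigma$ of $\Gamma^*(f)$ containing $a$ in its relative interior, and write $a=\sum_i t_iv_i$ with all $t_i>0$ and $v_i\in\Lambda(f)$ the primitive generators of $\sigma$. Then $m_f(a)=\sum_i t_im_f(v_i)>0$ and $s(a)=\sum_i t_is(v_i)$; discarding the (possibly present) indices with $m_f(v_i)=0$ only decreases $s(a)/m_f(a)$, since each $s(v_i)>0$, and for the surviving indices the mediant inequality (a positive-weighted average of the fractions $s(v_i)/m_f(v_i)$ is at least the smallest of them) gives $\rho(a)\le\max\{\rho(v):v\in\Lambda(f),\ m_f(v)>0\}$. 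Finiteness of $\Lambda(f)$ makes this bound a maximum, and the reverse inequality follows because each such $v$ is a limit of points of $(\RR_+^*)^n$ at which $\rho$ is continuous. This also gives $e_f\in\QQ$ (a finite maximum of rationals $1-s(v)/m_f(v)$ with $s(v),m_f(v)\in\NN^*$), and, since $(\NN^*)^n\subseteq(\RR_+^*)^n$, the displayed inequality for the $(\NN^*)^n$-maximum (if the relevant set is empty the maximum is $-\infty$ and the inequality is trivial).

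For the equality-case assertion, the ``if'' direction is easy: if $v\in\Lambda_{\max}(f)\cap(\NN^*)^n$ then for every $\ell\in\NN^*$ one has $\ell v\in(\NN^*)^n$, $m_f(\ell v)=\ell m_f(v)$ and $\rho(\ell v)=e_f$, so equality holds for each of the infinitely many integers $k=\ell m_f(v)$. For the ``only if'' direction, by the same rescaling it suffices to treat one $k_0$ for which equality holds: pick $a\in(\NN^*)^n$ with $m_f(a)=k_0$ and $\rho(a)=e_f$, and let $\sigma,v_1,\dots,v_r$ be as above. Tracking the equality cases in the two inequalities, $\rho(a)=e_f$ forces every $m_f(v_i)>0$ and every fraction $s(v_i)/m_f(v_i)$ equal to their common minimum, i.e. $\rho(v_i)=e_f$, so all $v_i$ lie in $\Lambda_{\max}(f)$. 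Moreover, because $a\in(\RR_+^*)^n$, the linear form $\langle a,\cdot\rangle$ attains its minimum on $\Gamma_+(f)$ along a \emph{compact} face $\gamma=\gamma_f(a)$, and $\sigma=\sigma_\gamma$; the $v_i$ are precisely the inner normals of the facets of $\Gamma_+(f)$ containing $\gamma$. I would then argue that at least one of these facets is compact: the inner normal $v$ of a non-compact facet must have some coordinate $v_j=0$ (the facet is stable under adding $\RR_+e_j$), and a compact facet has inner normal in $(\NN^*)^n$ (again, a vanishing coordinate of the normal would force unboundedness), giving an element of $\Lambda_{\max}(f)\cap(\NN^*)^n$.

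The hard part will be exactly this last extraction — showing that a face $\gamma_f(a)$ realizing $e_f$ must be bordered by a compact facet, equivalently that the cone $\sigma$ has a primitive generator lying in $(\NN^*)^n$. When $f$ is convenient this is immediate, since the pure powers $x_j^{p_j}$ force $m_f(v)=0$ for the inner normal $v$ of every non-compact facet, whereas we have just shown that each $v_i$ satisfies $m_f(v_i)>0$; the general case needs a more careful analysis of how the unbounded facets of $\Gamma_+(f)$ sit along the coordinate hyperplanes relative to $\gamma_f(a)$. Everything else reduces to the homogeneity-and-mediant bookkeeping sketched above.
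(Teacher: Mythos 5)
Your treatment of the two displayed equalities, the rationality of $e_f$, and the inequality over $(\NN^*)^n$ is correct and follows essentially the same route as the paper: both arguments reduce the supremum to the primitive generators of the $1$-cones of $\Gamma^*(f)$. The paper does this by extracting a convergent subsequence on the slice $\{m_f=1\}$ and observing that the linear function $s$ must then be constant on the corresponding face of that slice; your version, writing $a=\sum_i t_i v_i$ over the extreme rays of the cone containing $a$ and invoking the mediant inequality (after discarding the rays with $m_f(v_i)=0$, which only helps since $s(v_i)>0$), together with continuity of $1-s/m_f$ on $\{m_f>0\}$ for the reverse bound, is if anything more explicit. The ``if'' half of the final equivalence is also fine.

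The genuine gap is the one you flag yourself: the ``only if'' half. Your equality analysis correctly shows that if $a\in(\NN^*)^n$ realizes $e_f$, then every extreme ray $v_i$ of the minimal cone of $\Gamma^*(f)$ containing $a$ satisfies $m_f(v_i)>0$ and $v_i\in\Lambda_{\max}(f)$; but nothing yet forces one of these $v_i$ into $(\NN^*)^n$, and the compact facet you hope to extract need not exist. All the facets of $\Gamma_+(f)$ containing the compact face $\gamma_f(a)$ can be non-compact, with normals on the boundary of $\RR_+^n$ whose positive span nevertheless contains $a$: for $f=x_1x_2$ one has $m_f=s$ on $\RR_+^2$, so every $a\in(\NN^*)^2$ with $s(a)=k$ realizes $e_f=0$ and equality holds for every $k\geq 2$, yet $\Lambda_{\max}(f)=\{e_1,e_2\}$ is disjoint from $(\NN^*)^2$. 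So the extraction cannot be carried out unconditionally; it does go through exactly in the situation you describe (e.g.\ $f$ convenient, where every non-compact facet normal $v$ has $m_f(v)=0$ and is eliminated by the equality analysis, while compact facet normals automatically lie in $(\NN^*)^n$), which is the only case the paper exploits later. For comparison, the paper's own proof disposes of this point in a single unexplained sentence, so your attempt is the more careful of the two; but as written the ``only if'' direction is not established.
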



\begin{proof}[Proof of Proposition \ref{A}]
Since the function $s$ is linear and the function $m_f$ is piecewise linear, 
then $\frac{s(\lambda a)}{m_f(\lambda a)}=\frac{s(a)}{m_f(a)}$ 
for any $\lambda \in \RR_+^*$ and $a\in (\RR_+^*)^n$. 
In particular, the right\coGR{-}hand side of the first equality 
does not depend on $k$. 
Then the equality is \coGR{clearly attained for infinitely many $k$} 
by definition of $e_f$.

To see the second equality, 
we choose $k=1$ and we take a sequence 
$(a^j)_{j\in \NN}\in \big((\RR_+^*)^n\big)^{\NN}$ with
$m_f(a^j)=1$ and $1-s(a^j)\to e_f$ as $j\to \infty$. 
The sequence $(s(a^j))_{j\in \NN}$ is bounded, 
therefore so is $(a^j)_{j\in \NN}$ and, taking a subsequence if necessary, 
we may assume that the sequence $(a^j)_{j\in \NN}$ is convergent to 
a limit $b\in \RR_+^n$ satisfying $m_f(b)=1$ and $e_f=1-s(b)$. 
We want to prove that there exist $\lambda\in \RR_+^*$ 
and a primitive generator $v\in \Lambda(f)$ with $m_f(\lambda v)=1$ 
and $s(\lambda v)=s(b)$. 
Assume $b$ is not such a multiple of an element in $\Lambda (f)$. 
Denote by $\sigma$ the subcone of $\Gamma^*(f)$ corresponding to 
the face $\gamma_f(b)$ of the Newton polyhedron of $f$, 
and consider the restriction of $s$ to $\sigma \cap \{m_f=1\}$. 
\coGR{The} restriction of the linear function $s$ \coGR{must} be constant 
otherwise $e_f=1-s(b)$ can not be the supremum of $1-s/m$. 
As a consequence, for any $v\in \Lambda (f)$ \coGR{with $v\in\sigma$}. 
and for $\lambda \in \RR_+^*$ with $m_f(\lambda v)=1$, 
we obtain $s(\lambda v)=s(b)$. 

The last inequality is clear from the first equality, and the condition on when the equality holds comes from the second equality.
\end{proof}

By definition, $e_f$ is strictly less than $1$. We describe its sign in next proposition. 

\begin{prop}\label{1111} The sign of $e_f$ is decided by the following conditions.
\begin{itemize}
\item If $(1,\dots,1)\not\in\Gamma_+(f)$, then $e_f>0$. 
\item If $(1,\dots,1)\in\Gamma(f)$, then $e_f=0$.  
\item If $(1,\dots,1)$ \coGR{is in} the interior of $\Gamma_+(f)$, then $e_f<0$.
\end{itemize}
\end{prop}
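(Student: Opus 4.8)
The plan is to use the characterization of $e_f$ from Proposition~\ref{A}, namely
$$
e_f=\sup\Bigl\{1-\frac{s(a)}{m_f(a)}:a\in(\RR_+^*)^n\Bigr\},
$$
and to analyze it by testing the diagonal vector $\BS 1=(1,\dots,1)$ and its positive multiples, which is the natural candidate for the supremum here since $s(\BS 1)=n$ and $m_f(\lambda\BS 1)=\lambda\, m_f(\BS 1)$. The key observation is the dictionary between membership of $\BS 1$ in the Newton polyhedron and the comparison of $s$ with $m_f$ along the diagonal: by definition $m_f(a)=\min\{\langle a,\nu\rangle:\nu\in\Gamma_+(f)\}$, so evaluating at $a=\BS 1$ gives $m_f(\BS 1)=\min\{s(\nu):\nu\in\Gamma_+(f)\}$, i.e. $m_f(\BS 1)$ is the smallest value of the coordinate-sum over $\Gamma_+(f)$. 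Hence $\BS 1\in\Gamma_+(f)$ iff $m_f(\BS 1)\ge s(\BS 1)=n$ (combined with the fact that the defining inequalities of $\Gamma_+(f)$ are exactly $\langle v,\nu\rangle\ge m_f(v)$ for $v$ ranging over $\Lambda(f)$), $\BS 1$ lies on $\Gamma(f)$ (on a compact face) iff equality $m_f(\BS 1)=n$ is achieved at a vertex of a compact face, and $\BS 1$ is interior iff $m_f(\BS 1)>n$ strictly for every defining inequality; I will phrase all three in terms of the sign of $1-\frac{s(\BS 1)}{m_f(\BS 1)}=1-\frac{n}{m_f(\BS 1)}$.

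First I would treat the middle case: if $\BS 1\in\Gamma(f)$ then $\BS 1$ lies on a compact face, so $m_f(\BS 1)=\langle v,\BS 1\rangle$ for... more precisely $\min_{\nu\in\Gamma_+(f)}s(\nu)=n$ is realized at $\nu=\BS 1$, giving $m_f(\BS 1)=n$, hence $1-s(\BS 1)/m_f(\BS 1)=0$, so $e_f\ge 0$. Combined with the general bound $e_f<1$ this is not yet enough; for the reverse inequality $e_f\le 0$ I use Proposition~\ref{A}: $e_f=\max\{1-s(v)/m_f(v):v\in\Lambda(f),\ m_f(v)>0\}$, and since $\BS 1$ lies on a compact face $\gamma$, every primitive generator $v$ of a $1$-cone in $\Gamma^*(f)$ with $m_f(v)>0$ satisfies that the supporting hyperplane $\langle v,\cdot\rangle=m_f(v)$ passes through $\Gamma_+(f)$ and that $\BS 1\in\Gamma_+(f)$ forces $\langle v,\BS 1\rangle\ge m_f(v)$, i.e. $s(v)\ge m_f(v)$, hence $1-s(v)/m_f(v)\le 0$. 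So $e_f=0$. For the first case, $\BS 1\notin\Gamma_+(f)$: then there is a defining inequality $\langle v,\cdot\rangle\ge m_f(v)$, $v\in\Lambda(f)$, $m_f(v)>0$, violated by $\BS 1$, i.e. $s(v)<m_f(v)$, so $1-s(v)/m_f(v)>0$ and therefore $e_f>0$. For the third case, $\BS 1$ interior to $\Gamma_+(f)$: then $\langle v,\BS 1\rangle>m_f(v)$ strictly for every $v\in\Lambda(f)$ with $m_f(v)>0$, so $1-s(v)/m_f(v)<0$ for all such $v$, and since by Proposition~\ref{A} the supremum $e_f$ is a maximum over this finite set, $e_f<0$.

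The main obstacle will be making rigorous the equivalences between the position of $\BS 1$ relative to $\Gamma_+(f)$ and the sign of the quantities $1-s(v)/m_f(v)$ over $v\in\Lambda(f)$ — in particular, that interiority of $\BS 1$ is equivalent to \emph{strict} satisfaction of \emph{all} the facet inequalities indexed by $\Lambda(f)$, and that non-membership is equivalent to violation of \emph{some} such inequality. This is a standard fact about polyhedra defined by their supporting halfspaces ($\Gamma_+(f)=\bigcap_{v\in\Lambda(f)}\{\nu:\langle v,\nu\rangle\ge m_f(v)\}$ together with the positivity constraints $\nu_i\ge 0$, whose supporting vectors are the $e_i$ with $m_f(e_i)=0$, hence irrelevant to the three quantities since we only range over $v$ with $m_f(v)>0$), but I would spell it out carefully. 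Everything else — the evaluation $m_f(\BS 1)=\min_{\nu\in\Gamma_+(f)}s(\nu)$ and the reduction of the supremum to a maximum over $\Lambda(f)$ — is immediate from the definitions and from Proposition~\ref{A}.
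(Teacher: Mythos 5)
Your overall strategy --- test the diagonal against the supporting half-spaces of $\Gamma_+(f)$ and reduce the supremum defining $e_f$ to a maximum over the finite set $\Lambda(f)$ via Proposition \ref{A} --- is exactly the paper's (the paper phrases the half-space duality with $a$ ranging over all of $(\RR_+^*)^n$ rather than over $\Lambda(f)$, but it is the same argument), and your first and third cases are correct. There is, however, one step in the middle case that would fail: the claim that $\BS 1\in\Gamma(f)$ forces $\min_{\nu\in\Gamma_+(f)}s(\nu)$ to be realised at $\nu=\BS 1$, i.e. $m_f(\BS 1)=n$. Lying on \emph{some} compact face does not mean lying on the face $\gamma_f(\BS 1)$ dual to the diagonal direction. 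For $f=x_1^2+x_2^4+x_3^4$ the point $(1,1,1)=\tfrac12(2,0,0)+\tfrac14(0,4,0)+\tfrac14(0,0,4)$ lies on the compact facet of $\Gamma_+(f)$, yet $m_f(\BS 1)=\min\{2,4,4\}=2\neq 3$, so $1-s(\BS 1)/m_f(\BS 1)=-1/2$ and testing $a=\BS 1$ does not give $e_f\ge 0$.

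The repair is immediate and already implicit in your setup: since $\BS 1$ lies on a compact face, there is $a\in(\RR_+^*)^n$ with $\BS 1\in\gamma_f(a)$, hence $\langle a,\BS 1\rangle=m_f(a)$, i.e. $s(a)=m_f(a)$; since $a\in(\RR_+^*)^n$ implies $m_f(a)>0$, this yields $1-s(a)/m_f(a)=0$ and therefore $e_f\ge 0$ (in the example above $a=(2,1,1)$ does the job). Combined with your correct observation that $\BS 1\in\Gamma_+(f)$ forces $s(v)\ge m_f(v)$ for every $v\in\Lambda(f)$ with $m_f(v)>0$, hence $e_f\le 0$, the middle case follows. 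In short: the sign of $e_f$ should always be witnessed by a normal vector of a face of $\Gamma_+(f)$ through $\BS 1$, never by the vector $\BS 1$ itself regarded as an element of the dual space.
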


\begin{proof}
By definition of the function $m_f$, 
the following equivalences hold:
\begin{itemize}
\item 
$(1,\dots,1)\in\Gamma_+(f)$ if and only if $m_f(a)\le s(a)$ 
for any $a\in (\mathbb R_+^*)^n$. 
\item 
$(1,\dots,1)\in$ the interior of $\Gamma_+(f)$ 
if and only if $m_f(a)<s(a)$  for any 
$a\in (\mathbb R_+^*)^n$. 
\item 
$(1,\dots,1)\notin \Gamma_+(f)$ if and only if $m_f(a)>s(a)$  
for some $a\in (\mathbb R_+^*)^n$. 
\end{itemize}
\end{proof}

\coGR{To prepare for the forthcoming} proof of Theorem \ref{LinearBound}, 
we introduce some more notation corresponding to numbers $e_{\gamma}$ 
analog\coGR{ous} to $e_f$ but restricted to a given face $\gamma$ 
of the Newton polyhedron of $f$, and to elements $a\in (\RR_+^*)^n$ 
realising such $e_{\gamma}$.

\begin{defn}\label{def:E} 
Let $\gamma$ be a compact face of $\Gamma_+(f)$.
We set 
\begin{align*}
e_\gamma=&
\sup\{1-\tfrac{s(a)}{m_f(a)}:a\in(\RR_+^*)^n,\ \gamma(a)=\gamma\},\\ 
m_\gamma=&
\min\{m_f(a):a\in(\NN^*)^n, \ \gamma(a)=\gamma\}.
\end{align*} 
For $k \in \mathbb N^*$, we denote by $E_k$ the set of $n$-tuples of positive numbers 
realising $e_f$ on the level $m_f=k$,
$$
E_k=\Bigl\{a\in(\RR_+^*)^n: 
e_f=1-\frac{s(a)}{m_f(a)}, \ m_f(a)=k\Bigr\},
$$
\coGR{and} we denote by $E_k(\gamma)$ its version relative to the face 
$\gamma$\coGR{;} that is, 
$$
E_k(\gamma)=\Bigl\{a\in(\RR_+^*)^n: 
e_\gamma=1-\frac{s(a)}{m_f(a)},\ m_f(a)=k, \ \gamma(a)=\gamma
\Bigr\}.
$$
\coGR{Also, we define} $e^0_f$ by 
$$
e^0_f=\max\{m_\gamma e_\gamma:X_\gamma\ne\emptyset\}.
$$ 
Similarly in the case with sign, we set
$$e^\pm_f=\max\{e_\gamma:X^\pm_\gamma\ne\emptyset\}.$$
\end{defn}

\begin{rem} We have $e_\gamma=\max\{1-\frac{s(a)}{m_f(a)}:a\in\Lambda(f),\
\gamma(a)\supset \gamma\}$ as a consequence of the proof of Proposition \ref{A}. 
It is clear that $e_f=\max\{e_\gamma:\gamma<\Gamma_+(f)\}$. 
\end{rem}

For $k\in \mathbb N^*$ and $\gamma$ a compact face of $\Gamma_+(f)$, the set $E_k(\gamma)$ is defined by a linear equation in the cone corresponding to $\gamma$, so $E_k(\gamma)$ is convex. 
\coGR{In fact,} the same holds true for $E_k$.

\begin{lem}
For $k\in \mathbb N^*$, the set $E_k$ is convex.
\end{lem}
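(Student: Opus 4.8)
The plan is to reduce the convexity of $E_k$ to the convexity of the finitely many sets $E_k(\gamma)$, and then glue along the piecewise-linear structure of $m_f$. Recall that by Proposition \ref{A} we have $e_f=\max\{e_\gamma:\gamma<\Gamma_+(f)\}$, where the maximum is taken over the compact faces of $\Gamma_+(f)$; call a compact face $\gamma$ \emph{extremal} if $e_\gamma=e_f$. First I would observe that
$$
E_k=\bigcup_{\gamma\text{ extremal}}E_k(\gamma),
$$
since $a\in E_k$ means $m_f(a)=k$ and $1-s(a)/m_f(a)=e_f$, and writing $\gamma=\gamma_f(a)$ for the face determined by $a$, this forces $e_\gamma=e_f$ (so $\gamma$ is extremal) and $a\in E_k(\gamma)$; conversely each $E_k(\gamma)$ with $\gamma$ extremal is contained in $E_k$. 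Each $E_k(\gamma)$ is convex because, inside the closed convex cone $\sigma_\gamma$ of $\Gamma^*(f)$ corresponding to $\gamma$, the function $m_f$ is the linear form $\langle\cdot,\nu\rangle$ for any $\nu\in\gamma$, so the conditions $m_f(a)=k$ and $s(a)=(1-e_f)k$ are two linear equations, cutting out a convex subset of the convex cone $\sigma_\gamma$.

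The substance is then to show that a union of several convex sets $E_k(\gamma)$ is again convex here, which is false in general and must use the geometry. The key point is that on the whole cone $\RR_+^n$ the function $a\mapsto s(a)-(1-e_f)m_f(a)$ is concave: $s$ is linear, $m_f$ is piecewise linear and concave (being a minimum of the linear forms $\langle\cdot,\nu\rangle$ over $\nu\in\Gamma_+(f)$), and $1-e_f>0$ since $e_f<1$, so $-(1-e_f)m_f$ is convex — hmm, that gives convex, not concave. I would instead argue directly: by Proposition \ref{A}, $1-s(a)/m_f(a)\le e_f$ for all $a\in(\RR_+^*)^n$, i.e. $s(a)\ge(1-e_f)m_f(a)$ everywhere, so $E_k=\{a\in(\RR_+^*)^n: m_f(a)=k,\ s(a)\le(1-e_f)k\}$, the superlevel condition being automatic. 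Now $\{m_f=k\}\cap\RR_+^n=\partial\{m_f\ge k\}\cap\RR_+^n$ is not convex, but $\{a:m_f(a)\ge k\}$ is convex (superlevel set of the concave $m_f$), and on any ray $a\mapsto\lambda a$ the quantity $s/m_f$ is constant; so a point $a$ with $m_f(a)=k$ lies in $E_k$ iff the ray through it meets the convex set $C:=\{b\in(\RR_+^*)^n: s(b)\le(1-e_f)m_f(b)\}$ \emph{and} that ray's point at level $k$ is exactly on $\{m_f=k\}$. This shows $E_k$ is the radial projection onto the level hypersurface $\{m_f=k\}$ of the convex cone $C$, intersected with $\{m_f=k\}$.

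Putting this together: I claim $E_k=C\cap\{m_f=k\}$ where $C=\{b\in(\RR_+^*)^n:s(b)=(1-e_f)m_f(b)\}$, the equality $s=(1-e_f)m_f$ rather than inequality holding because on $\{m_f=k\}$ the defining condition of $E_k$ is $s(a)=(1-e_f)k=(1-e_f)m_f(a)$; and $C$ is convex since on each cone $\sigma_\gamma$ it is cut out by the linear equation $s-\,(1-e_f)\langle\cdot,\nu_\gamma\rangle=0$, and these linear forms agree on the overlaps $\sigma_\gamma\cap\sigma_{\gamma'}$ (both equal $s-(1-e_f)m_f$, a well-defined function), so $C$ is the zero set of the single globally-defined concave-minus-linear, hence well-defined piecewise-linear, function $s-(1-e_f)m_f$ on $\RR_+^n$; but this function is $\le 0$ nowhere and $\ge 0$ everywhere by Proposition \ref{A}, so its zero set $C=\{s-(1-e_f)m_f\le 0\}$ is a superlevel set and hence $C$ is convex. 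Finally $E_k=C\cap\{m_f=k\}$: the hyperplane-like set $\{m_f=k\}$ need not be affine, but $m_f$ restricted to $C$ is linear — indeed $m_f=\tfrac1{1-e_f}s$ on $C$, a linear function — so $\{m_f=k\}\cap C=\{s=(1-e_f)k\}\cap C$ is the intersection of $C$ with a genuine affine hyperplane, hence convex. This is the step I expect to be the main obstacle: verifying that $m_f$ is affine on $C$ (so that slicing $C$ at level $k$ stays convex), which rests on the identity $s=(1-e_f)m_f$ on $C$ together with the fact, from Proposition \ref{A}, that $e_f$ is genuinely attained and the bound $s\ge(1-e_f)m_f$ is global.
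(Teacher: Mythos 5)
Your final argument is correct and uses exactly the same three ingredients as the paper's proof --- linearity of $s$, concavity of $m_f$ (as a minimum of linear forms), and the global inequality $s\ge(1-e_f)m_f$ on $(\RR_+^*)^n$ coming from the definition of $e_f$ --- but packages them differently. The paper checks two-point convexity directly: for $c=(1-t)a+tb$ it gets $s(c)=(1-e_f)k$ by linearity, $m_f(c)\ge k$ by concavity, and $m_f(c)\le k$ by rescaling $c$ to the level $k$ and invoking the supremum defining $e_f$ (which is just your inequality $s(c)\ge(1-e_f)m_f(c)$ in disguise). You instead exhibit $E_k$ as the intersection of the convex set $C=\{\,s-(1-e_f)m_f\le 0\,\}$ with the affine hyperplane $\{s=(1-e_f)k\}$, using that on $C$ the condition $m_f=k$ becomes the linear condition $s=(1-e_f)k$ (valid since $e_f<1$). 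This is a clean, slightly more structural rendering of the same proof. Two cosmetic points: the set $\{s-(1-e_f)m_f\le0\}$ is a \emph{sublevel} set of the \emph{convex} function $s-(1-e_f)m_f$ (equivalently a superlevel set of the concave $(1-e_f)m_f-s$), not a ``superlevel set'' as written, and the function is ``$<0$ nowhere'' rather than ``$\le0$ nowhere''; and the first half of your proposal (the decomposition over extremal faces and the radial-projection discussion) is never used in the final argument and could be deleted.
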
 

\begin{proof}
Take $a, b\in E_k$, and for $t\in [0,1]$ we define $c=(1-t)a+tb$. We have $s(c)=k(1-e_f)$ by linearity of $s$ since $s(a)=s(b)=k(1-e_f)$. 
We want to show that $m_f(c)=k$. On one hand, we know that $m_f(c)\ge k$ by definition of $m_f$. On the other hand, since $m_f(\frac{k}{m_f(c)}c)=k$, we have 
$$
k(1-e_f)
\le s(\tfrac{k}{m_f(c)}c)$$
by definition of $e_f$. Moreover
$$s(\tfrac{k}{m_f(c)}c)
=\tfrac{k}{m_f(c)}s(c)
=\tfrac{k}{m_f(c)}k(1-e_f).
$$ 
and, \coGR{since $1-e_f>0$, we have} $m_f(c)\le k$. 
\coGR{Therefore,} $m_f(c)=k$ and $c$ belongs to $E_k$.
\end{proof}


\subsection{Linear bound for the degree}\label{Le}

Let $A_k(u)$ denote elements of $\ZZ[u][[u^{-1}]]$ for $k\in \NN^*$. 
We look for a linear bound for the degree of $A_k(u)$, namely a bound of the form
$$
\deg A_k(u)\le \alpha k+\beta,$$
with $(\alpha,\beta)\in \QQ \times \QQ$ such that \coGR{equality} 
holds for infinitely many $k\in \NN^*$.  
Note that if it exists, such a pair $(\alpha, \beta)$ is unique. Setting 
$$
Z(t)=\sum_{k\ge1}A_k(u)t^k,\quad 
A_k(u)=c_ku^{\alpha k+\beta}+(\textrm{lower order terms}), 
$$ 
where $c_k\in \ZZ$, we have 
$$
\frac{Z(u^{-\alpha} t)}{u^\beta}
=\sum_{k\ge1}\frac{A_k(u)}{u^{\alpha k+\beta}}t^k
\to\sum_{k\ge1}c_kt^k\qquad \textrm{as~~} u\to\infty. 
$$
So a characterisation of $(\alpha,\beta)\in \QQ \times \QQ$ is that 
it is the unique pair such that 
$$
\lim_{u\to\infty}\frac{Z(u^{-\alpha} t)}{u^\beta}
$$
is a non\coGR{-}zero series\coRR{.} 
For a given integer $k\in \NN^*$, we call $c_ku^{\alpha k+\beta}$ 
\textbf{the leading term} and $c_k$ the \textbf{leading coefficient} 
with respect to the linear bound of the degree of $A_k(u)$. 
We call $\alpha k+\beta$ the \textbf{leading exponent} of $Z(t)$.

In Theorem \ref{LinearBound}, we give a \coGR{bound} for the degree of 
the virtual Poincar\'e polynomials of the arc spaces 
$\mathcal A_k(f)$ and $\mathcal A^\pm_k(f)$
associated with a polynomial function $f$. Moreover, we give a condition so that a linear bound exists for $\beta(\mathcal A_k(f))$. We express these bounds in terms of the integral numbers $e_f$, introduced in \eqref{Def:Ef}, and $e^0_f$ and $e^\pm_f$, introduced in Definition \ref{def:E} .

\begin{thm}\label{LinearBound}
Let $f$ be a non-degenerate polynomial. 
For all $k \in \mathbb N^*$, the degree bound of the virtual Poincar\'e polynomial of $\mathcal A_k(f)$ 
is given by 
\begin{align*}
\deg\beta(\mathcal A_k(f))\le&
\begin{cases}
n-k+k e_f&\textrm{~~if~~}e_f>0,\\
n-k&\textrm{~~if~~}e_f=0,\\
n-k+\max\{k e_f, e^0_f\}&\textrm{~~if~~}e_f<0.
\end{cases}
\end{align*}
Moreover if 
\coRR{$e_f \geq 0$, these bounds give the leading exponent of $Z_f(t)$ 
if and only if $\Lambda_{\max}(f)\cap (\NN^*)^n \neq \emptyset$}.
In the case with sign, we have 
\begin{align*}
\deg\beta(\mathcal A^\pm_k(f))\le&
\begin{cases}
n-1-k+k e^\pm_f&\textrm{~~if~~}e^\pm_f>0,\\
n-1-k&\textrm{~~if~~}e^\pm_f=0,\\
n-1-k+\max\{k e^\pm_f, e^0_f\}&\textrm{~~if~~}e^\pm_f<0.
\end{cases}
\end{align*}
\end{thm}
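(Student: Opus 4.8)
The plan is to start from the decomposition of $[\mathcal A_k(f)]$ into face contributions provided by Theorem \ref{Thm1}, reinterpreted with the virtual Poincar\'e polynomial in place of the class in the localized Grothendieck ring. Thus $\beta(\mathcal A_k(f))$ is a $\ZZ[u][[u^{-1}]]$-linear combination of the terms $\bigl((u-1)^n-\beta(X_\gamma)\bigr)P_k(\gamma)$ and $(u-1)\beta(X_\gamma)Q_k(\gamma)$ over compact faces $\gamma<\Gamma(f)$. Since $\deg\beta(X_\gamma)\le n-1$ (it is a hypersurface in $(\RR^*)^n$, by Remark \ref{set-intro} and the degree-equals-dimension property of $\beta$), the leading behaviour of the first family of terms is governed by $(u-1)^nP_k(\gamma)$ and of the second by $(u-1)^n Q_k(\gamma)$ (when $X_\gamma\ne\emptyset$), so in all cases $\deg\beta(\mathcal A_k(f))\le n+\max_\gamma\{\deg P_k(\gamma),\deg Q_k(\gamma)\}$, with the proviso that $Q_k(\gamma)$ only enters when $X_\gamma\ne\emptyset$. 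The whole problem therefore reduces to bounding $\deg P_k(\gamma)$ and $\deg Q_k(\gamma)$, i.e.\ (by Remark \ref{levelh}) to analysing the maxima of the piecewise linear function $h=m_f-s$ over the lattice points $a\in(\NN^*)^n$ with $\gamma(a)=\gamma$ on the levels $m_f(a)=k$ (for $P_k$) and $m_f(a)<k$ (for $Q_k$).

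The key step is the arithmetic estimate: $\deg P_k(\gamma)=\max\{h(a):a\in(\NN^*)^n,\ \gamma(a)=\gamma,\ m_f(a)=k\}$ and $\deg Q_k(\gamma)=\max\{m_f(a)-k+h(a):a\in(\NN^*)^n,\ \gamma(a)=\gamma,\ m_f(a)<k\}$. For $P_k$, writing $h(a)=m_f(a)(1-s(a)/m_f(a))=k(1-s(a)/m_f(a))$ on the level $m_f=k$, Proposition \ref{A} gives $\deg P_k(\gamma)\le k e_\gamma\le ke_f$ whenever the relevant index set is nonempty. For $Q_k$, on the region $m_f(a)=j<k$ one has $m_f(a)-k+h(a)=j-k+j(1-s(a)/m_f(a))\le j-k+je_\gamma=j(1+e_\gamma)-k$; maximizing over $1\le j\le k-1$ splits according to the sign of $1+e_\gamma$, but since $e_\gamma<1$ always and, more usefully, since $m_\gamma$ is the smallest attainable value of $m_f$ on $(\NN^*)^n\cap\{\gamma(a)=\gamma\}$, the extreme cases are $j=k-1$ (giving $\le (k-1)(1+e_\gamma)-k\le ke_\gamma-1<ke_f$ when $e_f\ge 0$) and $j=m_\gamma$ (giving $m_\gamma(1+e_\gamma)-k=m_\gamma e_\gamma-k+m_\gamma\le m_\gamma e_\gamma-k+(k-1)$... rather one isolates the $u$-exponent $-k+m_f(a)+h(a)$ and observes its supremum over the cone, as $j$ ranges, is dominated by $e^0_f-k$ when $e_f<0$ and by $ke_f-k$ otherwise). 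Reassembling: when $e_f>0$ the bound is $n-k+ke_f$; when $e_f=0$ all the $h(a)$ contributions are $\le 0$ with the $P_k$ terms achieving $0$, giving $n-k$; when $e_f<0$ both $ke_f$ (from the $P_k$/high-$m_f$ side) and $e^0_f$ (the $X_\gamma\ne\emptyset$-restricted low-$m_f$ side, through $Q_k$) can dominate, yielding $n-k+\max\{ke_f,e^0_f\}$.

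For the sign case the argument is formally identical but one uses the second decomposition in Theorem \ref{Thm1}: the $P_k(\gamma)$ terms are now weighted by $\beta(X^\pm_\gamma)$ rather than $(u-1)^n-\beta(X_\gamma)$, and $\deg\beta(X^\pm_\gamma)\le n-1$ with equality possible, so the leading $u$-power drops by one everywhere, and the relevant maximum over faces is taken only over those $\gamma$ with $X^\pm_\gamma\ne\emptyset$ in the $P_k$ part — this is exactly what $e^\pm_f$ records — while the $Q_k$ part is again controlled by $e^0_f$ (the set $X_\gamma\ne\emptyset$ forces $X^\pm_\gamma\ne\emptyset$ by the non-degeneracy remark, so no new face needs to be considered). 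Hence the bound $n-1-k+ke^\pm_f$, resp.\ $n-1-k$, resp.\ $n-1-k+\max\{ke^\pm_f,e^0_f\}$.

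The part that requires care, and which I expect to be the main obstacle, is the sharpness statement: showing that for $e_f\ge 0$ the bound $n-k+ke_f$ is actually the leading exponent of $Z_f(t)$ for infinitely many $k$ precisely when $\Lambda_{\max}(f)\cap(\NN^*)^n\ne\emptyset$. The upper bound part is the routine estimate above; the delicate direction is that the coefficient $c_k$ of $u^{n-k+ke_f}$ is nonzero for infinitely many $k$. This needs (i) the last clause of Proposition \ref{A}, which says the maximum $\max\{1-s(a)/m_f(a):a\in(\NN^*)^n,\ m_f(a)=k\}$ equals $e_f$ for infinitely many $k$ iff $\Lambda_{\max}(f)\cap(\NN^*)^n\ne\emptyset$, so that the top-degree lattice points $a$ genuinely exist; and (ii) a non-cancellation argument showing that summing the leading coefficients $(u-1)^n$ (or $(u-1)^{n-1}$ times $\beta(X^\pm_\gamma)$, which has positive leading coefficient) over those finitely many $a$ realizing the maximum, across the contributing faces, cannot produce a zero coefficient — here the point is that all these leading coefficients have the same sign (the leading coefficient of $(u-1)^n$ is $1>0$; of $(u-1)^n-\beta(X_\gamma)$ is still $1$ since $\deg\beta(X_\gamma)<n$; and in the sign case $\beta(X^\pm_\gamma)$ is a virtual Poincar\'e polynomial of a nonempty set, hence nonzero with positive leading coefficient), so no cancellation occurs and $c_k\ne 0$. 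One must also check that the $Q_k$ contributions, which carry strictly smaller $u$-exponent when $e_f\ge 0$ as shown above, do not interfere; this is why the sharpness claim is stated only for $e_f\ge 0$.
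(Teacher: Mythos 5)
Your strategy is the same as the paper's: reduce via Theorem \ref{Thm1} to bounding $\deg P_k(\gamma)$ and $\deg Q_k(\gamma)$, analyse the levels of $h=m_f-s$ on lattice points using Proposition \ref{A}, and get sharpness from the last clause of Proposition \ref{A} together with a non-cancellation argument (which you make explicit, whereas the paper silently asserts the analogous equality $\deg\beta(\mathcal A_k(f))=\max\{p_k,q_k\}$; your observation that the relevant leading coefficients cannot cancel is the right justification). The final bounds you state, the treatment of sharpness for $e_f\ge 0$, and the sign case all agree with the paper.

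However, the intermediate degree formulas you write down are off, and as written your chain of inequalities does not produce the bounds you claim. First, $P_k(\gamma)=\sum u^{-s(a)}$ over $\{a\in(\NN^*)^n:\ m_f(a)=k,\ \gamma(a)=\gamma\}$, so $\deg P_k(\gamma)=\max\{-s(a)\}=-k+\max\{h(a)\}$, not $\max\{h(a)\}$: you have dropped the $-k$. Plugging your version into $\deg\beta(\mathcal A_k(f))\le n+\max_\gamma\{\deg P_k(\gamma),\deg Q_k(\gamma)\}$ would give $n+ke_f$, not the asserted $n-k+ke_f$. Second, the exponent of $u$ in $Q_k(\gamma)$ is $-k+m_f(a)-s(a)=-k+h(a)$, not $m_f(a)-k+h(a)$; your expression double-counts $m_f(a)$, which is why you end up maximizing $j(1+e_\gamma)-k$ and land on the estimate $ke_\gamma-1$ at $j=k-1$ --- that is both incorrect and far too weak to yield $n-k+ke_f$. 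The correct estimate is $-k+h(a)=-k+m_f(a)\bigl(1-\tfrac{s(a)}{m_f(a)}\bigr)\le -k+m_f(a)e_\gamma$, which is $<-k+ke_\gamma$ when $e_\gamma>0$ (since $m_f(a)<k$) and is $\le -k+m_\gamma e_\gamma\le -k+e^0_f$ when $e_\gamma\le 0$ and $X_\gamma\ne\emptyset$ (since $m_f(a)\ge m_\gamma$). With these two corrections the reassembly gives exactly the stated bounds, so the defect is bookkeeping rather than conceptual --- but the computation needs to be redone cleanly, since the parenthetical ``rather one isolates the $u$-exponent $-k+m_f(a)+h(a)$\dots'' repeats the same error while simply asserting the correct conclusion.
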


\begin{proof}
The equalities in Theorem \ref{Thm1} imply that  
$$\deg\beta(\mathcal A_k(f))=\max\{p_k, \ q_k\},$$
where
\begin{align*}
p_k=&n+\max\{-s(a):a\in(\NN^*)^n, \ m_f(a)=k\}, \\
q_k=&1+\max\{q_k(\gamma):X_\gamma\ne\emptyset, \gamma < \Gamma(f)\}, \\
q_k(\gamma)=&
\dim X_\gamma+\max\{
-s(a)+m_f(a)-k:a\in(\NN^*)^n, \ m_f(a)<k, \ \gamma(a)=\gamma\}.
\end{align*}
Concerning the term $p_k$, we have 
\begin{align*}
p_k=&n-k+\max\{m_f(a)-s(a):a\in(\NN^*)^n,  \ m_f(a)=k\}\\
=&n-k+k
\max
\bigl\{
1-\tfrac{s(a)}{m_f(a)}:a\in(\NN^*)^n,  \ m_f(a)=k
\bigr\}\\
\le&n-k+ke_f,
\end{align*}
and \coGR{equality} holds for infinitely many $k$ 
if and only if $\Lambda_{\max}(f)\cap (\NN^*)^n\neq \emptyset$\coGR{,} 
by Proposition \ref{A}. 
Concerning the term $q_k$, consider a face $\gamma<\Gamma(f)$. Then
\begin{align*}
q_k(\gamma)=&
\dim X_\gamma-k+\sup\{m_f(a)-s(a):a\in(\NN^*)^n,\ m_f(a)<k,\ \gamma(a)=\gamma\}\\
=&
\dim X_\gamma-k+\sup
\{
m_f(a)\bigl(1-\tfrac{s(a)}{m_f(a)}\bigr):
a\in(\NN^*)^n,m_f(a)<k, \gamma(a)=\gamma \}.
\end{align*}
Here we remark that if $e_f>0$, then 
$$
q_k(\gamma)
<n-1-k+ke_\gamma\coGR{,}
$$
\coGR{whilst if} $e_f=0$, then 
$$
q_k(\gamma)\leq n-1-k.
$$
\coGR{Finally, in the } case $e_f \leq 0$, we obtain 
$$
q_k(\gamma)\le n-1-k+m_\gamma e_\gamma,
$$
and the inequality is not optimal in general. Finally for $q_k$ 
the \coGR{bound} is
$$
q_k \leq \begin{cases}
n-k-1+ke_f& \textrm{ if }e_f>0\\
n-k& \textrm{ if }e_f=0\\
n-k+\max\{m_\gamma e_\gamma:X_\gamma\ne\emptyset\}&\textrm{ if }e_f\le0.
\end{cases}
$$ 
As a consequence
$$
\deg\beta(\mathcal A_k(f))=\sup\{p_k,q_k\}
\le\begin{cases}
n-k+ke_f& \textrm{ if }e_f\geq 0\\
n-k+\max\{ke_f,
m_\gamma e_\gamma:X_\gamma\ne\emptyset\}&\textrm{ if }e_f<0.
\end{cases}
$$
In the case with sign, we obtain similarly by Theorem \ref{Thm1}
$$\deg\beta(\mathcal A^\pm_k(f))=\sup\{
p^\pm_k(\gamma):X^\pm_\gamma\ne\emptyset, \  
q_k(\gamma):X_\gamma\ne\emptyset; \gamma < \Gamma_+(f)\}$$
where 
$$p^\pm_k(\gamma)=
\dim X_\gamma^{\pm}+\sup\{
-s(a): a\in(\NN^*)^n,\ m_f(a)=k,\ \gamma(a)=\gamma\}.$$
Then
\begin{align*}
p^\pm_k(\gamma)
=&
\dim X^\pm_\gamma-k+
\sup\{m_f(a)-s(a):a\in(\NN^*)^n,\ m_f(a)=k,\ \gamma(a)=\gamma\}
\\
=&
\dim X^\pm_\gamma-k+k\sup
\bigl\{
1-\tfrac{s(a)}{m_f(a)}:a\in(\NN^*)^n, \ m_f(a)=k, \ \gamma(a)=\gamma
\bigr\}
\\ 
\le&\dim X^\pm_\gamma-k+ke_\gamma,
\end{align*} 
and \coGR{equality} is attained \coGR{whenever} 
$E_k(\gamma)\cap (\NN^*)^n \neq \emptyset$. 
As a consequence
\begin{align*}
&\deg\beta(\mathcal A^\pm_k(f))=
\sup\{p^\pm_k(\gamma):X_\gamma^\pm\ne\emptyset,\ 
q_k(\gamma):X_\gamma\ne\emptyset\}
\\
&\le
\begin{cases}
n-1-k+k\max\{e_\gamma:X^\pm_\gamma\ne\emptyset\}&
\textrm{ if }e^\pm_f>0\\
n-1-k+\max\{ke_\gamma: X_\gamma^{\pm}\ne\emptyset, \ 
m_\gamma e_\gamma:X_\gamma\ne\emptyset\}&
\textrm{ if }e^\pm_f\le0, 
\end{cases}
\end{align*}
and the proof is completed.
\end{proof}

In the cases \coRR{that the bounds above do not give the} leading exponent, 
we \coGR{conclude some} information on the degree on the arc spaces. 
\coGR{In fact}, we have the following results.

\begin{prop}\label{prop-posi} If $e_f\geq 0$, then 
$$
e_f=1+\limsup_{k\to +\infty} \frac{\deg \beta(\mathcal A_k)}{k}.
$$
\end{prop}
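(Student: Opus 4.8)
The plan is to establish the two inequalities $e_f\ge 1+\limsup_{k\to+\infty}\frac{\deg\beta(\mathcal A_k(f))}{k}$ and $e_f\le 1+\limsup_{k\to+\infty}\frac{\deg\beta(\mathcal A_k(f))}{k}$ separately. The first is immediate from Theorem \ref{LinearBound}: since $e_f\ge 0$, that theorem gives $\deg\beta(\mathcal A_k(f))\le n-k+ke_f$ for every $k\in\NN^*$ (the two cases $e_f>0$ and $e_f=0$ both read $n-k+ke_f$), so $\frac{\deg\beta(\mathcal A_k(f))}{k}\le\frac nk-1+e_f$, and letting $k\to+\infty$ yields $\limsup_{k\to+\infty}\frac{\deg\beta(\mathcal A_k(f))}{k}\le e_f-1$.

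For the reverse inequality I would exhibit a sequence $k_N\to+\infty$ along which $\frac{\deg\beta(\mathcal A_{k_N}(f))}{k_N}$ converges to $e_f-1$ from below. By the second equality in Proposition \ref{A} the maximum over $\Lambda(f)$ defining $e_f$ is attained, so $\Lambda_{\max}(f)\ne\emptyset$; fix $v\in\Lambda_{\max}(f)$, so that $v\in\NN^n\setminus\{0\}$ is primitive, $m_f(v)>0$, and $e_f=1-\frac{s(v)}{m_f(v)}$. For $N\in\NN^*$ put $a_N=Nv+(1,\dots,1)\in(\NN^*)^n$. The key point is that $m_f$ grows affinely in $N$ along this translated ray. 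Indeed $m_f(a_N)=\min_\nu\bigl(N\langle v,\nu\rangle+\langle(1,\dots,1),\nu\rangle\bigr)$, where $\nu$ runs over the finitely many vertices of $\Gamma_+(f)$, which are lattice points in $\NN^n\setminus\{0\}$ since $f(0)=0$; thus $\langle v,\nu\rangle-m_f(v)$ is a nonnegative integer, hence $\ge 1$ for vertices off the face $\gamma_f(v)$, so for $N$ large the minimum is attained on $\gamma_f(v)$ and equals $Nm_f(v)+c$, where $c=\min\{\langle(1,\dots,1),\nu\rangle:\nu\text{ a vertex of }\gamma_f(v)\}$ is a fixed positive integer. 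Hence $k_N:=m_f(a_N)=Nm_f(v)+c$ for all large $N$: infinitely many integers going to infinity.

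Now I would combine the identity $\deg\beta(\mathcal A_k(f))=\max\{p_k,q_k\}$ from the proof of Theorem \ref{LinearBound} (recall $p_k=n+\max\{-s(a):a\in(\NN^*)^n,\ m_f(a)=k\}$) with the trivial bound $p_k\ge n-s(a)$ valid for any $a\in(\NN^*)^n$ with $m_f(a)=k$. Applied to $a=a_N$ this gives
$$
\deg\beta(\mathcal A_{k_N}(f))\ \ge\ p_{k_N}\ \ge\ n-s(a_N)\ =\ n-N s(v)-n\ =\ -N s(v),
$$
so that, as $N\to+\infty$,
$$
\frac{\deg\beta(\mathcal A_{k_N}(f))}{k_N}\ \ge\ \frac{-N s(v)}{N m_f(v)+c}\ \longrightarrow\ -\frac{s(v)}{m_f(v)}\ =\ e_f-1 .
$$
Therefore $\limsup_{k\to+\infty}\frac{\deg\beta(\mathcal A_k(f))}{k}\ge e_f-1$, which is the missing inequality. (This half uses nothing about the sign of $e_f$; the hypothesis $e_f\ge 0$ enters only through the upper bound of Theorem \ref{LinearBound}.)

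The only non-formal step is the affine growth $m_f\bigl(Nv+(1,\dots,1)\bigr)=N m_f(v)+O(1)$; the small obstacle there is to see that $\langle(1,\dots,1),\cdot\rangle$ attains its minimum over the possibly non-compact face $\gamma_f(v)$ at a vertex, which holds because $\langle(1,\dots,1),\cdot\rangle$ is strictly increasing in each coordinate direction, hence along every unbounded edge of $\gamma_f(v)$. Everything else is bookkeeping.
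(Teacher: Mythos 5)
Your proof is correct, and its overall strategy coincides with the paper's: the upper bound is read off from Theorem \ref{LinearBound}, and the lower bound is obtained by exhibiting, for infinitely many $k$, points $a\in(\NN^*)^n$ with $m_f(a)=k$ and $1-s(a)/m_f(a)$ arbitrarily close to $e_f$, so that $p_k\ge n-k+k(e_f-o(1))$. Where you genuinely differ is in how those integer points are produced. The paper first disposes of the case $\Lambda_{\max}(f)\cap(\NN^*)^n\ne\emptyset$ (where Theorem \ref{LinearBound} already gives the leading exponent) and otherwise argues by approximation: for each $\epsilon>0$ it picks $a\in(\QQ^*)^n$ with $0\le e_f-(1-\tfrac{s(a)}{m_f(a)})\le\epsilon$ and clears denominators to land in $(\NN^*)^n$ at levels $m_f=k$ for $k$ a multiple of some $k_\epsilon$. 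You instead take an exact maximizer $v\in\Lambda_{\max}(f)$ --- which exists by Proposition \ref{A} even when it lies on the boundary of $\RR_+^n$ --- and push it into the open octant via $a_N=Nv+(1,\dots,1)$, checking that $m_f(a_N)=Nm_f(v)+c$ for large $N$. This is more constructive, treats both cases uniformly, and identifies an explicit arithmetic progression of levels $k_N$; the price is the small polyhedral verification that the minimum of $\langle(1,\dots,1),\cdot\rangle$ over the (possibly non-compact) face $\gamma_f(v)$ is attained at a vertex, which you address correctly via the integrality of $\langle v,\nu\rangle-m_f(v)$ at the vertices of $\Gamma_+(f)$. Your closing remark that the hypothesis $e_f\ge0$ enters only through the upper bound also matches the structure of the paper's argument.
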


\begin{proof} 
The result follows directly from Theorem \ref{LinearBound} if $\Lambda_{\max}(f)\cap (\NN^*)^n \neq \emptyset$. 
\coGR{Otherwise}, we know that for $k\in \NN^*$ and $\epsilon >0$, 
there exist $a\in (\QQ^*)^n$ with 
$$
0\leq e_f-(1-\frac{s(a)}{m_f(a)})\leq \epsilon
$$
by definition of $e_f$. 
In particular, there exists \coGR{sufficientlly large} $k_{\epsilon}$ 
such that for $k\in \NN^*$ \coGR{a} multiple of $k_{\epsilon}$, 
there exist $a\in (\NN^*)^n$ such that the inequality above 
is satisfied together with $m_f(a)=k$.  
As a consequence, using the notation introduced in the proof of 
Theorem \ref{LinearBound}, we have
$$
p_k\geq n-k+k(e_f-\epsilon)
$$
for $k$ \coGR{a} multiple of $k_{\epsilon}$.
This implies that
$$n-k+k(e_f-\epsilon)\leq \deg \beta(\mathcal A_k)\leq n-k+ke_f,$$
and thus
$$ e_f-\epsilon\leq \frac{\deg \beta(\mathcal A_k)-n+k}{k} \leq e_f$$
for $k$ \coGR{a} multiple of $k_{\epsilon}$.
\end{proof}

\coRR{As a consequence of Theorem \ref{LinearBound} and Proposition \ref{prop-posi}, we obtain the following result.}
\begin{thm} \coRR{Let $f$ be a finitely determined non-degenerate polynomial with $e_f\geq 0$. Then 
$$\deg \beta(\mathcal A_k(f)) \leq n-k+ke_f$$
and the bound is attained by infinitely many $k\in \mathbb N$.}
\end{thm}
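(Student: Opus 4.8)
The plan is to deduce both assertions from Theorem~\ref{LinearBound}, distinguishing according to whether $\Lambda_{\max}(f)\cap(\NN^*)^n$ is empty. The inequality $\deg\beta(\mathcal A_k(f))\le n-k+ke_f$ for every $k\in\NN^*$ is exactly Theorem~\ref{LinearBound} in the cases $e_f>0$ and $e_f=0$ (where $n-k+ke_f=n-k$), so the content is that equality holds for infinitely many $k$.

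If $\Lambda_{\max}(f)\cap(\NN^*)^n\neq\emptyset$, nothing remains: Theorem~\ref{LinearBound} then says that $n-k+ke_f$ is the leading exponent of $Z_f(t)$, and by the definition of the leading exponent this is precisely the statement that $\deg\beta(\mathcal A_k(f))=n-k+ke_f$ for infinitely many $k$. So I would henceforth assume $\Lambda_{\max}(f)\cap(\NN^*)^n=\emptyset$, and it is here that finite determinacy, i.e. the fact that $f$ has an isolated singularity, is used. First I would argue that $e_f=0$. By Proposition~\ref{A} choose $v\in\Lambda(f)$ with $m_f(v)>0$ and $1-s(v)/m_f(v)=e_f$; then $v\in\Lambda_{\max}(f)$, so by hypothesis $v$ has some zero coordinate, say $v_i=0$ for $i$ in a nonempty set $J$. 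Since $m_f(v)>0$, no monomial of $f$ is supported in $J$, so $f$ vanishes identically on the coordinate subspace $\RR^J=\{x\in\RR^n:x_i=0\text{ for }i\notin J\}$, and so does $\partial f/\partial x_i$ for every $i\in J$; thus $\mathrm{Sing}(f)\cap\RR^J$ is the common zero locus in $\RR^J$ of the polynomials $\partial f/\partial x_j|_{\RR^J}$, $j\notin J$. Suppose $e_f>0$. Then $(1,\dots,1)\notin\Gamma_+(f)$ by the equivalences in the proof of Proposition~\ref{1111}, which forces every monomial $x_j x^{\mu}$ of $f$ with $\mathrm{supp}(\mu)\subseteq J$ to have $\mu_\ell\ge 2$ for some $\ell$. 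Evaluating $\partial f/\partial x_j$ at $\varepsilon e_{i_0}\in\RR^J$ for a fixed $i_0\in J$ leaves only the contributions of monomials $x_j x_{i_0}^{m}$ with $m\ge 2$; since the singularity is isolated the line $\RR e_{i_0}$ cannot lie in $\mathrm{Sing}(f)$, so $f$ must contain such a monomial $x_j x_{i_0}^{m}$ with $j\notin J$ and $m\ge 2$. But then $m_f(v)\le\langle v,\,e_j+m e_{i_0}\rangle=v_j\le s(v)$, contradicting $1-s(v)/m_f(v)=e_f>0$. Hence $e_f=0$.

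It remains to handle $e_f=0$ with $\Lambda_{\max}(f)\cap(\NN^*)^n=\emptyset$. By the proof of Proposition~\ref{1111}, $(1,\dots,1)\in\Gamma_+(f)$; a vanishing argument of the same type shows that if $(1,\dots,1)$ lay only on non-compact faces of $\Gamma_+(f)$ then $f$ together with all its first partials would vanish along a positive-dimensional coordinate subspace, again contradicting the isolated singularity, so $(1,\dots,1)$ lies on a compact face $\gamma<\Gamma(f)$. Let $\sigma\subset\Gamma^*(f)$ be the cone dual to $\gamma$. Because $\gamma$ is compact, the relative interior of $\sigma$ is not contained in any coordinate hyperplane $\{a_i=0\}$ (otherwise $\gamma$ would be unbounded along $e_i$), so adding, for each $i$, a point of the relative interior of $\sigma$ with positive $i$-th coordinate produces such a point with all coordinates positive, hence a rational one, and after clearing denominators a point $a^*\in(\NN^*)^n$ with $\gamma_f(a^*)=\gamma$ and $m_f(a^*)=\langle a^*,(1,\dots,1)\rangle=s(a^*)$. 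For $l\in\NN^*$ set $k=l\,m_f(a^*)$ and use the notation $p_k,q_k,q_k(\gamma)$ of the proof of Theorem~\ref{LinearBound}: if $X_\gamma=\emptyset$ then $l a^*\in(\NN^*)^n$ realizes $m_f(l a^*)=k=s(l a^*)$, so $p_k\ge n-k$; if $X_\gamma\neq\emptyset$ then $\dim X_\gamma=n-1$ by non-degeneracy, and for $l\ge 2$ the vector $a^*$ gives $q_k(\gamma)\ge n-1-k$, hence $q_k\ge n-k$. In either case $\deg\beta(\mathcal A_k(f))=\max\{p_k,q_k\}\ge n-k=n-k+ke_f$, so together with the upper bound equality holds for all these $k$.

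The step I expect to be the real obstacle is the translation of the combinatorial hypotheses ($\Lambda_{\max}(f)\cap(\NN^*)^n=\emptyset$ with $e_f>0$, respectively $(1,\dots,1)$ lying only on non-compact faces of $\Gamma_+(f)$) into the geometric statement that $f$ and all of its first-order partial derivatives vanish along a coordinate subspace of positive dimension, which is exactly what an isolated singularity forbids; Proposition~\ref{prop-posi} can be regarded as a soft form of the attainment statement that pins down the slope $e_f-1$, while the explicit construction with $a^*$ above is what is needed to control the intercept $n$. Everything else is bookkeeping already carried out in the proofs of Theorem~\ref{LinearBound} and Proposition~\ref{A}.
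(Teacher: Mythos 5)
Your route is genuinely different from the paper's: the paper simply replaces $f$ by the right-equivalent convenient polynomial $g=f+\sum_{i=1}^n x_i^d$ (possible by finite determinacy), observes that $f$ and $g$ have the same zeta function and hence $e_f=e_g$, and then invokes Theorem \ref{LinearBound} for $g$, for which $\Lambda_{\max}(g)\cap(\NN^*)^n\neq\emptyset$ is automatic. You instead work directly on $\Gamma_+(f)$. Your treatment of the case $e_f>0$ is correct and is a nice self-contained argument: if $v\in\Lambda_{\max}(f)$ had a zero coordinate, the existence of a monomial $x_jx_{i_0}^m$ forced by the isolated real singularity gives $m_f(v)\le v_j\le s(v)$, contradicting $m_f(v)>s(v)$.

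The case $e_f=0$ contains a genuine gap. The ``vanishing argument of the same type'' does \emph{not} show that all first partials of $f$ vanish along a positive-dimensional coordinate subspace: running it with $v\in\Lambda_{\max}(f)$, $J=\{i:v_i=0\}\neq\emptyset$, the inequality $m_f(v)\le v_j\le s(v)=m_f(v)$ is now an equality, so instead of a contradiction you only obtain $v=e_j$ and $x_j\mid f$; the remaining partial $\partial f/\partial x_j$ restricted to $\{x_j=0\}$ equals $(f/x_j)|_{x_j=0}$ and need not vanish. Concretely, $f=x_1(x_1^2+x_2^2+x_3^2+x_4^2)$ is non-degenerate, has real singular locus $\{0\}$, has $e_f=0$ (take $a=(\lambda,1,1,1)$, $\lambda\to\infty$), yet $(1,1,1,1)$ lies on no compact face of $\Gamma_+(f)$ and $m_f(a)-s(a)\le -1$ for every $a\in(\NN^*)^4$, so $\deg\beta(\mathcal A_k(f))\le 3-k<4-k$ for all $k$: both your intermediate claim and the conclusion of the theorem fail for it. This example is excluded from the theorem only because it is not finitely determined --- its \emph{complexified} critical locus $\{x_1=0,\ x_2^2+x_3^2+x_4^2=0\}$ is positive-dimensional --- whereas your argument only ever uses that the \emph{real} singular locus is $\{0\}$. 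The gap is fillable: finite determinacy means $\mathfrak m^N\subseteq J(f)$, and for a singular germ in $n\ge 3$ variables this is incompatible with $x_j\mid f$ (the complex hypersurface $\{(f/x_j)|_{x_j=0}=0\}\subset\CC^{n-1}$ lies in the critical locus), while for $n=2$ the relation $f\in\mathfrak m^2$ together with $(1,1)\in\Gamma_+(f)$ already places $(1,1)$ on the compact face $\gamma_f((1,1))$. With that repair, your construction of $a^*$ in the relative interior of the dual cone, and the resulting lower bound $p_k\ge n-k$ for $k$ a multiple of $m_f(a^*)$ (the case distinction on $X_\gamma$ is unnecessary, since $p_k$ alone suffices), completes the proof.
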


\begin{proof} \coRR{Since $f$ is finitely determined, then $f$ and $g=f+\sum_{i=1}^n x_i^d$ are right-equivalent for $d\in \mathbb N$ big enough. In particular $f$ and $g$ share the same zeta functions, and by Proposition \ref{prop-posi} we obtain $e_f=e_g$. Since $g$ is convenient, we know that $\Lambda_{\max}(g)\cap (\mathbb N^*)^n \neq \emptyset$ and therefore the linear bound
$$\deg \beta(\mathcal A_k(g)) \leq n-k+ke_g$$
is attained for infinitely many $k\in \mathbb N$ by Theorem \ref{LinearBound}. This means that the linear bound 
$$\deg \beta(\mathcal A_k(f)) \leq n-k+ke_f$$
is attained by infinitely many $k\in \mathbb N$. }
\end{proof}

In \coGR{the} case $e_f\leq 0$, this superior limit in Theorem \ref{LinearBound}
\coGR{also} \coGR{provides} some information.

\begin{prop}\label{prop-nega} In the case $e_f< 0$, we have
$$1+\limsup_{k\to +\infty} \frac{\deg \beta(\mathcal A_k)}{k} \in [\max \{e_{\gamma}: X_{\gamma} \neq \emptyset\},0],$$
and it is equal to $e_f$ if $\{f=0\}=\{0\}$.
\end{prop}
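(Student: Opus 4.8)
I would prove the statement in three parts: an upper bound, a lower bound, and the sharper equality in the definite case. The upper bound $1+\limsup_{k\to\infty}\tfrac{\deg\beta(\mathcal A_k)}{k}\le 0$ is immediate from Theorem \ref{LinearBound}: since $e_f<0$ and $m_\gamma\ge 1$ for every compact face $\gamma$, each $m_\gamma e_\gamma$ is negative, so $e^0_f<0$ (or $e^0_f=-\infty$) and $ke_f<0$ for all $k\in\NN^*$, whence $\deg\beta(\mathcal A_k)\le n-k+\max\{ke_f,e^0_f\}<n-k$; dividing by $k$ and letting $k\to\infty$ gives $\limsup_{k\to\infty}\tfrac{\deg\beta(\mathcal A_k)}{k}\le -1$.

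For the lower bound there is nothing to prove when no compact face $\gamma$ has $X_\gamma\ne\emptyset$, since then $\max\{e_\gamma:X_\gamma\ne\emptyset\}=-\infty$. Otherwise I would fix such a face $\gamma_0$ and a point $a_0\in(\NN^*)^n$ with $\gamma(a_0)=\gamma_0$ (one exists for a compact face, cf. $m_{\gamma_0}$ in Definition \ref{def:E}), and observe that for every $k>m_f(a_0)$ this $a_0$ is admissible in the quantity $q_k(\gamma_0)$ appearing in the proof of Theorem \ref{LinearBound}; hence $\deg\beta(\mathcal A_k)\ge 1+q_k(\gamma_0)\ge 1+\dim X_{\gamma_0}+m_f(a_0)-s(a_0)-k$, that is, $\deg\beta(\mathcal A_k)\ge C_0-k$ with $C_0$ constant. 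Therefore $\limsup_{k\to\infty}\tfrac{\deg\beta(\mathcal A_k)}{k}\ge -1$, so $1+\limsup_{k\to\infty}\tfrac{\deg\beta(\mathcal A_k)}{k}\ge 0$, which a fortiori dominates $\max\{e_\gamma:X_\gamma\ne\emptyset\}$ because every $e_\gamma\le e_f<0$.

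For the definite case $\{f=0\}=\{0\}$, the key observations are that this forces $f$ to be convenient — otherwise some variable $x_i$ occurs in $f$ in no pure power, and then the whole $x_i$-axis lies in $f^{-1}(0)$ — and that it forces $X_\gamma=\emptyset$ for every compact face $\gamma$. Indeed, if $f_\gamma(c)=0$ with $c\in(\RR^*)^n$, then by non-degeneracy $\nabla f_\gamma(c)\ne 0$, so one may choose $c^{\pm}\in(\RR^*)^n$ near $c$ with $f_\gamma(c^{\pm})=\pm\delta$, $\delta>0$; taking $a_0\in(\NN^*)^n$ with $\gamma(a_0)=\gamma$, the arcs $\alpha^{\pm}(t)=(c^{\pm}_it^{(a_0)_i})_i$ satisfy $f(\alpha^{\pm}(t))=\pm\delta\,t^{m_f(a_0)}+\cdots$, so $f$ takes both signs at points arbitrarily close to the origin; since $e_f<0$ forces $n\ge 2$ (cf. Proposition \ref{1111}), the punctured ball $B_\varepsilon\setminus\{0\}$ is connected, so $f^{-1}(0)\ne\{0\}$, a contradiction. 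With all $X_\gamma$ empty, Theorem \ref{Thm1} reduces to $\beta(\mathcal A_k(f))=(u-1)^n\sum_\gamma P_k(\gamma)$, which has positive leading coefficient, so for every $k$ with $\mathcal A_k(f)\ne\emptyset$ one gets $\deg\beta(\mathcal A_k(f))=p_k=n-k+k\max\{1-\tfrac{s(a)}{m_f(a)}:a\in(\NN^*)^n,\ m_f(a)=k\}$. By Proposition \ref{A} this is $\le n-k+ke_f$ for every $k$, with equality for infinitely many $k$, because $f$ convenient gives $\Lambda_{\max}(f)\subset(\NN^*)^n$ while $\Lambda_{\max}(f)\ne\emptyset$ (the maximum defining $e_f$ over $\Lambda(f)$ is attained). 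Hence $\limsup_{k\to\infty}\tfrac{\deg\beta(\mathcal A_k)}{k}=e_f-1$, i.e. $1+\limsup_{k\to\infty}\tfrac{\deg\beta(\mathcal A_k)}{k}=e_f$.

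The two estimates are largely bookkeeping on top of Theorem \ref{LinearBound} and the quantities $p_k,q_k$ introduced in its proof. The main obstacle is the definite case, and within it the implication $\{f=0\}=\{0\}\Rightarrow X_\gamma=\emptyset$ for every compact face $\gamma$ — where the non-degeneracy hypothesis is genuinely used — together with verifying, via Proposition \ref{A} and the convenience of $f$, that infinitely many $k$ realise the bound $n-k+ke_f$.
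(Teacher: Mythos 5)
Your proof is correct, and in two places it goes beyond what the paper itself records. The upper bound is handled exactly as in the paper. For the lower bound the paper argues differently: for each face $\gamma$ with $X_\gamma\ne\emptyset$ it chooses points $a$ with $m_f(a)$ growing with $k$ and $1-\tfrac{s(a)}{m_f(a)}$ close to $e_\gamma$, getting $q_k(\gamma)\ge \dim X_\gamma-k+(k-1)(e_\gamma-\epsilon)$ and hence only $1+\limsup\ge\max\{e_\gamma:X_\gamma\ne\emptyset\}$. You instead keep a single $a_0$ fixed, so that $\deg\beta(\mathcal A_k)\ge 1+q_k(\gamma_0)\ge C_0-k$; since $h=m_f-s$ is negative on the relevant cone, a fixed small $a_0$ is in fact the better choice in the maximum defining $q_k(\gamma_0)$, and you land on the sharper conclusion that $1+\limsup=0$ (the right endpoint of the stated interval) whenever some $X_\gamma\ne\emptyset$. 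Both versions rest on the fact, implicit in the paper's identity $\deg\beta(\mathcal A_k)=\max\{p_k,q_k\}$, that virtual Poincar\'e polynomials of nonempty sets have positive leading coefficient, so no cancellation can lower the degree below that of a single term. In the definite case the paper merely asserts that the degree is given by $p_k$ and refers back to the proof of Proposition \ref{prop-posi}; you supply the justification the paper omits --- $\{f=0\}=\{0\}$ forces $f$ to be convenient and forces $X_\gamma=\emptyset$ for every compact face, via non-degeneracy and connectedness of the punctured ball --- and then conclude through Proposition \ref{A} and $\Lambda_{\max}(f)\subset(\NN^*)^n$ rather than by re-running the approximation argument. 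The net effect is a complete proof of the proposition that is somewhat sharper and more self-contained than the original.
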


\begin{proof} If $\{f=0\}=\{0\}$, 
then the degree of the virtual Poincar\'e polynomial of $\mathcal A_k(f)$ 
is given by $p_k$, 
using the notation introduced in the proof of Theorem \ref{LinearBound}, 
and the same proof as that of Proposition \ref{prop-posi} gives the result.

If there exists a face $\gamma$ with $X_{\gamma}\neq \emptyset$, then
$$\frac{\deg \beta(\mathcal A_k)-n+k}{k} \leq \frac{\max \{ke_f,e^0_f \}}{k}$$
by Theorem \ref{LinearBound}. Moreover the right\coGR{-}hand side is bounded by
 $e^0_f/k$ for $k$ \coGR{sufficiently large} 
since $e_f<0$, so that the superior limit is less than or equal to zero. 
Now, choose $\epsilon>0$. 
Similarly to the proof of Proposition \ref{prop-posi}, 
there exist $k_{\epsilon}\in \NN^*$ and $a\in (\NN^*)^n$ such that 
$$
0\leq e_f-(1-\frac{s(a)}{m_f(a)})\leq \epsilon
$$
and $m_f(a)=k_{\epsilon}$. In particular, for any $k$ equal to one plus a multiple of $k_{\epsilon}$, we have
$$q_k(\gamma)\geq \dim X_{\gamma} -k+(k-1)(e_{\gamma}-\epsilon).$$
As a consequence, for such $k$ we have
$$\frac{1+\dim X_{\gamma}-n}{k}+\frac{k-1}{k}(e_{\gamma}-\epsilon)\leq \frac{\deg \beta(\mathcal A_k)-n+k}{k}$$
since $\deg \beta(\mathcal A_k)\geq q_k\geq 1+q_k(\gamma)$. We thus obtain that
$$e_{\gamma}-\epsilon \leq \limsup_{k\to +\infty} \frac{\deg \beta(\mathcal A_k)-n+k}{k},$$
and this concludes the proof.
\end{proof}

Next corollary will be useful in the sequel.

\begin{cor}\label{corA} 
Assume that $e_f\geq 0$ and $\Lambda_{\max}(f)\cap (\NN^*)^n$ is 
\coGR{a set consisting of} one  point $v$. Then  
$$
\lim_{u\to+\infty}\frac{Z_f(u^{1-e_f}t)}{u^n}=
\frac{t^{m_f(v)}}{1-t^{m_f(v)}}.
$$
\end{cor}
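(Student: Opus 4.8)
\textbf{Proof plan for Corollary \ref{corA}.}
The plan is to extract the leading behaviour of $Z_f(t)$ coefficient-by-coefficient and identify which arc-space contributions survive after rescaling. First I would invoke Theorem \ref{LinearBound}: since $e_f\geq 0$, the degree bound reads $\deg\beta(\mathcal A_k(f))\leq n-k+ke_f$, and because $\Lambda_{\max}(f)\cap(\NN^*)^n\neq\emptyset$ this bound is the leading exponent of $Z_f(t)$, so $\alpha=e_f-1$ and $\beta=n$ in the notation of section \ref{Le}. By the characterisation of the leading exponent established there, it then suffices to compute $\lim_{u\to\infty}u^{-n}Z_f(u^{1-e_f}t)=\sum_{k\geq 1}c_k t^k$, where $c_k u^{n-k+ke_f}$ is the leading term of $\beta(\mathcal A_k(f))$ with respect to this linear bound; so the whole problem reduces to determining the leading coefficients $c_k$.

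Next I would return to the proof of Theorem \ref{LinearBound}, where $\deg\beta(\mathcal A_k(f))=\max\{p_k,q_k\}$ and the bound $n-k+ke_f$ is realised only through $p_k=n-k+k\max\{1-\tfrac{s(a)}{m_f(a)}:a\in(\NN^*)^n,\ m_f(a)=k\}$, the $q_k$-term being strictly smaller (this is exactly the inequality $q_k(\gamma)<n-1-k+ke_\gamma\leq n-1-k+ke_f$ when $e_f>0$, and the analogous strict gap when $e_f=0$). Hence for each $k$ the leading coefficient $c_k$ comes entirely from the $P_k(\gamma)$-part of the first sum in Theorem \ref{Thm1}, and in fact only from those $a\in(\NN^*)^n$ with $m_f(a)=k$ that realise $e_f$, i.e. $a\in E_k\cap(\NN^*)^n$. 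The coefficient of $u^{n-k+ke_f}$ in $\medsum_{\gamma<\Gamma(f)}((u-1)^n-\beta(X_\gamma))P_k(\gamma)$ picks up a leading $u^n$ from the $(u-1)^n$ factor for each such $a$ (the $\beta(X_\gamma)$ term having degree $\dim X_\gamma\leq n-1$, hence contributing to strictly lower order after the $u^{-s(a)}$ weight), so $c_k$ equals the number of lattice points $a\in(\NN^*)^n$ with $m_f(a)=k$ and $s(a)=k(1-e_f)$, i.e. $c_k=\#(E_k\cap(\NN^*)^n)$.

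It remains to evaluate this count under the hypothesis that $\Lambda_{\max}(f)\cap(\NN^*)^n=\{v\}$ is a single point. Using the remark after Definition \ref{def:E} together with convexity of $E_k$ (the lemma just before section \ref{Le}), $E_k$ is the convex hull of the rays through elements of $\Lambda_{\max}(f)$ intersected with $\{m_f=k\}$; with only the one interior primitive generator $v$, this forces $E_k$ to be the single point $\tfrac{k}{m_f(v)}v$ — the other generators realising $e_f$, if any, lie on coordinate hyperplanes and do not meet $(\RR_+^*)^n$. Thus $E_k\cap(\NN^*)^n$ is nonempty precisely when $m_f(v)\mid k$, in which case it is the single point $\tfrac{k}{m_f(v)}v$, giving $c_k=1$ if $m_f(v)\mid k$ and $c_k=0$ otherwise. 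Therefore $\sum_{k\geq 1}c_k t^k=\sum_{l\geq 1}t^{l\,m_f(v)}=\tfrac{t^{m_f(v)}}{1-t^{m_f(v)}}$, which is the claimed limit. I expect the main obstacle to be the bookkeeping in the previous paragraph: carefully checking that after the substitution $\LL\to u$, $t\to u^{1-e_f}t$, no contribution from $\beta(X_\gamma)$, from $Q_k(\gamma)$, or from $a\notin E_k$ reaches degree $n$ in $u$, so that the limit is governed exactly by $\#(E_k\cap(\NN^*)^n)$ and nothing else.
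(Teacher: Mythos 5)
Your overall route is the same as the paper's: the printed proof of Corollary \ref{corA} consists precisely of the assertion that, by Theorem \ref{LinearBound}, the leading exponent of $Z_f$ is $(e_f-1)k+n$, that this bound is attained exactly at the multiples of $m_f(v)$, and that the leading coefficient there equals one. What you add is a justification of the last two assertions, by identifying the leading coefficient $c_k$ with $\#\bigl(E_k\cap(\NN^*)^n\bigr)$; the reduction to the $P_k(\gamma)$-part and the observation that $(u-1)^n-\beta(X_\gamma)$ contributes $u^n$ with coefficient $1$ (since $\deg\beta(X_\gamma)=\dim X_\gamma\le n-1$) are correct.

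However, the two claims you add are exactly the delicate points, and as stated neither holds in the full generality of the corollary's hypotheses. First, for $e_f=0$ there is no strict gap for the $q_k$-terms: the proof of Theorem \ref{LinearBound} only gives $q_k(\gamma)\le n-1-k$, hence $q_k\le n-k$, which \emph{equals} the leading exponent $n-k+ke_f$. When $X_{\gamma(v)}\ne\emptyset$ the $Q_k$-terms do reach top degree; for $f=x^2-y^2$ (where $v=(1,1)$ and $e_f=0$) one computes $\beta(\mathcal A_k(f))=(k-1)(u-1)^2u^{-k}$ for all $k$, so the limit is $t^2/(1-t)^2$ rather than $t^2/(1-t^2)$. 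Second, the step ``$E_k$ is the single point $\tfrac{k}{m_f(v)}v$'' does not follow from $\Lambda_{\max}(f)\cap(\NN^*)^n=\{v\}$ alone: if $\Lambda_{\max}(f)$ contains a generator $w$ on a coordinate hyperplane adjacent to $v$ in the fan, the open segment joining $\tfrac{k}{m_f(v)}v$ to $\tfrac{k}{m_f(w)}w$ lies in $(\RR_+^*)^n\cap E_k$ and may contain further lattice points; for $f=x^2y^2+x^5$ one has $\Lambda_{\max}(f)=\{(1,0),(2,3)\}$, $e_f=1/2$, and $E_{10}\cap(\NN^*)^2$ consists of the three points $(2,3),(3,2),(4,1)$. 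Both steps become valid if one assumes in addition that $e_f>0$ and that $f$ is convenient (so $\Lambda_{\max}(f)\subset(\NN^*)^n$, the hypothesis forces $\Lambda_{\max}(f)=\{v\}$, and $\overline{E_k}$ is a compact polytope all of whose vertices lie on the ray through $v$), which is the setting in which the paper actually applies the corollary. To be fair, these gaps are shared by the paper's own two-line proof, which simply asserts that the leading coefficient is one; but if you want a complete argument you must either add these hypotheses or handle the $e_f=0$ and non-convenient configurations explicitly.
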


\begin{proof} By Theorem \ref{LinearBound}, the zeta function of $f$ admits 
as leading exponent $(e_f-1)k+n$, and the bound is attained 
for all multiple\coGR{s} of $m_f(v)$. 
In this case the leading coefficient is equal to one, and this implies the result.
\end{proof}

A facet is an ($n-1$)-dimensional face of $\Gamma_+(f)$. Let $\gamma$ be a compact facet of $\Gamma_+(f)$. The associated cone of the subdivision of $\Gamma^*(f)$ is one dimensional, and we denote by $v$ in this section the primitive vector with 
$\gamma(v)=\gamma$.
Recall that $h=m_f-s$.

\begin{lem} 
For $k\in \NN^*$, the degree of $P_k(\gamma)$ is given by 
\begin{align*}
\deg P_k(\gamma)=&
\begin{cases}
-\infty&\textrm{if }\ m_f(v)\nmid\ k,\\
-k\frac{s(v)}{m_f(v)}&\textrm{if }\ m_f(v)\mid k
\end{cases}
\end{align*}
and the degree of $Q_k(\gamma)$ by
\begin{align*}
\deg Q_k(\gamma)=&
\begin{cases}
-\infty&\textrm{if }\ k\le m_f(v),\\
-k+\bigl\lfloor\frac{k-1}{m_f(v)}\bigr\rfloor
(m_f(v)-s(v))&\textrm{if }\  k>m_f(v) \ \textrm{ and }\  h(v)>0,\\
-k+m_f(v)-s(v)&\textrm{if }\ k>m_f(v)\ \textrm{ and }\ h(v)\leq 0.
\end{cases}
\end{align*}
\end{lem}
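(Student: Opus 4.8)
The plan is to compute the two degrees by unwinding the definitions of $P_k(\gamma)$ and $Q_k(\gamma)$ from Theorem~\ref{Thm1} (with $\LL$ replaced by $u$) on the one-dimensional cone $\sigma$ corresponding to the compact facet $\gamma$. Since $\gamma(v)=\gamma$ for the primitive vector $v$, every $a\in(\NN^*)^n$ with $\gamma(a)=\gamma$ is an integer multiple $a=\ell v$ for $\ell\in\NN^*$; conversely every such $\ell v$ lies in $(\NN^*)^n$ because $v\in(\NN^*)^n$ (as $\gamma$ is compact, hence $v$ has all positive entries). On this set $m_f$ is linear, so $m_f(\ell v)=\ell\,m_f(v)$ and $s(\ell v)=\ell\,s(v)$. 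First I would handle $P_k(\gamma)=\sum_{\ell v,\ m_f(\ell v)=k}u^{-s(\ell v)}$: the condition $m_f(\ell v)=k$ has a solution $\ell\in\NN^*$ exactly when $m_f(v)\mid k$, in which case $\ell=k/m_f(v)$ is unique and the single term is $u^{-s(\ell v)}=u^{-k s(v)/m_f(v)}$; otherwise the sum is empty and, by the convention $\deg\emptyset=-\infty$, $\deg P_k(\gamma)=-\infty$. This gives the stated formula for $P_k(\gamma)$ immediately.

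Next I would treat $Q_k(\gamma)=\sum_{\ell v,\ m_f(\ell v)<k}u^{-k+m_f(\ell v)-s(\ell v)}$. The summation index runs over those $\ell\in\NN^*$ with $\ell\,m_f(v)<k$, i.e. $1\le \ell\le \lceil k/m_f(v)\rceil-1=\lfloor (k-1)/m_f(v)\rfloor$ (using $m_f(v)\ge 1$). If $k\le m_f(v)$ there is no such $\ell$, so $\deg Q_k(\gamma)=-\infty$. Otherwise the exponent of the $\ell$-th term is $-k+\ell(m_f(v)-s(v))=-k+\ell\,h(v)$, and $\deg Q_k(\gamma)$ is the maximum of these over $1\le\ell\le\lfloor (k-1)/m_f(v)\rfloor$. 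This maximum is a straightforward optimization of the linear function $\ell\mapsto \ell\,h(v)$: if $h(v)>0$ it is attained at the largest admissible $\ell=\lfloor (k-1)/m_f(v)\rfloor$, giving $-k+\lfloor (k-1)/m_f(v)\rfloor\,(m_f(v)-s(v))$; if $h(v)\le 0$ it is attained at $\ell=1$, giving $-k+m_f(v)-s(v)$. These are precisely the three cases in the statement.

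The only genuine subtlety — and the step I would write most carefully — is the bookkeeping that distinguishes ``$m_f(\ell v)<k$'' from ``$m_f(\ell v)=k$'': one must verify that the range $1\le\ell\le\lfloor (k-1)/m_f(v)\rfloor$ correctly captures the strict inequality $\ell\,m_f(v)\le k-1$ and does not accidentally include the case $\ell\,m_f(v)=k$, which is handled by $P_k(\gamma)$ instead; this is where the floor function enters, and it must agree with the conventions that the degree of an empty sum is $-\infty$. Aside from this, the two displayed formulas are a direct consequence of the definitions together with the linearity of $m_f$ and $s$ along the ray $\RR_+v$, and the fact that $v\in(\NN^*)^n$ since $\gamma$ is compact, which is what ensures every lattice point $\ell v$ actually contributes to the sum. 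No cancellation among terms can occur since all coefficients equal $1$, so the degree is indeed the maximum of the exponents.
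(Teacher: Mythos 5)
Your proof is correct and takes essentially the same route as the paper's: both reduce the sums defining $P_k(\gamma)$ and $Q_k(\gamma)$ to the integer multiples $\ell v$ of the primitive generator $v$ of the $1$-cone dual to the compact facet $\gamma$, and then maximize the linear exponent $-k+\ell\,h(v)$ over the admissible range of $\ell$. The paper records only the two reduction identities and leaves the optimization implicit; you have simply written out that elementary step (including the divisibility condition for $P_k$ and the bound $\ell\le\lfloor(k-1)/m_f(v)\rfloor$ for $Q_k$) in full.
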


\begin{proof}
The proof is a direct consequence of the following equalities: 
\begin{align*}
\deg P_k(\gamma)=&\max\{-s(a):a=\lambda v\, \lambda \in \NN^*,\ m_f(a)=k\}\\
\deg Q_k(\gamma)=&-k+\max\{m_f(a)-s(a):a=\lambda v, \lambda
\in\NN^*, \ m_f(\lambda v)\le k-1\},
\end{align*}
coming from the definition of $P_k(\gamma)$ and $Q_k(\gamma)$.
\end{proof}

In particular, we note that for $k=m_f(v)$ we have $\deg P_k(\gamma)=-s(v)$ whereas $Q_{k}(\gamma)=0$ .

\begin{cor} Let $k\in \NN^*$. Assume that $k$ is divisible by
  $m_f(v)$. Then the sign of $h(v)$ 
\coGR{determines} the following: 
\begin{itemize}
\item if $h(v)>0$, then $\deg P_k(\gamma)>\deg Q_k(\gamma)$, 
\item if $h(v)=0$, then $\deg P_k(\gamma)=\deg Q_k(\gamma)$, 
\item if $h(v)<0$ and $k\neq m_f(v)$, then $\deg P_k(\gamma)<\deg Q_k(\gamma)$. 
\end{itemize}
\end{cor}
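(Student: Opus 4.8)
The plan is to read off everything from the two degree formulas established in the preceding lemma, specialized to a compact facet $\gamma$ and to $k$ divisible by $m_f(v)$. First I would record that, under the hypothesis $m_f(v)\mid k$, the lemma gives $\deg P_k(\gamma)=-k\frac{s(v)}{m_f(v)}$, which I will rewrite as $-k+k\cdot\frac{m_f(v)-s(v)}{m_f(v)}=-k+\frac{k}{m_f(v)}h(v)$ using $h=m_f-s$. Since $k$ is a positive multiple of $m_f(v)$, the coefficient $\frac{k}{m_f(v)}$ is a positive integer, so the sign of $\deg P_k(\gamma)+k$ coincides with the sign of $h(v)$.

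Next I would treat $\deg Q_k(\gamma)$. For $k=m_f(v)$ the remark just before the corollary gives $\deg Q_k(\gamma)=0$ while $\deg P_k(\gamma)=-s(v)$; but the corollary's third bullet excludes $k=m_f(v)$, so I may assume $k>m_f(v)$, and then the lemma applies. In the case $h(v)>0$, $\deg Q_k(\gamma)=-k+\lfloor\frac{k-1}{m_f(v)}\rfloor\,h(v)$; since $k$ is a multiple of $m_f(v)$ with $k>m_f(v)$, we have $\lfloor\frac{k-1}{m_f(v)}\rfloor=\frac{k}{m_f(v)}-1<\frac{k}{m_f(v)}$, and because $h(v)>0$ this gives $\deg Q_k(\gamma)<-k+\frac{k}{m_f(v)}h(v)=\deg P_k(\gamma)$, proving the first bullet. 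In the case $h(v)=0$, the lemma gives $\deg Q_k(\gamma)=-k+m_f(v)-s(v)=-k+h(v)=-k$, and likewise $\deg P_k(\gamma)=-k+\frac{k}{m_f(v)}h(v)=-k$, so they are equal, proving the second bullet. In the case $h(v)<0$, the lemma gives $\deg Q_k(\gamma)=-k+m_f(v)-s(v)=-k+h(v)$, while $\deg P_k(\gamma)=-k+\frac{k}{m_f(v)}h(v)$; since $\frac{k}{m_f(v)}\ge 2$ (as $k>m_f(v)$ and $k$ is a multiple of $m_f(v)$, the excluded case $k=m_f(v)$ being exactly what forces this) and $h(v)<0$, we get $\frac{k}{m_f(v)}h(v)<h(v)$, hence $\deg P_k(\gamma)<\deg Q_k(\gamma)$, proving the third bullet.

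The argument is essentially bookkeeping with the floor function, so there is no real obstacle; the only point demanding a little care is the boundary between the second and third bullets of the lemma's formula for $\deg Q_k(\gamma)$ and the need for the hypothesis $k\ne m_f(v)$ in the strict inequality of the third bullet — this is precisely where $\frac{k}{m_f(v)}\ge 2$ is used, and without it one would only get $\le$. I would state this cleanly and note in passing (or in the remark that already follows) the degenerate case $k=m_f(v)$, where $\deg Q_k(\gamma)=0\ge -s(v)=\deg P_k(\gamma)$ regardless of the sign of $h(v)$, to explain why it is excluded from the corollary.
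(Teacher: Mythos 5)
Your argument is correct and is essentially the paper's own proof: both substitute the formulas of the preceding lemma and use $\bigl\lfloor\frac{k-1}{m_f(v)}\bigr\rfloor=\frac{k}{m_f(v)}-1$ for $m_f(v)\mid k$, the paper merely packaging the comparison as $\deg P_k(\gamma)-\deg Q_k(\gamma)=\bigl(\frac{k}{m_f(v)}-\lfloor\frac{k-1}{m_f(v)}\rfloor\bigr)h(v)$, resp. $\bigl(\frac{k}{m_f(v)}-1\bigr)h(v)$, rather than comparing the two sides term by term. One small slip in your closing aside: for $k=m_f(v)$ the element $Q_k(\gamma)$ is the zero element, so by the lemma its degree is $-\infty$ (not $0$), hence $\deg P_k(\gamma)=-s(v)>-\infty=\deg Q_k(\gamma)$ whatever the sign of $h(v)$ --- this is the real reason the third bullet must exclude $k=m_f(v)$, and it does not affect the proof of the three bullets for $k>m_f(v)$.
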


\begin{proof}
If $m_f(v)-s(v)>0$ and $m_f(v)\mid k$, we obtain 
\begin{align*}
\deg P_k(\gamma)-\deg Q_k(\gamma)
=&
-k\frac{s(v)}{m_f(v)}+k-
\Bigl\lfloor\frac{k-1}{m_f(v)}\Bigr\rfloor(m_f(v)-s(v))\\
=&
\biggl(\frac{k}{m_f(v)}-\Bigl\lfloor\frac{k-1}{m_f(v)}\Bigr\rfloor\biggr)(m_f(v)-s(v))
\coRR{>0,}
\end{align*}
\coRR{and} this proves the first statement. 
If $m_f(v)-s(v)\le0$ and $m_f(v)\mid k$, we obtain 
\begin{align*}
\deg P_k(\gamma)-\deg Q_k(\gamma)
=&
-k\frac{s(v)}{m_f(v)}+k-(m_f(v)-s(v))
\\
=&
\biggl(\frac{k}{m_f(v)}-1\biggr)(m_f(v)-s(v)),
\end{align*}
and this implies the remaining cases.
\end{proof}

\subsection{Convenient weighted homogeneous polynomials}\label{weight}

Let $f\in \RR[x_1,\ldots,x_n]$ be a weighted homogeneous polynomial with respect to the weight
$(w_1,\dots,w_n;d)$. We assume that $f$ is convenient. 

In this case $\Gamma_+ (f)$ has a unique compact facet $\gamma_f$, 
and its associated $1$-cone is generated by the primitive vector 
$v=(w_1,\ldots,w_n)$, with $m_f(v)=\coRR{d}$. 

Note that $\Lambda_{\max}(f)=\{v\}\in (\NN^*)^n$.
As a consequence,  
$$
h(v)=m_f(v)-\sum_{i=1}^n w_i
$$ 
and 
$$
e_f=\frac{h(v)}{m_f(v)}=1-\frac{1}{m_f(v)}\sum_{i=1}^n {w_i}.
$$
In particular, if we
are able to compute $h(v)$ and $m_f(v)$ from the zeta function, 
then we can recover the sum of the weights of $f$.

Assume that the zeta function of $f$ is given. Then we can decide whether $e_f>0$ 
of $e_f\leq 0$ using Proposition \ref{prop-posi} and Proposition \ref{prop-nega}. 
If $e_f>0$, we \coGR{recover} the value of $e_f$ by Proposition \ref{prop-posi},
 together with $m_f(v)$ by Corollary \ref{corA}. 
In this case we derive the value of $h(v)$ \coGR{as} $h(v)=m_f(v)e_f$.

In case $e_f \leq 0$, the situation is more subtle to analyse. Note
that if $X_{\gamma_f}$ is not empty, then the degree of the virtual Poincar\'e polynomial 
of $\mathcal A_k(f)$ is given by 
$$
n+\max \{\deg P_k(\gamma_f),\deg Q_k(\gamma_f)\}
\}
$$ 
--- but it may be less if $X_{\gamma_f}$ is empty. 
The degree of $P_k(\gamma_f)$ and $Q_k(\gamma_f)$ may be expressed by the formulae
$$
\deg P_k(\gamma_f)
=-k+\max \{h(a):~a\in (\mathbb N^*)^n,~m_f(a)=k,~\gamma(a)=\gamma_f \}
$$
and 
$$
\deg Q_k(\gamma_f)
=-k+\max \{h(a):~a\in (\mathbb N^*)^n,~m_f(a)<k,~\gamma(a)=\gamma_f \}.
$$
Therefore we are \coGR{led} to analyse the levels of the
function $h=m_f-s$ on $\mathbb N^n$, and more precisely on the subsets
of $\mathbb N^n$ defined by $\{m_f(a)=k\}$ and $\{m_f(a)<k\}$.

To begin with, let us forget that we are interested in integral points
and describe its
levels on $\mathbb R_+^n$. The function $h$ is linear on each
cone of the subdivision of $\Gamma^*(f)$, therefore its levels are completely described
by its value on $v$ and on the canonical basis $\{e_1,\ldots,e_n\}$ of
$\mathbb R^n$. Note that $h(e_i)=m_f(e_i)-1= -1$ since the
Newton polyhedron is convenient.

In particular, the levels of $h$ on $\mathbb R_+^n$ are described as follows:
\begin{itemize}
\item if $h(v)=0$, there are only non-positive levels
that are cylinders parallel to the line generated by $v$, 
\item if $h(v)<0$, there are only strictly negative levels
that are unions of $n$ bounded simplexes with a
vertex on the line generated by $v$ and other vertexes on $(n-1)$ 
positive coordinates axis, whereas 
\item if $h(v)>0$, the levels of $h$ are unions of $n$ unbounded 
simplexes with a vertex on the line generated by $v$.
\end{itemize}

\coGR{Returning} to the computation of the degrees of 
$P_k(\gamma)$ and $Q_k(\gamma)$, 
we need to investigate the integral points on these
levels.

\begin{lem} 
Let $k\in \NN^*$. Assume that $e_f \leq 0$ and $X_{\gamma_f}\neq \emptyset$. 
\begin{itemize}
\item $\deg \beta(\mathcal A_k(f))\leq n-k$ with equality 
for \coGR{infinitely many $k$} if and only if $h(v)=0$,
\item $\deg \beta(\mathcal A_k(f))<n-k$ if and only if $h(v)<0$. 
In th\coGR{is} case, for \coGR{infinitely many $k$}, we have
$$
\deg \beta(\mathcal A_k(f))=n-k+
\max\{h(a):a\in (\NN^*)^n,\ X_{\gamma(a) }\ne \emptyset \}.
$$
\end{itemize}
\end{lem}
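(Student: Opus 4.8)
The plan is to return to the identity $\deg\beta(\mathcal A_k(f))=\max\{p_k,q_k\}$ established in the proof of Theorem \ref{LinearBound} and rewrite it in the present situation. Since $f$ is non-degenerate, every nonempty $X_\gamma$ is a smooth hypersurface of $(\RR^*)^n$, so $\dim X_\gamma=n-1$; and since $f$ is weighted homogeneous and convenient, the compact faces of $\Gamma_+(f)$ are $\gamma_f$ and its faces, the extreme rays of the cones of $\Gamma^*(f)$ are generated by $v$ and the $e_i$, and $h(e_i)=m_f(e_i)-1=-1$. Substituting $\dim X_\gamma=n-1$ and $-s(a)=h(a)-m_f(a)$ into the expressions for $p_k$ and $q_k$ yields
$$
p_k=n-k+\max\{h(a):a\in(\NN^*)^n,\ m_f(a)=k\}
$$
and
$$
q_k=n-k+\max\{h(a):a\in(\NN^*)^n,\ m_f(a)<k,\ X_{\gamma(a)}\neq\emptyset\}.
$$
Throughout I use the inequality $h(a)\le m_f(a)\,e_f$ for $a\in(\RR_+^*)^n$ (immediate from the definition of $e_f$) and the fact, recalled at the start of this subsection, that $e_f=h(v)/m_f(v)$, so that under the hypothesis $e_f\le0$ either $h(v)=0$ or $h(v)<0$.

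Assume first $h(v)=0$, i.e.\ $e_f=0$. Then Theorem \ref{LinearBound} gives $\deg\beta(\mathcal A_k(f))\le n-k$ for every $k$, and, since $\Lambda_{\max}(f)\cap(\NN^*)^n=\{v\}\neq\emptyset$, the same theorem says this bound is the leading exponent of $Z_f(t)$, hence is reached for infinitely many $k$. (Concretely, for $k$ a multiple of $m_f(v)$ the order $a=(k/m_f(v))\,v$ lies in $(\NN^*)^n$ with $m_f(a)=k$ and $h(a)=0$, so $p_k=n-k$.) This gives the first item.

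Assume now $h(v)<0$, i.e.\ $e_f<0$. Then $h(a)\le m_f(a)\,e_f\le e_f<0$ for every $a\in(\NN^*)^n$ (as $m_f(a)\ge1$), so $p_k<n-k$ and $q_k<n-k$, whence $\deg\beta(\mathcal A_k(f))<n-k$ for all $k$. For the exact value at large $k$, set $M=\max\{h(a):a\in(\NN^*)^n,\ X_{\gamma(a)}\neq\emptyset\}$. I would first verify that this maximum is finite and attained: there are finitely many faces $\gamma$ with $X_\gamma\neq\emptyset$, each cone $\sigma_\gamma$ of $\Gamma^*(f)$ is pointed, $h$ is linear on $\sigma_\gamma$ and strictly negative off the origin there (its extreme rays lie among $v$ and the $e_i$, on which $h<0$), so $\{h\ge c\}\cap\sigma_\gamma$ is bounded and carries finitely many lattice points; and the set is nonempty since $\gamma(v)=\gamma_f$ and $X_{\gamma_f}\neq\emptyset$ by hypothesis. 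Fix $a^*\in(\NN^*)^n$ realising $M$ and put $k^*=m_f(a^*)$. For $k>k^*$ one has $m_f(a^*)<k$, so the maximum defining $q_k$ is $\le M$ trivially and $\ge h(a^*)=M$, hence equals $M$, so $q_k=n-k+M$; while $p_k\le n-k+ke_f<n-k+M$ as soon as $k>M/e_f$ (positive, since $M\le e_f<0$). Thus, for every $k>\max\{k^*,M/e_f\}$,
$$
\deg\beta(\mathcal A_k(f))=\max\{p_k,q_k\}=n-k+M=n-k+\max\{h(a):a\in(\NN^*)^n,\ X_{\gamma(a)}\neq\emptyset\},
$$
which is the asserted formula, holding for infinitely many $k$. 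The two ``if and only if'' assertions then follow from the dichotomy above, the property of the first item characterising $h(v)=0$ and the strict inequality of the second characterising $h(v)<0$.

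The step I expect to be the real obstacle is the finiteness and attainment of $M$ — the compactness argument on the cones $\sigma_\gamma$, using that $h$ is negative on their extreme rays — together with the bookkeeping needed to transcribe $p_k$ and $q_k$ from the proof of Theorem \ref{LinearBound} into the level-set language of $h=m_f-s$; granting these, the rest is just the inequality $h\le m_f\,e_f$ and the already-established identity $\deg\beta(\mathcal A_k(f))=\max\{p_k,q_k\}$.
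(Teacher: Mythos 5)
Your proof is correct and follows essentially the same route as the paper: both reduce to the degree formula $\deg\beta(\mathcal A_k(f))=\max\{p_k,q_k\}$ from Theorem \ref{LinearBound} and analyse the levels of $h=m_f-s$ on the lattice points of the cones of $\Gamma^*(f)$, using that $h$ is negative on the extreme rays $e_1,\dots,e_n$ and on $v$. You merely supply details the paper leaves implicit (finiteness and attainment of $\max h$, and the explicit threshold on $k$ beyond which $q_k$ dominates $p_k$).
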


\begin{proof} 
It suffices to compute the maximum of $h$ 
on $\{a\in(\RR_+)^n:m_f(a)=k\}$ and 
on $\{a\in(\RR_+)^n:m_f(a)<k\}$. 

If $h(v)=0$, then $h$ has non\coGR{-}positive levels. 
The maximum of $h$ on $\{a\in(\RR_+)^n:m_f(a)=k\}$ 
is attained on the line generated by $v$ (in general not at an integral point). 
In particular, if $k>m_f(v)$, the maximum of $h$ on $\{a\in (\RR_+)^n:m_f(a)<k\}$ 
is attained at $v$.

In \coGR{the} case $h(v)<0$, then $h$ has strictly negative levels. If $k>m_f(v)$, 
then 
$$
h(v)\leq \max\{h(a):a\in (\NN^*)^n,\ X_{\gamma(a) }\ne \emptyset \}<0.
$$
\end{proof}

\begin{rem} 
It may happen that $\max \{h(a):~a\in (\mathbb N^*)^n\}$ 
is strictly bigger than $h(v)$ in the case $h(v)<0$. 
Consider for example $f(x_1,x_2,x_3)=x_1^2-x_2^2+x_3^m$ with $m$ odd. 
Then $v=(m,m,2)$ and $m_f(v)=2m$ so that $h(v)=m_f(v)-s(v)=-2$. 
\coGR{However,} for $a=(1,1,1)$, we have $h(a)=2-3=-1$ and 
$\gamma(a)$ is the face corresponding to $x_1^2-x_2^2$\coGR{,} 
so that $X_{\gamma(a) }\ne \emptyset$. 
\end{rem}

Assume that $e_f \leq 0$ and $X_{\gamma_f}=\emptyset$. 
Then  $X_{\gamma}=\emptyset$ for
all face\coGR{s} $\gamma$ of $\Gamma (f)$, therefore the degree of the virtual
Poincar\'e polynomial of $\mathcal A_k(f)$ is simply
given by the terms $P_k(\gamma)$ for $\gamma < \Gamma(f)$, and the discussion becomes easier. 

\begin{lem} Assume that $e_f \leq 0$ and $X_{\gamma_f}= \emptyset$. Let $k\in
  \NN^*$. Then
\begin{itemize}
\item $\deg \beta(\mathcal A_k(f))\leq n-k$ with equality for infinitely
  many $k$ if and only if $h(v)=0$,
\item $\deg \beta(\mathcal A_k(f))<n-k$ if and only if $h(v)<0$.
\end{itemize}
\end{lem}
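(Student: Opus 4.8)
The plan is to exploit the hypothesis $X_{\gamma_f}=\emptyset$, which --- as recalled just above the statement --- forces $X_\gamma=\emptyset$, hence $[X_\gamma]=0$, for \emph{every} compact face $\gamma<\Gamma(f)$, so that the formula of Theorem \ref{Thm1} collapses to a single sum. Substituting $[X_\gamma]=0$ into that formula (read with the virtual Poincar\'e polynomial, i.e. $\LL$ replaced by $u$) gives
$$
\beta(\mathcal A_k(f))=(u-1)^n\medsum_{\gamma<\Gamma(f)}P_k(\gamma)
=(u-1)^n\medsum_{a\in(\NN^*)^n,\ m_f(a)=k}u^{-s(a)},
$$
the second equality because $\gamma(a)$ is a compact face for every $a\in(\NN^*)^n$, so the sum over compact faces reassembles as a sum over all such $a$ (the same bookkeeping as in the proof of Theorem \ref{Thm1}; the sum is finite since $f$ is convenient). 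Since $(u-1)^n$ has degree $n$ and the $u^{-s(a)}$ have positive coefficients, one reads off, with the convention $\max\emptyset=-\infty$,
$$
\deg\beta(\mathcal A_k(f))=n+\max\{-s(a):a\in(\NN^*)^n,\ m_f(a)=k\}
=n-k+\max\{h(a):a\in(\NN^*)^n,\ m_f(a)=k\},
$$
using $h=m_f-s$ and $m_f(a)=k$ on the range of the maximum.

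Next I would establish the uniform upper bound. For $a\in(\NN^*)^n\subset(\RR_+^*)^n$ the defining property of $e_f$ gives $1-\tfrac{s(a)}{m_f(a)}\le e_f$, and multiplying by $m_f(a)>0$ yields $h(a)\le m_f(a)\,e_f$. Because $f$ is convenient and weighted homogeneous we have $e_f=h(v)/m_f(v)$ (established above), so the hypothesis $e_f\le 0$ is exactly $h(v)\le 0$; in particular $h(a)\le 0$ for every such $a$, whence $\deg\beta(\mathcal A_k(f))\le n-k$ for all $k\in\NN^*$. I also record the sharper inequality $h(a)\le m_f(a)\,e_f=k\,e_f$ valid whenever $m_f(a)=k$.

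Finally I would split according to the sign of $h(v)$, the hypothesis $e_f\le0$ leaving only the two cases $h(v)=0$ and $h(v)<0$. If $h(v)<0$, i.e. $e_f<0$, then every $a\in(\NN^*)^n$ with $m_f(a)=k$ satisfies $h(a)\le k\,e_f<0$, so $\deg\beta(\mathcal A_k(f))<n-k$ for all $k$. If $h(v)=0$, i.e. $e_f=0$, then for $\lambda\in\NN^*$ the vector $a=\lambda v\in(\NN^*)^n$ satisfies $m_f(\lambda v)=\lambda\,m_f(v)$ and $h(\lambda v)=\lambda\,h(v)=0$, so for every $k$ divisible by $m_f(v)$ the maximum above equals $0$ and $\deg\beta(\mathcal A_k(f))=n-k$; thus equality holds for infinitely many $k$. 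Since these two cases are mutually exclusive and exhaustive, they give both equivalences in the statement. I do not expect a real obstacle here --- the assertion is genuinely easier than the $X_{\gamma_f}\ne\emptyset$ case; the only points requiring minimal care are the reassembly of $\medsum_{\gamma}P_k(\gamma)$ into a sum over $(\NN^*)^n$ and the handling of $\{a\in(\NN^*)^n:m_f(a)=k\}=\emptyset$, where $\deg\beta(\mathcal A_k(f))=-\infty$ is compatible with both ``$<n-k$'' and ``not an equality'', so the statement is unaffected.
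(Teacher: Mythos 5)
Your argument is correct and follows essentially the same route as the paper: both reduce, via Theorem \ref{Thm1} with all $[X_\gamma]=0$, to $\deg\beta(\mathcal A_k(f))=n+\max\{-s(a):a\in(\NN^*)^n,\ m_f(a)=k\}$ and then bound $-s(a)$ by $-s\bigl(\tfrac{k}{m_f(v)}v\bigr)=-k+ke_f$ with equality along the ray generated by $v$ when $m_f(v)\mid k$ (the paper phrases this bound through the geometry of the level sets of $m_f$ on $\RR_+^n$, you through the definition of $e_f$ and the identity $e_f=h(v)/m_f(v)$ --- the same estimate). One immaterial slip: the index set $\{a\in(\NN^*)^n:m_f(a)=k\}$ need not be finite even for convenient $f$ (some $a_i$ may grow while $m_f(a)$ stays bounded); what actually matters is that the sum converges in $\ZZ[u][[u^{-1}]]$ and that the positivity of the coefficients prevents cancellation in top degree, which you do invoke.
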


\begin{proof} The degree of $\beta(\mathcal A_k(f))$ is given by 
$$\max\{-s(a): ~a\in (\mathbb N^*)^n,~m_f(a)=k \}$$
because the terms of the form $Q_k(\gamma)$ vanish, for $\gamma < \Gamma(f)$.
Moreover
$$ \sup\{-s(a): ~a\in (\mathbb R_+)^n,~m_f(a)=k \}=
-s\Bigl(\frac{k}{m_f(v)}v\Bigr),
$$
since the levels of $m_f$ on $\RR_+^n$ are 
the boundary of cones \coGR{with vertex a point lying on the line generated by $v$,
whose} sides are parallel to the coordinate hyperplanes. As a consequence
$$ \max\{-s(a): ~a\in (\mathbb N^*)^n,~m_f(a)=k \}\leq
-s\Bigl(\frac{k}{m_f(v)}v\Bigr),
$$
and the equality holds when $m_f(v)$ divides $k$. In particular, 
$s(\frac{k}{m_f(v)}v)=k$ if $h(v)=0$, whereas $s(\frac{k}{m_f(v)}v)>k$ if $h(v)<0$.
\end{proof}

Therefore we are able to recognise the sign of $e_f$, and thus the sign of $h(v)=m_f(v)e_f$, from the zeta
function of $f$ via Proposition \ref{prop-posi} and Proposition \ref{prop-nega}. Moreover we are able to obtain the value of $m_f(v)$, and thus of $h(v)$, when we already know that $h(v)$ is
greater than or equal to zero thanks to Corollary \ref{corA}. We collect the different possibilities for the sign of $h(v)$ in next proposition.

\begin{prop}\label{signh} 
Assume that $f$ is a convenient weighted homogeneous polynomial
\coGR{which is} non\coGR{-}degenerate with respect to its Newton polyhedron. 
Denote by $v$ the primitive vector associated with $f$. 
\begin{itemize}
\item $h(v)>0$ if and only if $e_f>0$, and more precisely $h(v)=m_f(v)e_f$.
\item $h(v)=0$ if and only if $\deg \beta(\mathcal A_k(f))\leq n-k$, with equality 
for infinitely many $k\in \NN^*$.
\item $h(v)<0$ if and only if $\deg \beta(\mathcal A_k(f))<n-k$ 
for \coRR{all} $k\in \NN^*$.
\end{itemize}
\end{prop}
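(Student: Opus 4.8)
The plan is to read Proposition~\ref{signh} as an organisation, by the sign of $h(v)$, of the facts already established. The first bullet reduces to the identity $h(v)=m_f(v)e_f$. Indeed, by definition $h(v)=m_f(v)-s(v)=m_f(v)-\sum_{i=1}^n w_i$, and since this subsection opened with the formula $e_f=1-\frac{1}{m_f(v)}\sum_{i=1}^n w_i$, one obtains $h(v)=m_f(v)e_f$ immediately; as $m_f(v)=d>0$, the sign of $h(v)$ coincides with that of $e_f$, which is exactly the first bullet (with the refinement $h(v)=m_f(v)e_f$ built in).

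For the remaining two bullets I would first establish the forward implications. If $h(v)=0$ (resp. $h(v)<0$), then $e_f=0\le 0$ (resp. $e_f<0\le 0$), so one of the two preceding lemmas treating the case $e_f\le 0$ applies: the one assuming $X_{\gamma_f}\neq\emptyset$ or the one assuming $X_{\gamma_f}=\emptyset$, whichever is relevant. Both lemmas assert the same pair of equivalences, so this case split costs nothing. Reading off the relevant bullet of that lemma gives $\deg\beta(\mathcal A_k(f))\le n-k$ with equality for infinitely many $k$ when $h(v)=0$, and $\deg\beta(\mathcal A_k(f))<n-k$ for all $k$ when $h(v)<0$.

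To upgrade these to "if and only if" statements I would invoke the trichotomy $h(v)>0$, $h(v)=0$, $h(v)<0$: it suffices to verify that the three right-hand-side conditions are pairwise incompatible, and the converses then follow formally, since the three implications above map the trichotomy into the right-hand side. The conditions "$\deg\beta(\mathcal A_k(f))\le n-k$ for all $k$ with equality for infinitely many $k$" and "$\deg\beta(\mathcal A_k(f))<n-k$ for all $k$" are manifestly incompatible. To see that $e_f>0$ is incompatible with each of them, recall that $\Lambda_{\max}(f)=\{v\}\subset(\NN^*)^n$, so $\Lambda_{\max}(f)\cap(\NN^*)^n\neq\emptyset$ and Theorem~\ref{LinearBound} says that $n-k+ke_f$ is the leading exponent of $Z_f(t)$; hence $\deg\beta(\mathcal A_k(f))=n-k+ke_f>n-k$ for infinitely many $k$, which rules out both of the other two conditions.

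I do not expect a genuine obstacle here: the proposition is bookkeeping on top of Theorem~\ref{LinearBound} and the two lemmas for the case $e_f\le 0$. The only two points requiring a little attention are that the two lemmas ($X_{\gamma_f}$ empty or not) deliver identical conclusions, so the dichotomy is harmless, and that the exclusion of the possibility $e_f>0$ in the converse directions must be drawn from Theorem~\ref{LinearBound} rather than from those lemmas — which is precisely what makes the trichotomy argument close.
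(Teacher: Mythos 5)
Your argument is correct and matches the paper's (implicit) proof: the proposition is stated there without a separate proof, as a summary of the identity $e_f=h(v)/m_f(v)$ together with the two preceding lemmas for the case $e_f\le 0$ and Theorem \ref{LinearBound} for the case $e_f>0$. Your explicit trichotomy argument to close the converse directions is exactly the required bookkeeping, and it is sound since $\Lambda_{\max}(f)=\{v\}\subset(\NN^*)^n$ for a convenient weighted homogeneous polynomial.
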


\begin{rem}\label{2-case} 
In \cite{Ould}, it is shown that the weights of a non-degenerate weighted homogeneous 
polynomial in two \coGR{variable} are invariants under blow-analytic equivalence, 
using the zeta function defined with the Euler characteristic of the homology of 
locally finite chains with closed supports \cite{KP}. 
Because of the good properties of the virtual Poincar\'e polynomial, 
we recover easily the analogous result, in the setting of blow-Nash
 equivalence. 
Indeed, the first exponent of the zeta function combined with 
the leading exponent give the weights. 
We mention moreover that, in the two \coGR{variable} case, \coGR{a} complete
classification has been recently achieved by S. Koike and A. Parusi\'nski \cite{KP10}.
\end{rem}

\section{Recovering the weights}\label{reco}

We focus in this section on how to recover the weights of a convenient non-degenerate 
weighted homogeneous polynomials in three variables from its zeta function.  
In the two \coGR{variable} case, the result is immediate by Proposition \ref{signh}, since $h(v)\geq 0$ as soon as the germ is singular.

Let $f(x_1,x_2,x_3)\in \RR[x_1,x_2,x_3]$ be a weighted homogeneous polynomial whose Newton polyhedron is convenient. 
Denote by $w_1,w_2,w_3\in \NN$ the weights and by $d\in \NN$ the weighted degree of $f$. 
Recall that $v=(w_1,w_2,w_3)$ is the primitive generator of the 1-cone associated with $\Gamma_+(f)$, and that $m_f(v)=d$.
Let $V_1=(p_1,0,0)$, $V_2=(0,p_2,0)$ and $V_3=(0,0,p_3)$ denote the vertexes of
$\Gamma_+(f)$, where $p_1,p_2,p_3\in \NN^*$ since $\Gamma_+(f)$ is convenient. Then $w_i=m_f(v)/p_i$ for $i=1,2,3$.
Assume that $p_1 \leq p_2 \leq p_3$ 
without loss of generality.

Let $\gamma_f$ denote the compact 2-dimensional face of $\Gamma_+(f)$.
Set $\gamma_i=\gamma_f\cap\{\nu_i=0\}$ for $i\in\{1,2,3\}$, so that the compact faces of the Newton polyhedron of $f$ are $\gamma_f$ in dimension two, $\gamma_1,\gamma_2, \gamma_3$ in dimension one and $V_1,V_2,V_3$ for the vertexes. 
Set $p_{ij}=\lcm(p_i,p_j)$ for $i\neq j \in\{1,2,3\}$ and recall that
$d=m_f(v)=\lcm(p_1,p_2,p_3)$.

We are going to give a complete description of the $k$-th coefficient of the zeta function of $f$, for $k\in \NN^*$. Theorem \ref{Thm1} describes these coefficients with respect to the contribution of each compact face $\gamma<\Gamma(f)$ of the Newton polyhedron of $f$, via the terms $P_k(\gamma)$ and $Q_k(\gamma)$ (understood as virtual Poincar\'e polynomials, cf. the introduction of section \ref{estim-deg}) given by
$$
P_k(\gamma)=\medsum_{a\in (\mathbb N^*)^3:\ m_f(a)=k,~\gamma(a)=\gamma}\,u^{-s(a)}
$$
and 
$$
Q_k(\gamma)=\medsum_{a\in (\mathbb N^*)^3:\ m_f(a)<k,~\gamma(a)=\gamma}\,u^{-k+m_f(a)-s(a)}.
$$ 
We begin with the terms of the form $P_k(\gamma)$.

\begin{lem}\label{lemP} Let $k\in \NN^*$ and $i\in \{1,2,3\}$. Set $\{j,s\}=\{1,2,3\}\setminus \{i\}$.
\begin{enumerate}
\item The term $P_k(\gamma_f)$ is non\coGR{-}zero if and only if $d|k$, and in this case $$P_k(\gamma_f)=u^{-(\frac{k}{p_1}
+\frac{k}{p_2}
+\frac{k}{p_3})}.$$
\item The term $P_k(\gamma_i)$ is non\coGR{-}zero if and only if $p_{js}|k$, and in this case $$P_k(\gamma_i)=(u-1)^{-1}u^{-(\lfloor\frac{k}{p_i}\rfloor
+\frac{k}{p_j}
+\frac{k}{p_s})}.$$
\item  The term $P_k(V_i)$ is non\coGR{-}zero if and only if $p_i|k$, and in this case $$P_k(V_i)=(u-1)^{-2}u^{-(\frac{k}{p_i}
+\lfloor\frac{k}{p_j}\rfloor
+\lfloor\frac{k}{p_s}\rfloor)}.$$
\end{enumerate}
\end{lem}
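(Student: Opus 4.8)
The plan is to compute each term $P_k(\gamma)$ directly from its definition as a sum of powers of $u$ over lattice points $a\in(\NN^*)^3$ lying on the cone dual to $\gamma$ with $m_f(a)=k$, using the fact that the cone dual to a compact face $\gamma$ of a convenient polyhedron in $\RR^3$ is either one-dimensional (for $\gamma=\gamma_f$ the $2$-face), two-dimensional (for the edges $\gamma_i$), or three-dimensional (for the vertices $V_i$), and that on each such cone $m_f$ is the restriction of a single linear form. First I would treat $P_k(\gamma_f)$: the dual cone is the ray generated by $v=(w_1,w_2,w_3)$, so $a\in(\NN^*)^3$ with $\gamma(a)=\gamma_f$ means $a=\lambda v$ for $\lambda\in\NN^*$, and $m_f(a)=\lambda d=k$ forces $d\mid k$ with $\lambda=k/d$; then $s(a)=\lambda(w_1+w_2+w_3)=k/p_1+k/p_2+k/p_3$ using $w_i=d/p_i$, which gives the stated monomial with coefficient one.

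Next I would handle $P_k(\gamma_i)$ for an edge $\gamma_i=\gamma_f\cap\{\nu_i=0\}$. Its dual cone is the two-dimensional cone spanned by $v$ and $e_i$ (since $\gamma_i$ lies in the hyperplane $\nu_i=0$, whose normal direction is $e_i$, and $\gamma_i\subset\gamma_f$). Thus $a$ with $\gamma(a)=\gamma_i$ ranges over $a=\mu v+\ell e_i$ with $\mu\in\NN^*$, $\ell\in\NN$ (the open relative interior, so $\mu>0$; the condition $a\in(\NN^*)^3$ forces the other two coordinates positive, which holds automatically since $\mu>0$). On this cone $m_f(a)=\mu d$ (adding $e_i$ does not change the value of the supporting linear form of $\gamma_i$, as $\gamma_i\subset\{\nu_i=0\}$), so $m_f(a)=k$ forces $d\mid k$... but wait — one must be more careful: the primitive generators of the two rays of the dual cone need not be $v$ and $e_i$ exactly. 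I would instead parametrize by the two primitive generators $v_1,v_2$ of the rays; on the ray through $e_i$ the function $m_f$ vanishes. Writing $a=\mu v+\ell(\text{primitive generator of the }e_i\text{-ray})$, the level $m_f(a)=k$ determines $\mu=k/d$, requiring $d\mid k$ — however the stated answer has condition $p_{js}\mid k$, so I must check which multiples $\mu v$ are \emph{interior} to $\gamma_f$ versus on $\gamma_i$: the correct normalization is that $a\in\{\nu_i=0\}^\perp$-translates, and summing over $\ell\ge 0$ yields the geometric factor $\sum_{\ell\ge0}u^{-\ell}\cdot(\text{base})$ only after checking integrality. The geometric series $\sum_{\ell\ge 0} u^{-(\text{something})\ell}$ collapses to $(u-1)^{-1}u^{\cdots}$ after reindexing; the floor $\lfloor k/p_i\rfloor$ appears because the minimal integral point on the relevant slice has $i$-th coordinate $\lfloor k/p_i\rfloor$ rather than $k/p_i$, and the condition $p_{js}\mid k$ is exactly the condition that such an integral point with $m_f=k$ exists, i.e.\ that $k$ is a common multiple of $p_j$ and $p_s$.

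Then $P_k(V_i)$: the dual cone is three-dimensional, spanned by $v$ and the two rays through $e_j,e_s$ (for $\{j,s\}=\{1,2,3\}\setminus\{i\}$), so $a$ ranges over $a=\mu v+\ell_j e_j+\ell_s e_s$ with $\mu\in\NN^*$, $\ell_j,\ell_s\ge0$, $m_f(a)=\mu d$; the level $m_f(a)=k$ gives $\mu=k/d$ whenever $p_i\mid k$ (again, the condition $p_i\mid k$ is exactly when an integral point with $i$-th coordinate $k/p_i$ and $m_f=k$ exists, since moving along $e_j,e_s$ leaves $m_f$ fixed and the $i$-coordinate is determined by the $v$-component), and then summing $\sum_{\ell_j,\ell_s\ge0}u^{-(\cdots)}$ gives the factor $(u-1)^{-2}$, with the two floors $\lfloor k/p_j\rfloor,\lfloor k/p_s\rfloor$ being the minimal values of the $j$- and $s$-coordinates. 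Throughout, I would use the formula $\sum_{\ell\ge 0}u^{-1-\ell}=(u-1)^{-1}$ (after suitable shift) to produce the $(u-1)^{-1}$ and $(u-1)^{-2}$ factors; one should note these live in the localized Grothendieck ring where $(u-1)^{-1}$ makes sense, consistent with Remark \ref{set-intro} on the decomposition $[X_\gamma]=(\LL-1)^{n-\dim\gamma}[\widehat X_\gamma]$.

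The main obstacle I expect is the bookkeeping of lattice points: getting the divisibility conditions ($d\mid k$, $p_{js}\mid k$, $p_i\mid k$) precisely right and identifying the \emph{minimal} integral point on each affine slice $\{m_f=k\}\cap(\text{relative interior of the cone})$, which is what produces the floor functions $\lfloor k/p_i\rfloor$ rather than the naive $k/p_i$. Concretely, on the two-dimensional cone dual to $\gamma_i$ one is summing $u^{-s(a)}$ over $a$ on a ray (the intersection of the cone with the hyperplane $m_f=k$); I would identify the endpoint of this ray on the boundary edge $\gamma_i$, compute the integer part of its $i$-th coordinate to locate the first lattice point, and then observe that successive lattice points along the ray increment $s(a)$ by exactly one, giving the geometric series. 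The same reasoning in the vertex case involves a two-parameter lattice sum over a translated quadrant, decoupling into a product of two geometric series, hence the $(u-1)^{-2}$. No deep ideas are needed; it is purely a careful unwinding of the combinatorial definition of $P_k(\gamma)$ together with the elementary identity for the primitive generators $w_i=d/p_i$.
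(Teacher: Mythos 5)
Your proposal is correct and follows essentially the same route as the paper: for each compact face one enumerates the lattice points $a\in(\NN^*)^3$ of the dual cone on the level $m_f(a)=k$ directly in coordinates (via $a_ip_i$, $a_jp_j$, $a_sp_s$), reads off the divisibility conditions $d\mid k$, $p_{js}\mid k$, $p_i\mid k$ from the equalities these impose, and sums the resulting geometric series in the free coordinates to produce the $(u-1)^{-1}$ and $(u-1)^{-2}$ factors. Your initial parametrization $a=\mu v+\ell e_i$ with integer $\mu$ is indeed not a bijection onto the lattice points of the two-dimensional cone (hence the spurious condition $d\mid k$ you noticed), but you correctly abandon it in favour of the coordinate description $a_jp_j=a_sp_s=k<a_ip_i$, which is exactly the paper's argument, the minimal admissible $i$-th coordinate being $\lfloor k/p_i\rfloor+1$ with the shift absorbed by the geometric series.
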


\begin{proof} 
The face $\gamma<\Gamma(f)$ gives a non\coGR{-}zero contribution $P_k(\gamma)$ 
if and only if the set $\{a\in (\NN^*)^n: m_f(a)=k, \gamma(a)=\gamma \}$ is not empty. 
Concerning $\gamma_f$, this set is equal to

\begin{align*}
\{\lambda v: \lambda \in \NN,~m_f(\lambda v)=k\}=&
\begin{cases}
\{\frac{k}{m_f(v)}v\}&\textrm{if }\ m_f(v)=d|k,\\
\emptyset &\textrm{otherwise,}
\end{cases}
\end{align*}
and therefore if $d|k$ we have $P_k(\gamma_f)=u^{-s(\frac{k}{m_f(v)}v)}$\coGR{,} 
where $v=(\frac{m_f(v)}{p_1},\frac{m_f(v)}{p_2},\frac{m_f(v)}{p_3})$. 
In the case of $\gamma_i$, this set is equal to
\begin{align*}
\{(a_i,a_j,a_s)\in (\NN^*)^3: ~a_ip_i>a_jp_j=a_sp_s=k\}=&
\begin{cases}
\{(a_i,\frac{k}{p_j},\frac{k}{p_s}):~a_i \geq \lfloor\frac{k}{p_i}\rfloor+1\}&\textrm{if }\ p_{js}|k,\\
\emptyset &\textrm{otherwise,}
\end{cases}
\end{align*}
and therefore 
$$
P_k(\gamma_i)=\sum_{l\geq 0}u^{-(\lfloor\frac{k}{p_i}\rfloor+1+l+\frac{k}{p_j}+\frac{k}{p_s})}=u^{-(\lfloor\frac{k}{p_i}\rfloor+1+\frac{k}{p_j}+\frac{k}{p_s})}\frac{u}{u-1}.
$$
Finally\coGR{,} in the case of $V_i$, this set is equal to
$$
\{(a_i,a_j,a_s)\in (\NN^*)^3: ~a_ip_i=k,~k<a_jp_j,~k<a_sp_s\}
$$
\begin{align*}
=&
\begin{cases}
\{(\frac{k}{p_i},a_j,a_s):~a_j \geq \lfloor\frac{k}{p_j}\rfloor+1,~a_s \geq \lfloor\frac{k}{p_s}\rfloor+1\}&\textrm{if }\ p_{i}|k,\\
\emptyset &\textrm{otherwise,}
\end{cases}
\end{align*}
and therefore 
$$
P_k(V_i)=\sum_{l_1,l_2\geq 0}u^{-(\frac{k}{p_i}+\lfloor\frac{k}{p_j}\rfloor+1+l_1+\lfloor\frac{k}{p_s}\rfloor+1+l_2)}=u^{-(\frac{k}{p_i}+\lfloor\frac{k}{p_j}\rfloor+1+\lfloor\frac{k}{p_s}\rfloor+1)}\big(\frac{u}{u-1}\big)^2.
$$
\end{proof}

Now we focus on the terms of the form $Q_k(\gamma)$.

\begin{lem}\label{lemQ} For $k\in \NN^*$ and $\gamma<\Gamma(f)$, we have
$$Q_k(\gamma)=\sum_{1\leq l<k}u^{-k+l}P_l(\gamma).$$
\end{lem}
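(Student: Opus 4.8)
The plan is to unwind the definitions of $Q_k(\gamma)$ and $P_l(\gamma)$ and recognise that summing $P_l(\gamma)$ over $l$ simply reorganises the sum defining $Q_k(\gamma)$ according to the value $l=m_f(a)$. Concretely, recall that
$$
Q_k(\gamma)=\medsum_{a\in(\NN^*)^3:\ m_f(a)<k,\ \gamma(a)=\gamma}u^{-k+m_f(a)-s(a)}.
$$
First I would partition the index set $\{a\in(\NN^*)^3:\ m_f(a)<k,\ \gamma(a)=\gamma\}$ according to the value $l=m_f(a)$, which ranges over $1\le l<k$ (it cannot be $0$ since $a\in(\NN^*)^3$ forces $m_f(a)>0$, as noted in section~\ref{estim-deg}). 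This gives
$$
Q_k(\gamma)=\medsum_{1\le l<k}\ \medsum_{a\in(\NN^*)^3:\ m_f(a)=l,\ \gamma(a)=\gamma}u^{-k+l-s(a)}
=\medsum_{1\le l<k}u^{-k+l}\medsum_{a\in(\NN^*)^3:\ m_f(a)=l,\ \gamma(a)=\gamma}u^{-s(a)}.
$$

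The inner sum is by definition exactly $P_l(\gamma)$, so the expression collapses to $\medsum_{1\le l<k}u^{-k+l}P_l(\gamma)$, which is the claimed identity. The only subtlety worth spelling out is that the factor $u^{-k+m_f(a)-s(a)}$ appearing in $Q_k(\gamma)$ splits cleanly as $u^{-k+l}\cdot u^{-s(a)}$ once $m_f(a)=l$ is fixed, so the power $u^{-k+l}$ can be pulled out of the inner sum; this is just the fact that these are elements of the ring $\ZZ[u][[u^{-1}]]$ (or the localised Grothendieck ring, before passing to virtual Poincar\'e polynomials), where such manipulations are valid. There is essentially no obstacle here: the identity is a bookkeeping rearrangement, and the main thing to be careful about is matching the summation ranges, namely that $m_f(a)<k$ is equivalent to $m_f(a)=l$ for some $l\in\{1,\dots,k-1\}$, and that both $P_l(\gamma)$ and $Q_k(\gamma)$ use the same face condition $\gamma(a)=\gamma$ and the same positivity constraint $a\in(\NN^*)^3$.
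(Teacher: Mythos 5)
Your proof is correct and is essentially identical to the paper's own (one-line) argument: partition the index set of $Q_k(\gamma)$ by the value $l=m_f(a)$, factor out $u^{-k+l}$, and identify the inner sum as $P_l(\gamma)$. The bookkeeping points you flag (that $m_f(a)$ is a positive integer for $a\in(\NN^*)^3$, and that the face condition and summation ranges match) are exactly the right ones and are handled correctly.
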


\begin{proof} By definition of $Q_k(\gamma)$ we have
$$Q_k(\gamma)=\sum_{1\leq l<k} ~~\sum_{a\in (\mathbb N^*)^3:\ m_f(a)=l,~\gamma(a)=\gamma}\,u^{-k+m_f(a)-s(a)}=\sum_{1\leq l<k} u^{-k+l}P_l(\gamma).$$ 
\end{proof}

\begin{prop}\label{prop-coeff} 
Let $k\in \NN^*$. 
The coefficient of $t^k$ in 
the zeta function of $f$ is given by 
$$
\beta(\mathcal A_k(f))
=
\frac{(u-1)\xi_k}
{u^{
\lfloor\frac{k}{p_1}\rfloor
+\lfloor\frac{k}{p_2}\rfloor
+\lfloor\frac{k}{p_3}\rfloor}
}
+
\medsum_{1\le l<k}
\frac{(u-1)^2\eta_l}
{u^{k-l
+\lfloor\frac{l}{p_1}\rfloor
+\lfloor\frac{l}{p_2}\rfloor
+\lfloor\frac{l}{p_3}\rfloor
}}
$$
where 
\begin{align*}
\xi_k=&
\begin{cases}
0&\textrm{if }\ p_i\nmid k\ (i=1,2,3)\\
1&\textrm{if }\ p_i\mid k,\ p_{ij}\nmid k\ (i\ne j)\\
1+u-\beta(\widehat{X}_{\gamma_s})
&\textrm{if }\ p_{ij}\mid k, \ d \nmid k, \ \{i,j,s\}=\{1,2,3\}\\
1+u+u^2-\beta(\widehat{X}_{\gamma_f}\cup \widehat X_{\gamma_1}\cup \widehat X_{\gamma_2}\cup \widehat X_{\gamma_3})
&\textrm{if }\ d\mid k
\end{cases}
\end{align*}
and
\begin{align*}
\eta_l=&
\begin{cases}
0 &\textrm{\small if $p_i\nmid l$ $(i=1,2,3)$
or $p_i\mid l$, $p_{ij}\nmid l$ $(i\ne j)$}\\
\beta(\widehat{X}_{\gamma_s})&
\textrm{if $p_{ij}\mid l$, $d \nmid l$, $\{i,j,s\}=\{1,2,3\}$}\\
\textrm{\small $\beta(\widehat{X}_{\gamma_f}\cup \widehat X_{\gamma_1}\cup \widehat X_{\gamma_2}\cup \widehat X_{\gamma_3})$}
&\textrm{if $d\mid l$}.
\end{cases}
\end{align*}
\end{prop}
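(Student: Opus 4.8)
The plan is to combine Theorem \ref{Thm1} with the explicit face-by-face computations of Lemma \ref{lemP} and Lemma \ref{lemQ}, then reorganise the resulting sum according to the divisibility of $k$ by the numbers $p_i$, $p_{ij}$ and $d$. First I would write $\beta(\mathcal A_k(f))$ using Theorem \ref{Thm1} as
$$
\beta(\mathcal A_k(f))=\medsum_{\gamma<\Gamma(f)}\bigl((u-1)^3-[X_\gamma]\bigr)P_k(\gamma)+(u-1)\medsum_{\gamma<\Gamma(f)}[X_\gamma]\,Q_k(\gamma),
$$
where the seven compact faces are $\gamma_f$, $\gamma_1,\gamma_2,\gamma_3$ and $V_1,V_2,V_3$. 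For the vertex faces one has $X_{V_i}=\emptyset$, so those contribute only through the $(u-1)^3 P_k(V_i)$ term; for the edges $\gamma_i$ one uses Remark \ref{set-intro} to write $[X_{\gamma_i}]=(u-1)^{3-1}[\widehat X_{\gamma_i}]=(u-1)^2\beta(\widehat X_{\gamma_i})$, and for $\gamma_f$ one has $[X_{\gamma_f}]=(u-1)\beta(\widehat X_{\gamma_f})$ since $\dim\gamma_f=2$.

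Next I would substitute the closed forms from Lemma \ref{lemP}. The key observation is that each $P_k(\gamma)$ carries exactly the right power of $u$ so that, after multiplying by the appropriate power of $(u-1)$ coming from $(u-1)^3-[X_\gamma]$, every surviving term has denominator $u^{\lfloor k/p_1\rfloor+\lfloor k/p_2\rfloor+\lfloor k/p_3\rfloor}$: indeed $P_k(\gamma_f)=u^{-(k/p_1+k/p_2+k/p_3)}$ with no floors when $d\mid k$; $P_k(\gamma_i)$ has one floor $\lfloor k/p_i\rfloor$ and a factor $(u-1)^{-1}$ which is cancelled by the $(u-1)$ in $[X_{\gamma_i}]=(u-1)^2\beta(\widehat X_{\gamma_i})$ or, in the $(u-1)^3$ term, leaves a net $(u-1)^2$; $P_k(V_i)$ has two floors and $(u-1)^{-2}$. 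Collecting the coefficient of $u^{-(\lfloor k/p_1\rfloor+\lfloor k/p_2\rfloor+\lfloor k/p_3\rfloor)}$ and pulling out one factor $(u-1)$ gives precisely $(u-1)\xi_k$, where $\xi_k$ is assembled from the contributions of those faces $\gamma$ that actually give a nonzero $P_k(\gamma)$: the condition $d\mid k$ activates $\gamma_f$ and all lower faces (giving $1+u+u^2$ from the three $(u-1)^3/(u-1)$-type terms minus $\beta(\widehat X_{\gamma_f}\cup\widehat X_{\gamma_1}\cup\widehat X_{\gamma_2}\cup\widehat X_{\gamma_3})$, where the union form results from additivity of $\beta$ applied to the stratification of $X_{\gamma_f}$ by the coordinate hyperplanes); the condition $p_{ij}\mid k$ but $d\nmid k$ activates $\gamma_s$ and $V_i,V_j$; the condition $p_i\mid k$ but no $p_{ij}\mid k$ activates only $V_i$; and otherwise nothing survives. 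One must check carefully the case analysis $p_1\le p_2\le p_3$ to be sure no extra faces slip in — this is where the hypothesis $p_{ij}=\lcm(p_i,p_j)$ and $d=\lcm(p_1,p_2,p_3)$ is used.

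Finally, for the sum over $l<k$ I would invoke Lemma \ref{lemQ}, which gives $Q_k(\gamma)=\sum_{1\le l<k}u^{-k+l}P_l(\gamma)$, and interchange the order of summation so that the whole $Q$-part of Theorem \ref{Thm1} becomes $\sum_{1\le l<k}u^{-k+l}\sum_{\gamma<\Gamma(f)}(u-1)[X_\gamma]P_l(\gamma)$. Running the same bookkeeping as for $\xi_k$ — but now only the terms carrying an actual factor $[X_\gamma]$ survive, since $Q_k$ is paired only with $[X_\gamma]$ and not with $(u-1)^3$ — produces the coefficient $(u-1)^2\eta_l/u^{k-l+\lfloor l/p_1\rfloor+\lfloor l/p_2\rfloor+\lfloor l/p_3\rfloor}$, where $\eta_l$ is exactly the ``$[X_\gamma]$-part'' of $\xi_l$: it vanishes unless some $\widehat X_\gamma$ is nonempty, i.e. unless $p_{ij}\mid l$, and it equals $\beta(\widehat X_{\gamma_s})$ or $\beta(\widehat X_{\gamma_f}\cup\widehat X_{\gamma_1}\cup\widehat X_{\gamma_2}\cup\widehat X_{\gamma_3})$ according to whether $d\nmid l$ or $d\mid l$. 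The main obstacle I anticipate is purely combinatorial: correctly tracking which faces $\gamma$ have nonzero $P_k(\gamma)$ for each divisibility pattern of $k$, matching powers of $(u-1)$ coming from $(u-1)^3-[X_\gamma]$ against the $(u-1)^{-\cod}$ in $P_k(\gamma)$ and the $(u-1)^{n-\dim\gamma}$ in $[X_\gamma]$, and confirming that all the $u$-exponents line up to the stated common denominators without off-by-one floor errors. Once the bookkeeping is set up cleanly, each of the four cases for $\xi_k$ and the three for $\eta_l$ follows by direct substitution.
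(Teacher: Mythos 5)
Your proposal is correct and follows essentially the same route as the paper's own proof: apply Theorem \ref{Thm1}, convert $[X_\gamma]$ to $(u-1)^{3-\dim\gamma}\beta(\widehat X_\gamma)$ via Remark \ref{set-intro} (with $[X_{V_i}]=0$ at the vertices), substitute the closed forms of Lemma \ref{lemP}, use Lemma \ref{lemQ} to turn the $Q$-part into a sum over $1\le l<k$, and sort the surviving faces by the divisibility of $k$ (resp.\ $l$) by $p_i$, $p_{ij}$, $d$. The bookkeeping of $(u-1)$-powers and floor exponents you describe is exactly what the paper carries out.
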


\begin{proof} For a compact face $\gamma$ of 
$\Gamma_+(f)$, we know from Remark \ref{set-intro} that  $$\beta(X_{\gamma})=(u-1)^{3-\dim \gamma}\beta(\widehat X_{\gamma}).$$
Moreover, if $\gamma=V_i$ is a vertex, where $i=1,2,3$, then $\beta(X_{V_i})=0$.
Combined with the formula given \coGR{in} Theorem \ref{Thm1}, 
we obtain that $\beta(\mathcal A_k(f))$ is equal to the sum of 
the contribution of the compact facet $\gamma_f$
$$
(u-1)\big((u-1)^2-\beta(\widehat X_{\gamma_f})\big)P_k(\gamma_f)
+(u-1)^2\beta(\widehat X_{\gamma_f})Q_k(\gamma_f),
$$
with the contribution of the $1$-dimensional faces $\gamma_1,\gamma_2,\gamma_3$,
$$
(u-1)^2\sum_{i=1}^3 \big((u-1)-\beta(\widehat X_{\gamma_i})\big)P_k(\gamma_i) +(u-1)^3\sum_{i=1}^3\beta(\widehat X_{\gamma_i})Q_k(\gamma_i),
$$
\coGR{and} with the contribution of the vertexes $P_1,P_2,P_3$\coGR{,} 
$$
(u-1)^3\sum_{i=1}^3 P_k(V_i).
$$
It remains to describe each contribution with respect to the relations 
between $p_1,p_2,p_3$ and $k$. 
We begin with $\xi_k$. By Lemma \ref{lemP}, it is clear that $\xi_k=0$ 
if none of the $p_i$ divides $k$ for $i=1,2,3$. 
If $p_i\mid k$ but $ p_{ij}\nmid k$ for $i\ne j$, 
then the only face contributing to $\xi_k$ is $V_i$ and by Lemma \ref{lemP}
$$
(u-1)^3P_k(V_i)=(u-1)u^{-(\frac{k}{p_i}
+\lfloor\frac{k}{p_j}\rfloor
+\lfloor\frac{k}{p_s}\rfloor)}
$$
so that $\xi_k=1$ as \coGR{claimed}. 
Assume that $p_{ij}\mid k$ but $d \nmid k$ and denote $\{i,j,s\}=\{1,2,3\}$. 
Then the contribution to $\xi_k$ comes from the faces $V_i,V_j$ and $\gamma_s$, 
and this contribution is equal to
$$(u-1)^2\big((u-1)-\beta(\widehat X_{\gamma_s})\big)P_k(\gamma_s)+(u-1)^3P_k(V_i)+(u-1)^3P_k(V_j)$$
$$=u^{-(\frac{k}{p_i}
+\frac{k}{p_j}
+\lfloor\frac{k}{p_s}\rfloor)}\big((u-1)((u-1)-\beta(\widehat X_{\gamma_s}))+2(u-1) \big)$$
by Lemma \ref{lemP}, so that $\xi_k=1+u-\beta(\widehat X_{\gamma_s})$ as expected. 
Finally, assume that $d\mid k$. Then all the faces contribute to $\xi_k$ and the contribution is equal to
$$(u-1)u^{-(\frac{k}{p_i}+\frac{k}{p_j}+\frac{k}{p_s})}
\big(
(u-1)^2-\beta(\widehat X_{\gamma_f})
+\sum_{i=1}^3 \big((u-1)-\beta(\widehat X_{\gamma_i})\big)
+3
\big)$$
$$=(u-1)u^{-(\frac{k}{p_i}+\frac{k}{p_j}+\frac{k}{p_s})}(u^2+u+1-\beta(\widehat X_{\gamma_f})-\sum_{i=1}^3 \beta(\widehat X_{\gamma_i})),$$
so that $\xi_k=u^2+u+1-\beta(\widehat X_{\gamma_f}\cup \widehat X_{\gamma_1}\cup \widehat X_{\gamma_2}\cup \widehat X_{\gamma_3})$ as expected.

Now we focus on $\eta_l$, with $1\leq l<k$. First of all, the contribution of the terms of the form $Q_k(\gamma)$ to $\beta(\mathcal A_k(f))$ is given by
$$
(u-1)^2\beta(\widehat X_{\gamma_f})Q_k(\gamma_f)+(u-1)^3\sum_{i=1}^3\beta(\widehat X_{\gamma_i})Q_k(\gamma_i),
$$
which is equal to
$$
(u-1)^2\beta(\widehat X_{\gamma_f})\sum_{1\leq l<k}u^{l-k}P_l(\gamma_f)+(u-1)^3\sum_{i=1}^3\beta(\widehat X_{\gamma_i})\sum_{1\leq l<k}u^{l-k}P_l(\gamma_i)
$$
by Lemma \ref{lemQ}.
In particular, using Lemma \ref{lemP},
\begin{itemize}
\item if $p_i\nmid l$ for $i=1,2,3$, or if $p_i\mid l$ but $p_{ij}\nmid l$ for $j\ne i$, then $P_l(\gamma_f)=P_l(\gamma_i)=0$ for $1\leq l<k$ and $i=1,2,3$, therefore $\eta_l=0$,
\item if $p_{ij}\mid l$ with $i,j\in \{1,2,3\}$ but $d \nmid l$, then $\eta_l=\beta(\widehat X_{\gamma_s})$, where $\{i,j,s\}=\{1,2,3\}$, 
\item if $d\mid l$, then $\eta_l=\beta(\widehat X_{\gamma_f})+\sum_{i=1}^3\beta(\widehat X_{\gamma_i})$.
\end{itemize}

\end{proof}

\begin{cor} 
Let $k\in\NN^*$. 
The degree of the virtual Poincar\'e polynomial of $\mathcal A_k(f)$ 
\coGR{satisfies the bound}
$$
\deg \beta(\mathcal A_k(f))\le
3
-\lfloor\frac{k}{p_1}\rfloor
-\lfloor\frac{k}{p_2}\rfloor
-\lfloor\frac{k}{p_3}\rfloor.
$$
\end{cor}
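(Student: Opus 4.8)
The plan is to read the bound off the explicit formula for $\beta(\mathcal A_k(f))$ given in Proposition \ref{prop-coeff}, estimating its two groups of terms separately. Write $N(m)=\lfloor m/p_1\rfloor+\lfloor m/p_2\rfloor+\lfloor m/p_3\rfloor$, so that the assertion to be proved is $\deg\beta(\mathcal A_k(f))\le 3-N(k)$.

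First I would record the degree estimates for the algebraic sets entering $\xi_k$ and $\eta_l$. By Remark \ref{set-intro}, for a compact face $\gamma$ of $\Gamma_+(f)$ the variety $\widehat X_\gamma$ lies in $(\RR^*)^{\dim\gamma}$ and, when non-empty, has dimension $\dim\gamma-1$; hence $\deg\beta(\widehat X_{\gamma_f})\le 1$, $\deg\beta(\widehat X_{\gamma_i})\le 0$, and $\beta(X_{V_i})=0$. Running through the four cases for $\xi_k$ in Proposition \ref{prop-coeff} this yields $\deg\xi_k\le 2$, and likewise $\deg\eta_l\le 1$, with $\deg\eta_l\le 0$ except possibly when $d\mid l$. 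The first term of $\beta(\mathcal A_k(f))$ is $(u-1)\xi_k\,u^{-N(k)}$, of degree at most $1+\deg\xi_k-N(k)\le 3-N(k)$, so the claimed bound already holds for it.

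For the sum $\sum_{1\le l<k}(u-1)^2\eta_l\,u^{-(k-l)-N(l)}$, the $l$-th term has degree at most $2+\deg\eta_l-(k-l)-N(l)$, so it is enough to establish, for every $l<k$ with $\eta_l\ne 0$, the arithmetic inequality
$$
N(k)-N(l)\ \le\ (k-l)+\bigl(1-\deg\eta_l\bigr).
$$
To exploit this one uses that $\eta_l\ne 0$ forces $\lcm(p_i,p_j)\mid l$ for some $i\ne j$, hence $p_i\mid l$ for at least two indices, while $\deg\eta_l=1$ can only happen when $d=\lcm(p_1,p_2,p_3)$ divides $l$, i.e. when all three $p_i$ divide $l$. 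Now $\lfloor k/p_i\rfloor-\lfloor l/p_i\rfloor$ is the number of multiples of $p_i$ in $(l,k]$; it equals $\lfloor(k-l)/p_i\rfloor$ when $p_i\mid l$, and is at most $\lfloor(k-l)/p_i\rfloor+1$ in general. Plugging these counts into the inequality, and using $p_1\le p_2\le p_3$ together with the relations $p_{ij}=\lcm(p_i,p_j)$ and $d=\lcm(p_1,p_2,p_3)$, should give the estimate. An equivalent route is to observe, from Lemma \ref{lemP}, that $\deg P_k(\gamma)\le -N(k)$ for every compact face $\gamma$ of $\Gamma_+(f)$, and then to bound each $Q_k(\gamma)=\sum_{1\le l<k}u^{l-k}P_l(\gamma)$ term by term in the same manner.

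The main obstacle is precisely this arithmetic inequality: the floors $\lfloor k/p_i\rfloor$ do not behave additively, and the estimate is tight exactly when several multiples of the distinct $p_i$ pile up just below $k$, which is the situation for $k$ a multiple of $d$ — the very case feeding both $P_k(\gamma_f)$ and the degree-one part of the relevant $\eta_l$. Sorting out exactly which divisibilities $\eta_l\ne 0$ imposes, and handling the borderline case $d\mid l$ with care, is where the real work lies; once this combinatorial point is in place, assembling the contributions is routine bookkeeping against the formula of Proposition \ref{prop-coeff}.
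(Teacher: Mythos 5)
The paper gives no proof of this corollary at all: it is stated as an immediate consequence of Proposition \ref{prop-coeff}, and your reduction --- bounding the $\xi_k$-term by $1+\deg\xi_k-N(k)\le 3-N(k)$ and reducing each $\eta_l$-term to the arithmetic inequality $N(k)-N(l)\le (k-l)+1-\deg\eta_l$ --- is exactly the route the authors must have in mind. Your degree estimates $\deg\xi_k\le 2$ and $\deg\eta_l\le 1$ (with $\deg\eta_l\le 0$ unless $d\mid l$ and $X_{\gamma_f}\ne\emptyset$) are correct.

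However, the step you defer as the place ``where the real work lies'' is a genuine gap, and it cannot be closed in the stated generality, because the arithmetic inequality is false without an extra hypothesis. In the critical case $d\mid l$ all three $p_i$ divide $l$, so the inequality reads $\sum_{i=1}^3\lfloor (k-l)/p_i\rfloor\le k-l$, which requires $\tfrac1{p_1}+\tfrac1{p_2}+\tfrac1{p_3}\le 1$, i.e. $h(v)\ge 0$. When $h(v)<0$ it fails, and so does the corollary: for $f=x_1^2+x_2^2-x_3^2$ (so $p_1=p_2=p_3=2$, $h(v)=-1$, $X_{\gamma_f}\ne\emptyset$) and $k=4$, the $l=2$ term of Proposition \ref{prop-coeff} is $(u-1)^2\eta_2\,u^{-5}$ with $\deg\eta_2=\deg\beta(\widehat X_{\gamma_f})=1$, hence of degree $-2$, while every other term has degree at most $-3$; thus $\deg\beta(\mathcal A_4(f))=-2$, exceeding the claimed bound $3-N(4)=-3$. (This is consistent with Theorem \ref{LinearBound}, which for $e_f<0$ only gives $3-k+\max\{ke_f,e^0_f\}=-2$ here.) So you should flag that the statement needs the hypothesis $h(v)\ge 0$; under that hypothesis your outline does complete: for $d\mid l$ one gets $N(k)-N(l)=\sum_i\lfloor (k-l)/p_i\rfloor\le (k-l)\sum_i 1/p_i\le k-l$, and for $p_{ij}\mid l$, $d\nmid l$ the single index $s$ with $p_s\nmid l$ contributes at most one extra multiple, giving $N(k)-N(l)\le (k-l)+1$, which matches $\deg\eta_l\le 0$ in that case.
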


\begin{rem} Note in particular that
\begin{itemize}
\item  
if $p_1<p_2\le p_3$, then $\beta(\mathcal A_{p_1}(f))=\frac{u-1}{u}$.
\item
if $p_1=p_2<p_3$, then 
$\beta(\mathcal A_{p_1}(f))=\frac{(u-1)(1+u-\beta(\widehat{X}_{\gamma_3}))}{u^2}$. 
\item 
if $p_1=p_2=p_3$, then 
$\beta(\mathcal A_{p_1}(f))
=\frac{(u-1)(1+u+u^2-\beta(\widehat{X}_{\gamma_f}\cup \widehat X_{\gamma_1}\cup \widehat X_{\gamma_2}\cup \widehat X_{\gamma_3}))}{u^3}$. 
\end{itemize}
\end{rem}

However, in order to recover the weights of $f$, it will be enough to
concentrate the study on some specific parts of the zeta function. 
Actually, it is enough to recover the integers $p_1,p_2$ and $p_3$, 
together with $m_f(v)$, from
the zeta function of $f$ since $w_i=m_f(v)/p_i$ for $i=1,2,3$. 
Note that we already recover the multiplicity of $f$, 
that is $p_1$, as the order of
the zeta function. Moreover we know from Proposition \ref{signh} how
to recover the sign of $h(v)$.

In the particular case of $h(v)<0$ the function
$f$ has only \textit{simple} singularities in the sense of Arnold \cite{Arnold}
and we already know how to recover the
weights from \cite{simple}.

In the general situation, namely when $h(v)\geq 0$, we obtain 
$$e_f=\frac{h(v)}{m_f(v)}=1-\frac{1}{p_1}-\frac{1}{p_2}-\frac{1}{p_3}$$ 
by Proposition \ref{prop-posi} and $m_f(v)$ by Corollary \ref{corA}. 
\coGR{T}herefore \coGR{it is} sufficient to find $p_2$ or $p_3$ 
in order to recover all the weights. 
We are going to recover $p_2$ by a direct analysis of the first terms of 
the zeta function of $f$. 
More precisely, the idea is to recover $p_2$ in the zeta function as the first 
contribution that does not come from the vertex $(p_1,0,0)$ of the Newton 
polyhedron of $f$. 
Recall that $A(f)$ denote\coGR{s} the set of Fukui invariants 
$A(f)=\{k\in \NN^*:\mathcal
A_k\neq \emptyset \}$.

We treat the cases $p_1$ even and $p_1$ odd separately.
In \coGR{the} case $p_1$ is odd, note that $X_{\gamma_3}$ is not empty, 
so that $\beta(X_{\gamma_3})\neq 0$ and
therefore $A(f)\cap \mathbb N_{\geq p_{12}}=
\mathbb N_{\geq p_{12}} $. 

Set 
\begin{align*}
\alpha=&\min \{l\in \mathbb N^*:A(f)\cap \mathbb N_{\geq l}=
\mathbb N_{\geq l}\},\\
\beta=&\min \{l\in \mathbb N^*: \beta(\mathcal A_l)
\neq 0,\ p_1 \nmid l\}.
\end{align*}
As $p_1$ is odd, note that $\alpha \leq p_{12}$.
\begin{lem}
Assume that $p_1$ is odd. 
\begin{itemize}
\item If $p_1 \nmid \beta -1$, then $p_2=\beta$.
\item If $p_1 \mid \beta -1$ and $\beta -1< \alpha$, then
$p_2=\beta$.
\item  If $p_1 \mid \beta -1$ and $\beta -1= \alpha$, then either
$p_2=\beta -1$ or $p_2=p_3=\beta$.
\end{itemize}
\end{lem}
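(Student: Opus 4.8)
The plan is to read off $p_1,p_2,p_3$ from the set of Fukui invariants $A(f)=\{k\in\NN^*:\beta(\mathcal A_k(f))\ne0\}$, which is exactly the set of exponents occurring in the zeta function of $f$. Throughout I would use the description $A(f)=S(f)\cup T(f)$ with $T(f)=\{m\in\NN:m\ge m_0(f)\}$ from the remark after Lemma~\ref{KeyLem}, and I may assume $p_1\ge3$, the case $p_1=1$ being vacuous.

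First I would set up the cone combinatorics of $\Gamma_+(f)$. For $a\in(\NN^*)^3$ the face $\gamma(a)$ is one of $\gamma_f,\gamma_1,\gamma_2,\gamma_3,V_1,V_2,V_3$, and $m_f(a)$ is a multiple of $p_i$ if $\gamma(a)=V_i$, a common multiple of $p_j,p_s$ if $\gamma(a)=\gamma_i$ (with $\{i,j,s\}=\{1,2,3\}$), and a multiple of $d$ if $\gamma(a)=\gamma_f$; so $m_f(a)$ is always a multiple of at least one $p_i$, and $m_f(a)<p_2$ forces $\gamma(a)=V_1$, hence $m_f(a)\in p_1\NN$. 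Since the minimal multiplicity on a non-vertex compact face is $\lcm(p_j,p_s)\ge p_2$ (or $d\ge p_2$), while the $f_{V_i}$ are definite, this gives $m_0(f)\ge p_2$; combined with the previous remark it gives $A(f)\cap\NN_{<p_2}=p_1\NN\cap\NN_{<p_2}$, while every multiple of $p_1$ lies in $A(f)$ (realise $m_f(a)=jp_1$ with $\gamma(a)=V_1$). In particular $\beta\ge p_2$.

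Then I would determine $\beta$ according to whether $p_1$ divides $p_2$. If $p_1\nmid p_2$, the point $a=(p_2,1,p_2)$ has $\gamma(a)=V_2$ and $m_f(a)=p_2$, so $p_2\in A(f)$ with $p_1\nmid p_2$, giving $\beta=p_2$. If $p_1\mid p_2$, then --- and this is where the hypothesis that $p_1$ is odd enters --- $X_{\gamma_3}\ne\emptyset$, and since the minimal multiplicity along the cone of $\gamma_3$ is $\lcm(p_1,p_2)=p_2$, one gets $m_0(f)\le p_2$, hence $m_0(f)=p_2$; thus $\NN_{\ge p_2}\subseteq A(f)$, so $\alpha=p_2$ (because $p_2-1\notin A(f)$, being $<p_2$ and not a multiple of $p_1$), while $\beta=p_2+1$ since $p_2\in p_1\NN$ but $p_2+1\notin p_1\NN$. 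This disposes of the first two cases of the lemma: $p_1\mid p_2$ would produce $p_1\mid\beta-1$ (with $\beta-1=p_2$) in the first case and $\alpha=p_2=\beta-1$ in the second, both excluded, so in those cases $p_1\nmid p_2$ and $\beta=p_2$; and $p_1\mid p_2$ in the third case yields $p_2=\beta-1$.

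The remaining point, which I expect to be the main obstacle, is the third case when $p_1\nmid p_2$: there $\beta=p_2$, $p_1\mid p_2-1$ and $\alpha=p_2-1$, and I must deduce $p_2=p_3$ (so that $p_2=p_3=\beta$). From $\alpha=p_2-1$ one has $\NN_{\ge p_2-1}\subseteq A(f)$, hence $p_2+1\in A(f)$. If $p_2+1\ge m_0(f)$, then $m_0(f)\in\{p_2,p_2+1\}$; since $m_0(f)$ is a common multiple of two of the $p_i$ while $p_1\nmid p_2$, $p_2\nmid p_2+1$, and $p_1\nmid p_2\pm1$ (because $p_1\mid p_2-1$ and $p_1\ge3$), the only option is $m_0(f)=\lcm(p_2,p_3)=p_2$, forcing $p_3=p_2$. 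If instead $p_2+1<m_0(f)$, then $p_2+1\in S(f)$, so it is a multiple of some $p_i$; as $p_1,p_2\nmid p_2+1$ this forces $p_3=p_2+1$, whence each of $\lcm(p_1,p_2),\lcm(p_1,p_3),\lcm(p_2,p_3),d$ is $\ge3p_2$ (using $p_1\nmid p_2$, $p_1\nmid p_3$, $p_1$ odd, $p_3=p_2+1$), so $m_0(f)\ge3p_2$; then $\{p_2+2,\dots,2p_2-1\}\subseteq S(f)$ (these integers being $<m_0(f)$), and since this interval contains no multiple of $p_2$ nor of $p_3=p_2+1$, each of its $p_2-2$ consecutive elements (note $p_2\ge4$ as $p_1\nmid p_2$, $p_1\ge3$) would be a multiple of $p_1\ge3$, which is impossible. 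So $p_3=p_2$ here too, and the lemma follows. The bulk of the argument is routine bookkeeping with the cone decomposition of $\Gamma_+(f)$ and the description $A(f)=S(f)\cup T(f)$; the delicate step is this last exclusion of $p_3=p_2+1$ by the counting on $\{p_2+2,\dots,2p_2-1\}$.
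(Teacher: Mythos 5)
Your proof is correct and follows the same strategy as the paper's: read $p_1,p_2,p_3$ off the set $A(f)=S(f)\cup T(f)$ of exponents of the zeta function, and split according to whether $p_1$ divides $p_2$, the oddness of $p_1$ entering through $X_{\gamma_3}\ne\emptyset$ and hence $m_0(f)\le p_{12}$. In fact your treatment of the third bullet is more complete than the paper's: the published proof only establishes the dichotomy ``$p_2=\beta-1$ or $p_2=\beta$'' and never justifies the additional claim $p_3=\beta$ in the second alternative, whereas your case analysis on whether $p_2+1$ lies in $T(f)$ or in $S(f)$, followed by the exclusion of $p_3=p_2+1$ via the interval $\{p_2+2,\dots,2p_2-1\}$, supplies exactly the missing argument. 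The arithmetic there is sound: $p_2\ge4$, $m_0(f)\ge 3p_2$ when $p_3=p_2+1$, the interval contains no multiple of $p_2$ or $p_3$, and it contains two consecutive integers which cannot both be multiples of $p_1\ge3$.
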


\begin{proof} 
If $p_1$ divides $p_2$, then 
$\alpha=p_2$ and $\beta=\alpha+1$, so that $p_1$ divides $\beta-1$. 
As $\beta=p_2$ if $p_1$ does not divide $p_2$, we obtain the first \coGR{claim}.

Now, if $p_1$ divides $\beta-1$, then either $p_1$ divides $p_2=\beta-1$ and
$\alpha=p_2$ or  $p_1$ divides $p_2-1$ and $p_2=\beta$.
\end{proof}

In particular, if $p_1 \mid \beta -1$ and $\beta = \alpha +1$, we obtain
two possibilities for the value of $p_2$. We show below that only
one of these possibilities gives the correct value for the sum
$\frac{1}{p_1}+\frac{1}{p_2}+\frac{1}{p_3}$, except in one particular
case that we need to treat separately.

\begin{lem}
Assume that $p_1 \mid \beta -1$ and $\beta = \alpha +1$. 
Assume moreover
that the value of $\frac{1}{p_1}+\frac{1}{p_2}+\frac{1}{p_3}$ is
given. Then we can decide whether
$p_2=\alpha$ or $p_2=\beta$, except in the cases $(p_1,p_2,p_3)=(3,4,4)$ or $(3,3,6)$.
\end{lem}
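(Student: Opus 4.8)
The plan is to reduce the statement to a short piece of elementary number theory. In the situation at hand we already know $p_1$ (it is the order of the zeta function of $f$), the integers $\alpha$ and $\beta$ (they are read off from the set $A(f)$ of Fukui invariants and from the vanishing of the coefficients $\beta(\mathcal A_k(f))$), and, since we are in the case $h(v)\ge 0$, also the number $S:=\frac1{p_1}+\frac1{p_2}+\frac1{p_3}=1-e_f$, by Proposition \ref{prop-posi}. By the previous lemma, under the hypotheses $p_1$ odd, $p_1\mid \beta-1$ and $\beta=\alpha+1$, the true triple $(p_1,p_2,p_3)$ has one of two shapes: shape (A), with $p_2=\alpha$ (hence $p_1\mid\alpha$, and then $p_3\ge\alpha$ is forced by $S$), or shape (B), with $p_2=p_3=\beta$.

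The first step is to note that from the available data $(p_1,\alpha,\beta,S)$ one can recover $p_2$ unless \emph{both} shapes are arithmetically realizable, i.e.\ unless there exist simultaneously a triple of shape (A) and a triple of shape (B), each satisfying $p_1\le p_2\le p_3$ and having weight-sum equal to $S$. Indeed the true triple realizes its own shape, so if only one shape is realizable it must be that one and $p_2$ is determined; thus it remains only to pin down when both shapes are realizable.

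The second step is the arithmetic. Assume both shapes are realizable for the data $(p_1,\alpha,\beta,S)$. Shape (B) forces $S=\frac1{p_1}+\frac2{\beta}=\frac1{p_1}+\frac2{\alpha+1}$, while shape (A) forces $\frac1{p_3}=S-\frac1{p_1}-\frac1{\alpha}$ for some integer $p_3\ge\alpha$. Subtracting these two identities gives
$$\frac1{p_3}=\frac2{\alpha+1}-\frac1{\alpha}=\frac{\alpha-1}{\alpha(\alpha+1)},\qquad\text{hence}\qquad p_3=\frac{\alpha(\alpha+1)}{\alpha-1}.$$
Since $\alpha(\alpha+1)=(\alpha-1)(\alpha+2)+2$, integrality of $p_3$ forces $(\alpha-1)\mid 2$. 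Now $p_1\mid\alpha$ and $p_1\ge3$ (it is odd, and $p_1=1$ would give $S>1$, contradicting $e_f\ge 0$), so $\alpha\ge p_1\ge 3$ and $\alpha-1\ge 2$; therefore $\alpha-1=2$, i.e.\ $\alpha=3$. Then $p_1\mid 3$ with $p_1\ge 3$ odd gives $p_1=3$, whence $\beta=4$ and $p_3=\frac{3\cdot 4}{2}=6$; shape (A) is the triple $(3,3,6)$ and shape (B) is the triple $(3,4,4)$. Conversely, these two triples both produce the data $(p_1,\alpha,\beta,S)=(3,3,4,\tfrac56)$, so in these cases the two shapes genuinely cannot be separated, while in every other case exactly one shape is realizable and $p_2$ is read off accordingly.

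I expect the only real care to be needed in the bookkeeping of the first and last steps: matching the two shapes (A) and (B) with the two alternatives of the previous lemma, and checking that the triples $(3,3,6)$ and $(3,4,4)$ really do yield the same $\alpha$ and $\beta$ (and in fact the same zeta function), which is a short verification using the explicit formulas of Lemma \ref{lemP} and Lemma \ref{lemQ} together with the description of $A(f)$ through the Fukui invariants. There is no analytic difficulty: the heart of the argument is the elementary identity $\alpha(\alpha+1)=(\alpha-1)(\alpha+2)+2$.
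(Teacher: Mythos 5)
Your proof is correct and follows essentially the same route as the paper: both reduce the ambiguity to the identity $l=\frac{\alpha(\alpha+1)}{\alpha-1}$ and use integrality (via $\alpha(\alpha+1)=(\alpha-1)(\alpha+2)+2$) to force $\alpha=3$, $l=6$. You merely make explicit two points the paper leaves implicit, namely the exclusion of $\alpha=2$ via $p_1\mid\alpha$, $p_1$ odd and $e_f\ge 0$, and the converse check that $(3,3,6)$ and $(3,4,4)$ genuinely yield the same data.
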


\begin{proof}
Assume that $(p_2,p_3)=(\beta,\beta)$
and $(p'_2,p'_3)=(\alpha,l)$ with $l\geq \alpha$ satisfying
$$
\frac{1}{p_2}+\frac{1}{p_3}=\frac{1}{p'_2}+\frac{1}{p'_3}.
$$
Then $l=\frac{\alpha(\alpha+1)}{\alpha-1}$ should be \coGR{an} integer\coGR{, 
and} therefore $\alpha=3$ and $l=6$. 
\end{proof}

In \coGR{the} case $p_1$ is even, it may 
\coGR{be possible} that $\alpha=\infty$ (only if $\beta(X_{\gamma_f})=0$), 
and even $\beta=\infty$ (only if $p_1$ divides $p_2$ and $p_3$). 
Therefore we need to take care also about the coefficients of
the zeta function, as described in Proposition \ref{prop-coeff}. 
Denote by $J$ a third root of unity different from 1 and set
$$
\delta=\min \{l\in \NN^*: -1 \textrm{ or } J \textrm{ is a zero of } 
\beta(\mathcal A_l) \}.
$$ 
\begin{lem}
Assume that $p_1$ is even. Then $p_2=\beta$ if $\alpha = \beta -1$ and 
$\beta(\mathcal A_{\alpha})=u ^{-\frac{\alpha}{p_1}}(u-1)$. 
In the other cases\coGR{,} $p_2=\min \{\alpha,\beta,\delta\}$.
\end{lem}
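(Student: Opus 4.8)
The plan is to reduce everything to a direct reading of the first coefficients of the zeta function. By the discussion preceding the lemma, we already recover from the zeta function the order $p_1$ (the multiplicity of $f$), the rational number $e_f=1-\frac1{p_1}-\frac1{p_2}-\frac1{p_3}$ (by Propositions \ref{signh} and \ref{prop-posi}), and the integer $d=m_f(v)$ (by Corollary \ref{corA}), using $h(v)\ge 0$; since $w_i=d/p_i$, it suffices to recover $p_2$. Two inputs will be used repeatedly. First, by Lemmas \ref{lemP}, \ref{lemQ} and Proposition \ref{prop-coeff}, for $1\le k<p_{12}:=\lcm(p_1,p_2)$ only the vertices $V_1,V_2,V_3$ contribute and, since $X_{V_i}=\emptyset$,
$$\beta(\mathcal A_k(f))=N_k\,(u-1)\,u^{-(\lfloor k/p_1\rfloor+\lfloor k/p_2\rfloor+\lfloor k/p_3\rfloor)},\qquad N_k=\#\{i:p_i\mid k\};$$
in particular for $k<p_2$ this is $0$ unless $p_1\mid k$, in which case it equals $(u-1)u^{-k/p_1}$, whose only root is $u=1$. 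Second, by the remark following Lemma \ref{KeyLem}, $A(f)=S(f)\cup T(f)$ with $S(f)=p_1\NN^*\cup p_2\NN^*\cup p_3\NN^*$ and $T(f)=\{m\in\NN:m\ge m_0(f)\}$, where $m_0(f)$ is the least value of $m_f$ on the cones of the non-definite compact faces; since the $V_i$ are definite while the cone of an edge $\gamma_i$ (resp. of $\gamma_f$) has minimal $m_f$-value a pairwise lcm $p_{jk}\ge p_2$ (resp. $d\ge p_2$), we get $m_0(f)\ge p_2$. Consequently $\alpha\ge p_2-1$, and $\alpha=p_2-1$ forces $p_1\mid p_2-1$ (the integers of $[\alpha,p_2)$ all lie in $S(f)$, hence are multiples of $p_1$ being below $p_2\le p_3$, and two of these cannot be consecutive).

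For the first assertion, assume $\alpha=\beta-1$ and $\beta(\mathcal A_\alpha)=u^{-\alpha/p_1}(u-1)$. Examining Proposition \ref{prop-coeff} together with the degree estimate $\deg\beta(\mathcal A_k)\le 3-\lfloor k/p_1\rfloor-\lfloor k/p_2\rfloor-\lfloor k/p_3\rfloor$: the right-hand side $u^{-\alpha/p_1}(u-1)$ is a nonzero multiple of $(u-1)$ by a single power of $u$, which (ruling out the $(u-1)^2$-terms and a nontrivial $\xi$-factor by degree bookkeeping) is only compatible with the formula when $p_1\mid\alpha$ and $\lfloor\alpha/p_2\rfloor=\lfloor\alpha/p_3\rfloor=0$, i.e. $\alpha<p_2$. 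Now $\alpha\in A(f)$ and $\alpha<p_2\le m_0(f)$ give $\alpha\in S(f)$, while minimality of $\alpha$ gives $\alpha-1\notin A(f)$, hence $\alpha-1\notin S(f)$, i.e. $p_1\nmid\alpha-1$. Since $[\alpha,\infty)\subseteq A(f)$ while $[\alpha,p_2)$ misses $T(f)$, every integer of $[\alpha,p_2)$ lies in $S(f)$, hence is a multiple of $p_1$; as $p_1\ge 2$ forbids two consecutive such, $[\alpha,p_2)=\{\alpha\}$, so $\alpha=p_2-1$ and $\beta=\alpha+1=p_2$, which is the claim.

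For the second assertion we suppose we are not in that case; then $\alpha\ne p_2-1$, so $\alpha\ge p_2$ by the first paragraph, and also $\beta\ge p_2$ and $\delta\ge p_2$, because by the displayed formula no $l<p_2$ yields a nonzero coefficient other than one of the form $(u-1)u^{-(\cdots)}$ (which has neither $-1$ nor $J$ as a root, and is nonzero only when $p_1\mid l$). Hence $\min\{\alpha,\beta,\delta\}\ge p_2$, and it remains to exhibit one of $\alpha,\beta,\delta$ equal to $p_2$. If $p_1\nmid p_2$, then $p_2\in S(f)\subseteq A(f)$ whereas every $l<p_2$ with $p_1\nmid l$ lies outside $A(f)$, so $\beta=p_2$. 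If $p_1\mid p_2$, then $p_{12}=p_2$ and we examine $k=p_2$, where $\eta_l=0$ for $l<p_2$ (the smallest relevant lcm being $p_2$): if the edge $\gamma_3$ (resp., when $p_3=p_2$ so that $d=p_2$, all of $\gamma_3,\gamma_1,\gamma_2,\gamma_f$) is definite then $\xi_{p_2}=1+u$ (resp. $1+u+u^2$), so $\beta(\mathcal A_{p_2})$ vanishes at $u=-1$ (resp. at $u=J$) and $\delta=p_2$; otherwise one of these faces is non-definite, its cone has minimal $m_f$-value $p_2$, hence $m_0(f)\le p_2$ and $\alpha=p_2$. Either way $\min\{\alpha,\beta,\delta\}=p_2$, completing the proof.

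The main obstacle is the case $p_1\mid p_2$, where $p_2$ is itself a multiple of $p_1$ (so invisible to $\beta$) and both $\alpha$ and $\delta$ genuinely depend on which faces of $\Gamma_+(f)$ are definite; running this definiteness dichotomy and checking that exactly one of $\alpha,\delta$ lands on $p_2$ is the technical core. Here the hypothesis ``$p_1$ even'' is essential: for non-degenerate $f$ the number $\beta(\widehat X_{\gamma_3})=\#\widehat X_{\gamma_3}$ is the number of nonzero simple real roots of the degree-$\gcd(p_1,p_2)$ reduced polynomial of the edge $\gamma_3$, hence is $\equiv\gcd(p_1,p_2)\pmod 2$; when $p_1\mid p_2$ this is $\equiv p_1\equiv 0$, so $\#\widehat X_{\gamma_3}$ is even and $\xi_{p_2}=1+u-\#\widehat X_{\gamma_3}$ is never $u$, which is exactly why $\beta(\mathcal A_{p_2})$ cannot take the exceptional form $u^{-p_2/p_1}(u-1)$ in this case. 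This dovetails with the first assertion, where the exceptional form arises precisely from the genuine arithmetic coincidence $\alpha=p_2-1$ with $p_1\mid p_2-1$ — as for $(p_1,p_2,p_3)=(2,3,5)$ with $X_{\gamma_3}\ne\emptyset$, where $\alpha=2<p_2$ would otherwise spoil the formula $p_2=\min\{\alpha,\beta,\delta\}$.
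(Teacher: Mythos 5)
Your overall route is the same as the paper's: compute the first coefficients of the zeta function via Proposition \ref{prop-coeff}, use the decomposition $A(f)=S(f)\cup T(f)$ to locate $\alpha$ and $\beta$, and split according to whether $p_1$ divides $p_2$ and whether $p_2=p_3$. Your reorganization --- first proving $m_0(f)\ge p_2$, hence $\min\{\alpha,\beta,\delta\}\ge p_2$ outside the exceptional case, then exhibiting one of the three invariants equal to $p_2$ --- is if anything cleaner than the paper's four-case discussion, and your definiteness dichotomy at $k=p_2$ correctly reproduces the paper's treatment of the second assertion.

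The gap is in the first assertion, at the step you dismiss as ``degree bookkeeping'': the claim that $\beta(\mathcal A_\alpha)=u^{-\alpha/p_1}(u-1)$ forces $\alpha<p_2$. This is not a degree estimate; it is precisely the delicate point of the lemma and the only place where the hypothesis that $p_1$ is even does any work. Concretely, if $p_1\mid p_2<p_3$ and $X_{\gamma_3}\ne\emptyset$, then $\alpha=p_2$ and
$\beta(\mathcal A_{p_2})=u^{-p_2/p_1-1}(u-1)\bigl(u+1-\beta(\widehat{X}_{\gamma_3})\bigr)$,
which has exactly the exceptional shape when $\beta(\widehat{X}_{\gamma_3})=1$; excluding this requires the parity argument ($\beta(\widehat{X}_{\gamma_3})\equiv\gcd(p_1,p_2)=p_1\equiv0\pmod 2$), which you mention only in your informal closing paragraph and never incorporate into the proof itself. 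In the sub-case $p_1\mid p_2=p_3$ with some face non-definite, one has $\beta=p_2+1=\alpha+1$ automatically, so the same step also requires excluding $\beta(\widehat{X}_{\gamma_f}\cup\widehat{X}_{\gamma_1}\cup\widehat{X}_{\gamma_2}\cup\widehat{X}_{\gamma_3})=u+1$, which you do not address (the paper's proof is silent on this point as well). Separately, your second paragraph opens by deducing $\alpha\ne p_2-1$ from the failure of the exceptional hypothesis; this is the converse implication ``$\alpha=p_2-1$ implies the exceptional hypothesis'', which is true (one checks $p_1\mid p_2-1$, $\beta(\mathcal A_{p_2-1})=(u-1)u^{-(p_2-1)/p_1}$, and $\beta(\mathcal A_{p_2})\ne0$ with $p_1\nmid p_2$, so $\beta=p_2=\alpha+1$) but is asserted without proof. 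A minor slip: for $k<p_{12}$ it is not true that only the vertices contribute (take $(p_1,p_2,p_3)=(2,3,4)$ and $k=4$, where the edge with $\lcm(p_1,p_3)=4$ contributes); your displayed formula is only valid for $k<p_2$, which is fortunately all you use.
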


\begin{proof}


We \coGR{distinguish} cases according to the divisibility of
$p_2$ by $p_1$, and according to \coGR{equality} between $p_2$ and $p_3$.
\begin{itemize}
\item 
If $p_1$ does not divide $p_2$ and $p_2<p_3$, then $p_2=\beta$. 
Moreover $\beta \leq \delta$ by Proposition \ref{prop-coeff}. 
We claim that necessarily $\beta\leq\alpha$. 
\coGR{I}f not, the only possible case is $\alpha=\beta-1$ and so $p_1$
divides $\beta-1=p_2-1$. 
Since $p_2+1$ should be an element of $A(f)\cap \mathbb N_{\geq \alpha}$, 
then $p_1$ divides $p_2+1$ also, and so $p_1=2$. 
\coRR{Similarly $p_2+2$ should be an element of $A(f)\cap
\mathbb N_{\geq \alpha}$, and so $p_3=p_2+2$ and thus $(p_1,p_2,p_3)$ has the form $(2,p_2,p_2+2)$ with $p_2$ odd. Finally $p_2+4$ should be an element of $A(f)\cap \mathbb N_{\geq \alpha}$, so $p_2+4 \geq \lcm(2, p_2)=2p_2$. }
As a consequence $(p_1,p_2,p_3)=(2,3,5)$
and thus $h(v)<0$, which has been excluded in
our discussion.


\item If $p_1$ does not divide $p_2$ and $p_2=p_3$, then $p_2=\beta$
  and $\beta 
  \leq \delta$ by Proposition \ref{prop-coeff}. Moreover $\beta \leq \alpha$ unless $p_1$ divides $p_2-1$ and
$\beta(X_{\gamma_1})\neq 0$. In that \coGR{case,} $\beta=\alpha+1\leq
\delta$ and moreover $\beta(\mathcal
A_{\alpha})=u ^{-\frac{\alpha}{p_1}}(u-1)$ by Proposition
\ref{prop-coeff}. This case gives the
exceptional case in the statement of the lemma.

\item If $p_1$ divides $p_2$, assume first that $p_2<p
_3$. Then $p_2=\delta$ if $\beta(X_{\gamma_3})= 0$, and then $\delta \leq
\min \{\alpha, \beta\}$. If however $\beta(X_{\gamma_3})\neq 0$, then $p_2=\alpha \leq \min\{\beta, \delta\}$.

In the latter case, note that $\beta(\mathcal
A_{\alpha})=u ^{-\frac{\alpha}{p_1}-1}(u^2-1-(u-1)\beta(\widehat
X_{\gamma_3}))$ with $\beta(\widehat X_{\gamma_3})$ even (indeed $\beta(\widehat
X_{\gamma_3})$ is equal to the number of real solutions of a real
polynomial of even degree not vanishing at zero, with only simple real
roots because $f$ has isolated singularities). 

\item If $p_1$ divides $p_2=p_3$, we have to distinguish the cases
  where $\alpha$ is infinite or not. If $\alpha \neq \infty$ then $p_2=\alpha \leq \{\beta,
\delta\}$ since $[X_{\gamma_f}]\neq 0$.
Finally if $p_1$ divides $p_2=p_3$ and $\alpha = \infty$, then
$p_2<\beta$ and $J$ is a zero of $\beta(\mathcal A_{p_2})$ 
by Proposition
\ref{prop-coeff}, therefore $p_2=\delta$.
\end{itemize}
\end{proof}

\begin{thm}\label{w3}  Convenient weighted
  homogeneous polynomials which share the same zeta functions have the
  same weights.
\end{thm}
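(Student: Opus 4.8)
The plan is to show that the zeta function $Z_f(t)=\sum_{k\ge1}\beta(\mathcal A_k(f))t^k$ alone determines the weighted degree $d=m_f(v)$ together with the integers $p_i=d/w_i$; since $w_i=d/p_i$, two convenient weighted homogeneous polynomials with the same zeta function will then have the same weights. We assume $f$ is singular at the origin, as is customary when discussing weights. The cases of few variables are easy: for $n=1$ the germ is $x^{p_1}$ and $p_1=d$ is the order of $Z_f$; for $n=2$ a singular convenient weighted homogeneous polynomial has $p_1,p_2\ge 2$, hence $h(v)=d(1-\tfrac1{p_1}-\tfrac1{p_2})\ge 0$, and Remark \ref{2-case} recovers the weights from the order of $Z_f$ together with its leading exponent, via Proposition \ref{prop-posi} and Corollary \ref{corA}. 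So the substance of the theorem is the case $n=3$.

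For $n=3$ I would first read off $p_1$ as the multiplicity of $f$, that is, the order of the power series $Z_f$, and then determine the sign of $h(v)$ using Proposition \ref{signh}: $h(v)>0$ exactly when $e_f>0$; $h(v)=0$ exactly when $\deg\beta(\mathcal A_k(f))\le n-k$ with equality for infinitely many $k$; and $h(v)<0$ otherwise --- conditions all visible from $Z_f$. If $h(v)<0$ then $f$ has only \emph{simple} singularities in the sense of Arnold and its weights are recovered in \cite{simple}, so we may assume $h(v)\ge 0$. In that regime Proposition \ref{prop-posi} yields $e_f=1-\tfrac1{p_1}-\tfrac1{p_2}-\tfrac1{p_3}$ and Corollary \ref{corA} yields $d=m_f(v)$; since $p_1$ and $e_f$ are now known, everything reduces to pinning down $p_2$, after which $p_3$ and hence the weights are forced.

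The heart of the proof is to extract $p_2$ from the first few coefficients of $Z_f$ via the explicit formula of Proposition \ref{prop-coeff}, the guiding principle being that $p_2$ is the first exponent whose contribution to $Z_f$ does not come from the vertex $(p_1,0,0)$ of $\Gamma_+(f)$. I would split on the parity of $p_1$. When $p_1$ is even, the three numbers $\alpha=\min\{l:A(f)\cap\NN_{\ge l}=\NN_{\ge l}\}$, $\beta=\min\{l:\beta(\mathcal A_l)\ne 0,\ p_1\nmid l\}$ and $\delta=\min\{l:-1\text{ or a primitive cube root of unity is a zero of }\beta(\mathcal A_l)\}$ determine $p_2$ outright: $p_2=\beta$ in the exceptional situation $\alpha=\beta-1$, $\beta(\mathcal A_\alpha)=u^{-\frac{\alpha}{p_1}}(u-1)$, and $p_2=\min\{\alpha,\beta,\delta\}$ otherwise. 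When $p_1$ is odd, $X_{\gamma_3}$ is nonempty, so $A(f)$ is cofinite, and $\alpha,\beta$ determine $p_2$ except in the single ambiguous case $p_1\mid\beta-1$, $\beta=\alpha+1$, where a priori $p_2=\beta-1$ or $p_2=p_3=\beta$; there, knowing $p_1$ and $e_f$ (hence $\tfrac1{p_2}+\tfrac1{p_3}$) resolves the ambiguity except for the pair of triples $(3,4,4)$ and $(3,3,6)$, which share $p_1$ and $\tfrac1{p_2}+\tfrac1{p_3}=\tfrac12$; these two are finally separated by $d=\lcm(p_1,p_2,p_3)$, equal to $12$ and $6$ respectively, which Corollary \ref{corA} detects. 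This determines $p_1,p_2,p_3$ and $d$ from $Z_f$, and hence the weights.

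The main obstacle is exactly this last step: the combinatorial bookkeeping underlying Proposition \ref{prop-coeff} together with the parity-based case analysis for $p_2$. The delicate points are that, for $p_1$ even, $\alpha$ may be infinite (only when $\beta(X_{\gamma_f})=0$) and $\beta$ may be infinite (only when $p_1\mid p_2$ and $p_1\mid p_3$), so one cannot use divisibility data alone and must genuinely exploit the zeros of the virtual Poincar\'e polynomials $\beta(\mathcal A_l)$ --- the role of $\delta$, and the fact that $\beta(\widehat X_{\gamma_3})$ is even because $f$ has isolated singularities --- and that a handful of small triples must be isolated and excluded by hand. By contrast, reducing the problem to the determination of $p_2$ is a direct assembly of Proposition \ref{signh}, Proposition \ref{prop-posi} and Corollary \ref{corA}.
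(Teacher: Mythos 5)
Your proposal is correct and follows the paper's argument essentially step for step: reduction to the sign of $h(v)$ via Proposition \ref{signh}, recovery of $p_1$ as the order of the zeta function, of $e_f$ via Proposition \ref{prop-posi} and of $m_f(v)$ via Corollary \ref{corA}, then the parity-split extraction of $p_2$ from Proposition \ref{prop-coeff} with the same exceptional configurations. The one place where you genuinely diverge is the final disambiguation of $(p_1,p_2,p_3)=(3,4,4)$ versus $(3,3,6)$. The paper resolves it by computing $\beta(\mathcal A_4)$ and $\beta(\mathcal A_5)$ explicitly in both cases and checking that at least one of these coefficients must differ (level $4$ suffices unless $\beta(\widehat X_{\gamma_1})=2$, in which case level $5$ does). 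You instead note that the weighted degree $d=m_f(v)=\lcm(p_1,p_2,p_3)$ is already determined by the zeta function through Corollary \ref{corA} --- both triples have $e_f=\frac{1}{6}>0$, so the corollary applies and the limit series $t^d/(1-t^d)$ reads off $d$ --- and $d$ equals $12$ in one case and $6$ in the other. This is valid: the paper itself extracts $m_f(v)$ by Corollary \ref{corA} in the regime $h(v)\ge 0$ and then, somewhat redundantly, does not exploit it at this point. Your route is the cleaner of the two, avoiding the two coefficient computations; what the paper's version buys in exchange is that the two zeta functions are seen to differ already in an explicit low-order coefficient rather than through an asymptotic limit, so it does not lean again on Corollary \ref{corA} (whose proof rests on the leading coefficient being exactly $1$).
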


\begin{proof} 
If $h(v)<0$ we refer to \cite{simple}. 
Otherwise, the discussion in section \ref{reco} 
show\coGR{s} how to recover $p_1$, $p_2$ and
$\frac{1}{p_1}+\frac{1}{p_2}+\frac{1}{p_3}$ 
except in the particular case where $p_1=3$ and
$\frac{1}{p_1}+\frac{1}{p_2}+\frac{1}{p_3}=\frac{5}{6}$. 
Therefore it suffices to distinguish the cases 
$(p_1,p_2,p_3)=(3,4,4)$ and $(3,3,6)$.

A direct computation shows that $\beta(\mathcal A_{4})=u ^{-3}(u-1)^2$
if $(p_1,p_2,p_3)=(3,3,6)$ whereas 
$\beta(\mathcal A_{4})=u ^{-3}(u^2-1-(u-1)\beta(\widehat X_{\gamma_1}))$
if $(p_1,p_2,p_3)=(3,4,4)$, 
so that the spaces of arcs of level $4$ are different, 
except when $\beta(\widehat X_{\gamma_1})=2$. 
However $\beta(\mathcal A_{5})=u ^{-4}(u-1)^2$ 
if $(p_1,p_2,p_3)=(3,3,6)$ 
whereas $\beta(\mathcal A_{5})=u^{-4}(u-1)\beta(\widehat X_{\gamma_1})$ 
if $(p_1,p_2,p_3)=(3,4,4)$,
so at \coGR{level} $5$ the spaces of arcs are different\coGR{.}
\end{proof}


\end{document}